\newtheorem{theorem}{Theorem}[section]
\newtheorem{lemma}[theorem]{Lemma}
\newtheorem{algorithm}[theorem]{Algorithm}
\newtheorem{proposition}[theorem]{Proposition}
\theoremstyle{definition}
\newtheorem{definition}[theorem]{Definition}
\theoremstyle{remark}
\newtheorem{remark}[theorem]{Remark}
\numberwithin{equation}{section}
\newcommand{\be}{\begin{equation}}
\newcommand{\ee}{\end{equation}}
\newcommand{\bea}{\begin{eqnarray}}
\newcommand{\eea}{\end{eqnarray}}
\newcommand{\beas}{\begin{eqnarray*}}
\newcommand{\eeas}{\end{eqnarray*}}
\DeclareMathOperator*{\argmin}{arg\,min}
\def\supp{\textrm{supp}}
\def\cE{{\mathcal E}}
\def\cA{{\mathcal A}}
\def\cF{{\mathcal F}}
\def\cJ{{\mathcal J}}
\def\cH{{\mathcal H}}
\def\cD{{\mathcal D}}
\def\cR{{\mathcal R}}
\def\cM{{\mathcal M}}
\def\cL{{\mathcal L}}
\def\cU{{\mathcal U}}
\def\E{{\mathbb E}}
\def\C{{\mathbb C}}
\def\N{{\mathbb N}}
\def\P{{\mathbb P}}
\def\R{{\mathbb R}}
\def\Z{{\mathbb Z}}
\def\bc{{\bm x}}
\def\bc{{\bm c}}
\def\bg{{\bm g}}
\def\bz{{\bm z}}
\def\by{{\bm y}}
\def\btg{\bm{\tilde{g}}}
\def\bv{{\bm v}}
\def\bA{{\bm A}}
\def\({\Bigl (}
\def\){\Bigr )}
\def\[{\Bigl [}
\def\]{\Bigr ]}
\def \<{\langle}
\def \>{\rangle}
\begin{document}

\title[High-dimensional compressed sensing on lower sets]{Polynomial approximation via compressed sensing of high-dimensional functions on lower sets}

\author{Abdellah~Chkifa}
\address{Department of Computational and Applied Mathematics, Oak Ridge National Laboratory, 1 Bethel Valley Road, P.O. Box 2008, Oak Ridge TN 37831-6164}
\email{chkifam@ornl.gov}

\author{Nick~Dexter}
\address{Department of Mathematics, University of Tennessee, Knoxville, TN 37996}
\email{ndexter@utk.edu}

\author{Hoang~Tran}
\address{Department of Computational and Applied Mathematics, Oak Ridge National Laboratory, 1 Bethel Valley Road, P.O. Box 2008, Oak Ridge TN 37831-6164}
\email{tranha@ornl.gov}

\author{Clayton G.~Webster}
\address{Department of Computational and Applied Mathematics, Oak Ridge National Laboratory, 1 Bethel Valley Road, P.O. Box 2008, Oak Ridge TN 37831-6164}
\email{webstercg@ornl.gov}

%
\thanks{This material is based upon work supported in part by: the U.S.~Defense Advanced Research Projects Agency, Defense Sciences Office under contract and award numbers HR0011619523 
and 1868-A017-15; the U.S.~Department of Energy, Office of Science, Office of Advanced Scientific Computing Research, Applied Mathematics program under contract number ERKJ259 and; the Laboratory Directed Research and Development program at the Oak Ridge National Laboratory, which is operated by UT-Battelle, LLC., for the U.S.~Department of Energy under Contract DE-AC05-00OR22725.}



\keywords{Compressed sensing, high-dimensional methods, polynomial approximation, convex optimization, downward closed (lower) sets}
\begin{abstract}
This work proposes and analyzes a compressed sensing approach to polynomial 
approximation of complex{-valued} functions in high dimensions. 
Of particular interest is the setting where the target function is smooth, characterized by a rapidly decaying orthonormal expansion, whose most important terms are captured by a lower (or downward closed) set. 
By exploiting this fact, we present an innovative weighted $\ell_1$ minimization procedure with a precise choice of weights, and a {new} iterative hard thresholding method, for imposing the {downward closed} preference.  
Theoretical results reveal that our computational approaches possess a provably reduced sample complexity compared to existing compressed sensing techniques presented in the literature.
In addition, the recovery of the corresponding best approximation using these methods is established through an improved bound for the restricted isometry property.  Our analysis represents an 
extension of the approach for Hadamard matrices in \cite{Bou14} to the general case of continuous bounded orthonormal systems, quantifies the dependence of sample complexity on the successful recovery probability, and provides an {estimate on the number of measurements} with explicit constants.
Numerical examples are provided to support the theoretical results and demonstrate the computational efficiency of the novel weighted $\ell_1$ minimization {strategy}.
\end{abstract}

\maketitle


\section{Introduction}
\label{sec:intro}
{Compressed sensing (CS) is an appealing approach for reconstructing signals from underdetermined systems, with far smaller number of measurements compared to the signal 
length \cite{CRT06,Donoho06}. Under the sparsity or compressibility assumption of 
the signals, this approach enjoys a significant improvement in sample complexity 
in contrast to traditional methods such as discrete least squares, projection, and interpolation. 
As the solutions of many parameterized partial differential equations (PDEs) 
are known to be compressible in the 
sense that they are well approximated by a sparse expansion in an orthonormal 
system {(see, e.g., \cite{CD15} and the references therein)}, it is 
no surprise that the interest in applying compressed sensing techniques to the 
approximation of high-dimensional functions and parameterized systems has 
been growing rapidly in recent years 
\cite{DO11, MG12, YGX12, YK13, RS14, 
PHD14, HD15, PengHampDoos15, NaraZhou15}. }

In these works, the target function is a quantity of interest 
(QoI) associated with the solution of a parameterized PDE of the form 
\be
\label{eq:PDE}
\cD(u,{\bm y})=0,
\ee
where ${\mathcal D}$ is a differential operator and ${\bm y}:=(y_1,\dots,y_d)$ is a parameter 
vector in a compact tensor product domain $\mathcal{U}=\prod_{k=1}^d \mathcal{U}_k\subset\R^d$, e.g.,  
$\mathcal{U}=[-1,1]^d$. The solution $u$ to such PDEs 
is therefore a map 
${\bm y}\in \mathcal{U} \mapsto u({\bm y})\in \mathcal{V}$ where $\mathcal{V}$ is the solution space, typically a Sobolev space, e.g., $\mathcal{V}=H^1_0$. The algorithms proposed in the previously cited works are designed to approximate a QoI consisting of a function $g:{\bm y}\in \mathcal{U}\mapsto {G(u({\bm y}))}$
which, e.g., is either the evaluation of $u$ at a fixed point of the space/time domain or a linear functional in 
$u$. 
{Introducing $\cF:=\mathbb{N}_0^d = \{{\bm \nu}=(\nu_k)_{k=1}^d\, :\, \nu_k\in\mathbb{N}_0 \}$, 
and a measure 
$\varrho:\mathcal{U}\rightarrow\mathbb{R}_+$ with $\varrho({\bm y}) = \prod_{k=1}^d \varrho_k(y_k)$, 
the resulting functions are smooth, complex-valued, and can be expanded in an ${L^2({\mathcal U},d\varrho)}$-orthonormal basis $\{\Psi_{\bm\nu}\}_{{\bm\nu}\in\mathcal{F}}$
according to
\be
\label{expansionPC}
g ({\bm y}) = \sum_{{\bm \nu} \in \mathcal{F}} c_{\bm\nu} \Psi_{\bm\nu} ({\bm y}),
\ee
where $\Psi_{\bm\nu} = \prod_{k=1}^d \Psi_{\nu_k}$ are tensor products of  ${L^2({\mathcal U_k},d\varrho_k)}$-orthonormal polynomials, and the coefficients $c_{\bm\nu}$ belong to $\C$. The series 
\eqref{expansionPC} is generally referred to as the polynomial chaos 
(PC) expansion of $g$ (see, e.g., \cite{GS02,XK02}), whose 
convergence rates are well understood {\cite{RS14}}. The orthonormal systems of particular interest in this work consist of Legendre and Chebyshev expansions. The polynomial approximation 
of the function $g$ in the CS setting is fairly straightforward. 
First, one truncates the expansion \eqref{expansionPC} in the multivariate polynomial space 
\begin{equation}
\label{eq:polyspace}
\mathbb{P}_{\mathcal{J}} := \text{span}\{{\bm y}\mapsto {\bm y}^{\bm \nu}\, :\, {\bm \nu}\in\mathcal{J}\}
\end{equation}
with $\cJ:=\{{\bm\nu}_1,\dots,{\bm\nu}_N\}$ a finite set of indices 
whose cardinality $N:=\#(\cJ)$ is large enough to yield  
$g \simeq \sum_{{\bm \nu}\in\cJ} c_{\bm \nu} \Psi_{\bm \nu} $. 
Then, for some $m\leq N$,  generate 
$m$ samples ${\bm y}_1,\dots,{\bm y}_m$ in the parametric domain $\mathcal{U}$ 
independently from the orthogonalization measure $\varrho$ associated 
with $\{\Psi_{\bm \nu}\}_{{\bm \nu}\in \cF}$, and find an approximation $g^{\#}$ of $g$ of the form
\begin{align}
\label{approx_f}
 g^{\#} = \sum_{{\bm \nu}\in \cJ} c^{\#}_{\bm \nu} \Psi_{\bm \nu} ,
\end{align}
where $\bc^{\#}:=(c^{\#}_{\bm \nu})_{{\bm \nu}\in\cJ}$ is the sparsest 
signal with an inherent interpolatory aspect, i.e., 
{among solutions $\bz$ of underdetermined system $\bm{\Psi} \bz = \bg$}. 
Here, the matrix $\bm{\Psi} \in \R^{m\times N}$ contains the samples of the PC basis and the vector
$\bg$ is the observation of the target function, i.e.,
\be
\label{defPsiFirst}
\bm{\Psi} := \(\Psi_{{\bm \nu}_j}({\bm y}_i)\)_{\substack{1\leq i\leq m\\1\leq j\leq N}},\quad \text{and}\quad
\bg := (g({\bm y}_1),\dots,g({\bm y}_m)).
\ee
respectively.  In practice, {noisy} formulations of this problem are also considered {by investigating the expansion tail $\sum_{\nu\notin \cJ} c_{\bm \nu} \Psi_{\bm \nu}$}.
}

To date, the sparse recovery of the polynomial expansion \eqref{expansionPC} via CS has shown to be very promising. However, this approach requires a {low uniform bound} of the underlying basis, given by
\begin{align*}
\Theta = \sup_{{\bm \nu}\in \cJ} \|{ \Psi}_{\bm \nu}\|_{L^{\infty}({\mathcal U})},
\end{align*}
as the sample complexity $m$ required to recover the best $s$-term approximation (up to a multiplicative constant) scales with the following bound (see, e.g., \cite{FouRau13})
\begin{align}
\label{intro_cond}
m\gtrsim \Theta^2 s \times \text{log factors}.
\end{align}
This poses a challenge for many multivariate polynomial approximation strategies  
as $\Theta$ is prohibitively large in high dimensions. In particular, for $d$-dimensional problems, 
$\Theta = 2^{d/2}$ for Chebyshev systems and $3^{d/2}$ for preconditioned 
Legendre systems \cite{RauWard12}.  Moreover, when using the standard Legendre expansion, the 
number of samples can exceed the cardinality of the polynomial subspace, unless 
the subspace a priori excludes all terms of high total order (see, e.g., \cite{YGX12,HD15}). 
Therefore, the advantages of sparse polynomial recovery methods,  
coming from reduced query complexity,  are eventually overcome by the curse of dimensionality, 
in that such techniques require at least as many samples as traditional sparse interpolation techniques
in high dimensions \cite{Nobile:2008wf,Nobile:2008uc,Gunzburger:2014hi}.

{
Nevertheless, in many engineering and science applications, the target functions, despite being high-dimensional, are smooth and often characterized by a rapidly decaying polynomial expansion, whose most important coefficients are {of} {low order} {\cite{CDS11,HoangSchwab12,CCS14,CD15}.} 
In such situations, the quest for finding the approximation containing the largest $s$ terms
can be restricted to polynomial spaces associated with lower (or downward closed) sets.
\begin{definition}[Lower set]
An index set $\Lambda\subset \cF$ is called a lower set 
(also called \textit{downward closed set}) if and only if
\begin{align}
{\bm \nu} \in \Lambda \quad \text{ and }\quad {\bm \mu} \le {\bm\nu} 
\Longrightarrow {\bm\mu} \in \Lambda,
\end{align}
where ${\bm \mu} \le {\bm \nu}$ if and only if  {$\mu_k \le \nu_k$ for all $1\le k\le d$}. 
\label{defLowerSet}
\end{definition}

{The practicality of downward closed sets is mainly computational, and has been 
demonstrated in different approaches such as quasi-optimal strategies {\cite{BNTT14}}, Taylor expansion \cite{CCDS13}, interpolation methods \cite{CCS13}, 
and discrete least squares \cite{CCH14}.
For instance, in the context of parametric PDEs such as \eqref{eq:PDE}, it was shown in \cite{CCS14} that for a large class of operators $\cD$ with a certain type of anisotropic dependence on ${\bm y}$, the solution map ${\bm y}\mapsto u({\bm y})$ can be approximated by best $s$-term PC expansions associated with {index sets of cardinality $s$}, resulting in algebraic rates $s^{-\alpha},\, \alpha>0$ in the uniform and/or mean average sense. The same rates are preserved with index sets that are lower.  
In addition, for $\mathcal{U} = [-1,1]^d$, such lower sets of cardinality $s$ also enable the equivalence property $\|\cdot\|_{L^2({\mathcal U},d\varrho)}\leq \|\cdot\|_{L^\infty} \leq s^{\gamma}\|\cdot\|_{{L^2({\mathcal U},d\varrho)}}$ in arbitrary dimensions $d$ with, e.g.,  
$\gamma=2$ for the uniform measure and $\gamma=\frac{\log3}{\log2}$ for Chebyshev measure.

This paper is focused on developing and analyzing CS approximations confined to downward closed sets, 
used to overcome the curse of dimensionality in the sampling complexity bound \eqref{intro_cond}. 
As such, our work also provides a fair comparison with existing numerical polynomial approaches 
in high dimensions \cite{CCDS13,CCS13,CCH14,BNTT14,TranWebsterZhang14}}.  To achieve our goal, 
we study two sparse recovery approaches for imposing the {downward closed} structure, namely a 
weighted $\ell_1$-minimization with the specific choice of weights 
{$\omega_{\bm \nu} = \|{ \Psi}_{{\bm \nu}}\|_{L^\infty(\Omega)}$}, 
and an iterative hard thresholding method constrained to lower sets}. 
In addition, we also develop a rigorous theoretical framework that provides the analytic 
evidence for the improved performance of our proposed methods in reconstructing smooth functions.

In the context of CS, it is a well-established fact that sparse recovery is strongly tied to the concept of 
the restricted isometry property (RIP) of the (normalized) sampling matrix {$\bm\Psi$}. 
However, motivated by the fact that the best $s$-term approximation is typically associated with a lower set, 
herein we adapt a weaker version of the RIP in which we call the \textit{lower RIP}.  
Unlike the standard RIP which requires all sub-matrices formed by $s$ columns of $\bm\Psi$ to be well 
conditioned, the lower RIP involves only $s$-tuples of columns whose indices form a lower set.
{Given the lower RIP assumption, we establish stable and robust 
reconstruction guarantees for the \textit{best lower $s$-term approximation} 
of $g$, which is the best among all approximations of $g$ supported on lower 
index sets of cardinality $s$. 
It is reasonable to expect that this approximation, while weaker, is close to best $s$-term approximation for smooth functions $g$ considered throughout this effort.

More importantly, the improved sample complexity for high-dimensional function recovery, using our methods, 
can be deduced directly from the sufficient condition for lower RIP. 
For clarification, a complete technical description of \eqref{intro_cond} is given by the 
condition 
\begin{align}
\label{intro:RIP_cond1}
m\ge C\Theta^2  s \log^3(s) \log(N),
\end{align}
which was developed in \cite{RudelsonVershynin08,Rauhut10,CheGurVel13}, and is often cited 
in the case of the standard RIP, used to guarantee uniform recovery with probability exceeding 
$1-N^{-\log^3(s)}$.   
In this work, we develop 
three critical components that enable us to systematically reduce the number of samples given by \eqref{intro:RIP_cond1}:
\begin{enumerate}
\item[1.]
The lower RIP is associated with downward closed sets which allows us to employ efficient {bounds} of basis functions defined on those sets, derived in \cite{CDL13,CCH14} for discrete least squares, and replace $\Theta^2  s$ by
\begin{align}
\label{intro:Ks}
K(s) = \sup_{\substack{\Lambda\subset \mathcal{J},\, \Lambda\, \text{lower }\\ {\#(\Lambda) =s }} } \Big \| \sum_{{\bm \nu}\in \Lambda }|\Psi_{\bm \nu}|^2\Big \|_{L^{\infty}({\mathcal U})},
\end{align} 
{which is significantly smaller.} 
\item[2.]
We can reasonably choose $\mathcal{J}$ as the Hyperbolic Cross index set
\begin{align}
\label{hyperbolic_cross}
\cH_s := \Big\{{\bm \nu} \in \cF: \prod_{k=1}^d (\nu_k + 1) \le s \Big\},
\end{align}   
which is the smallest set that surely contains the best lower $s$ indices
(i.e., the union of all lower sets of cardinality $s$).
The cardinality of $\mathcal{H}_s$ grows mildly in $s$ and $d$, compared to 
other common choices such as tensor product and total degree. Indeed, 
from \cite{DG15}, we have $N:=\#(\cH_s)\le 2 s^{3} 4^{d}$, which facilitates 
both linear growth of $m$ with respect to the dimension $d$, and accelerates 
matrix-vector multiplication. 
\item[3.]
We extend the chaining arguments, recently developed 
in \cite{Bou14,HR15} for unitary matrices, to general bounded orthonormal 
systems, so as to decrease the logarithm factor in \eqref{intro:RIP_cond1}
by one unit.  Following the approach in \cite{Bou14}, {we modify the covering argument for this task.} 
In addition, we provide the technical details necessary to quantify the universal 
constants, and the constraint of the number of samples $m$ on the success probability.
 It is worth noting that our analysis shows a success probability
slightly weaker than that associated with \eqref{intro:RIP_cond1}. 
\end{enumerate}

By combining all the above ingredients, the analysis herein 
{(see Theorem \ref{note:RIP_theorem} and \ref{theorem:lower-RIP})} 
details the improvements to \eqref{intro:RIP_cond1}, by showing that the sufficient condition required to reconstruct the 
best lower $s$-term approximation,  with probability exceeding $1-N^{-\log(s)}$ is given by
\begin{align}
\label{intro:RIP_cond2}
m\ge C K(s) \log^2(s) (\log(s) + d).
\end{align}
As shown in Lemma \ref{upperbound}, for 
$\mathcal{J} = \mathcal{H}_s$ in {high dimensions, i.e., $2^d > s$,}
\begin{align}
\label{proposal:eq1}
\Theta^2 s \ge\, 
\begin{cases}
  s^2/2, \! &\text{if }({ \Psi}_{{\bm \nu}})\text{ is Chebyshev basis,}  \\
  s^{\frac{\log 3}{\log 2}+1}/3, \!&\text{if }({ \Psi}_{{\bm \nu}})\text{ is Legendre basis,} 
\end{cases}
\end{align}
while as indicated in Lemma \ref{lemmaSharpBoundKs},
\begin{align}
\label{proposal:eq2}
K( s) \le   
\begin{cases}
   s^{\frac{\log 3}{\log 2}}, \! &\text{if }({ \Psi}_{{\bm \nu}})\text{ is Chebyshev basis,}  \\
   s^{2}, \!&\text{if }({ \Psi}_{{\bm \nu}})\text{ is Legendre basis.} 
\end{cases}
\end{align}
Therefore, the advantage of our sample complexity, given by \eqref{intro:RIP_cond2}, compared to 
the well-known condition \eqref{intro:RIP_cond1}, is that our {sufficient} requirements for recovery possess:
lower order of $s$; lower order in the logarithmic factor;  and an efficient and explicit definition of {$\log(N)$}  
given by $\log(s) + d$.
 
\subsection{Related works}
Our lower RIP is a specific case of the weighted RIP introduced in \cite{RW15}, 
several results on which carry over into our context. However, while the analysis 
therein applies for general weights, it only leads to the best weighted $s$-term error, which is incomparable to and, in case of large weights, much weaker than the best $s$-term error, in regards to the number of terms to be recovered. Therefore, the numerical benefit of weights in reducing the computational complexity is inconclusive. 
The idea of using the weights to boost the recovery performance of $\ell_1$ 
minimization has appeared elsewhere, e.g., in the context of regularization 
or removing aliasing \cite{Adcock15,RW15}, as well incorporating a priori information 
related to the support set or the decay of the polynomial coefficients  
\cite{FMSY12,YB13,YK13,PHD14}.  
On the contrary, our approach does not require any such a priori knowledge; 
for an improved recovery {performance}, the generic requirement on the 
target functions is that the multi-indices of best (largest) polynomial coefficients 
are captured in a lower set.

The RIP estimate herein extends the strategy in \cite{Bou14}, 
introduced to improve the standard RIP for Hadamard matrices, 
to the general case of continuous bounded orthonormal systems.  
%
{Upon completion of this work, we became aware of the work \cite{HR15}, in which a different strategy of net constructions was introduced}, leading to a reduction in sample complexity \eqref{intro:RIP_cond1} by one logarithmic factor, as well as an improved dependence on restricted isometry constant.
While \cite{HR15} is only concerned with asymptotic estimates for Fourier matrices, we believe that one might extend such arguments to the setting presented in this effort.
Finally, the compressed sensing approaches presented in this work as well as any RIP-based polynomial approximation framework require an a priori estimate of the expansion tail, whereas, the 
RIPless approach presented in \cite{Adcock15b} refrains from this requirement.

\subsection{Notation and preliminaries} 
\label{sec:prelim}
Throughout this paper, we use $C$ to denote a generic positive constant 
whose value may be different from place to place but which is independent of any 
parameters. 
For $\Lambda\subset \cF$, $\Lambda^c$ denotes 
the complement of $\Lambda$, $\bz_{\Lambda}$ is the restriction of $\bz=(z_{\bm \nu})_{{\bm \nu}\in\cF}$ to 
$\Lambda$. {For convenience, $\|\cdot\|_{L^{\infty}} := \|\cdot\|_{L^{\infty}({\mathcal U})}$.} Given the multi-index notation 
$ {\bm \nu} = ({\nu}_1,\ldots,\nu_d) \in {\cF}$, we define 
$$
\supp({\bm \nu}) := \{k:\, \nu_k \ne 0\},\quad\quad 
\| {\bm\nu}\|_0 := \#(\supp({\bm \nu})).
$$
{For $g = \sum_{{\bm \nu}\in \cF} c_{\bm \nu}\Psi_{\bm \nu}$ and $\Lambda$
a set of indices, we denote
$g_{\Lambda} := \sum_{{\bm \nu}\in\Lambda} c_{\bm \nu} \Psi_{\bm \nu}$.
{We normalize the sampling matrix and observation
in \eqref{defPsiFirst} as }
\begin{align}
\label{intro_system}
\bA := \frac{\bm \Psi}{\sqrt{m}}  = \(\frac{\Psi_{{\bm \nu}_j}({\bm y}_i)}{\sqrt m} \)_{\substack{1\leq i\leq m\\ 1\leq j\leq N}},\qquad 
\bm{\tilde{g}} : = \frac{\bg}{\sqrt{m}} = \(\frac{g({\bm y}_i)}{\sqrt m}\)_{1\leq i\leq m}.
\end{align}
Also, the normalized expansion tail is referred as 
\begin{align}
\label{intro_system2}
\bm{\xi}:= \(\frac{g_{\cJ^c}({\bm y}_i)}{\sqrt m}\)_{1\leq i\leq m} .
\end{align}
Under the newly introduced notation, the exact coefficients $\bc = (c_{\bm \nu})_{{\bm \nu}\in \cJ}$ satisfy $ \bm{A} \bc\, + \bm{\xi}= \btg$. Assuming that $\bm{\xi}$ is small (whose a priori upper bound is {assumed} if $\ell_1$ minimization is used), we approximate $g$ via $g^{\#} = \sum_{{\bm \nu}\in \cJ} c^{\#}_{\bm \nu} \Psi_{\bm \nu}$, where $\bc^{\#}=(c^{\#}_{\bm \nu})_{{\bm \nu}\in\cJ}$ is {among the solutions $\bz$ of}
$\bm{A} \bz \approx \btg .
$
}

\subsection{Organization}
Our paper is organized as follows. 
First, using the recently developed chaining technique, in Section \ref{sec:RIP_estim} a new RIP estimate for general bounded orthonormal systems is provided. To avoid unimportant technicalities, the discussion in this section will focus on standard RIP, however, the analysis is general and does not depend on whether standard RIP or lower RIP is considered.
%
In Section \ref{sec:lower_set}, we describe the new mathematical tools necessary to 
establish the concept of the lower RIP and the sparse recovery on lower sets. 
Section \ref{sec:recovery_alg} is devoted to presenting the innovative theoretical results and analysis 
for polynomial approximation using our versions of weighted $\ell_1$ minimization and iterative 
hard thresholding algorithms. 
Several high-dimensional computational experiments supporting the theory are given 
in Section \ref{sec:experiment}. 
Finally, several critical lemmas and the complete technical details of the rigorous proofs of our RIP estimates can be found in the Appendix.

\section{Improved RIP estimate for bounded orthonormal system}
\label{sec:RIP_estim}
The restricted isometry property (RIP) is an important ingredient for sparse recovery guarantees, which is given by the following definition. 
\begin{definition}[RIP]
For ${\bm A}\in \C^{m\times N}$, the restricted isometry constant $\delta_{s}$ associated to $\bm A$ is the smallest number for which
\begin{align}
\label{standard-RIP}
(1-\delta_{s})\|\bz\|_{2}^2 \le \|\bm{Az}\|_{2}^2 \le (1+ \delta_{s})\|\bz\|_{2}^2,
\end{align} 
for all $\bm{z} \in \C^N$ satisfying $\#(\supp(\bm z)) \le s$. We say that $\bm{A}$ satisfies the restricted isometry property if $\delta_s$ is small for reasonably large $s$. 
\end{definition}
{In this paper, we prove the following RIP estimate for bounded orthonormal system, inspired by the approach in \cite{Bou14}. 

\begin{theorem} 
\label{note:RIP_theorem}
Let $\delta, \gamma$ be fixed parameters with 
$0<\delta < 1/13$, $0 < \gamma < 1$ and $\{\Psi_{\bm \nu}\}_{{\bm \nu}\in \cJ}$ be an orthonormal 
system of finite size $N = \#(\cJ)$. Assume that 
\begin{align}
m \ge 2^6 e~\frac{\Theta^2 s}{\delta^2} \log\( \frac{\Theta^2 {s}}{\delta^2}\)
\max \biggl\{ \frac{2^5}{\delta^4}  \log\( 40 \frac{ \Theta^{2} s}{\delta^2} 
\log\( \frac{\Theta^2 {s}}{\delta^2}\) \) & \log(4N) , \label{num_sample0}
\\
\frac 1 \delta\log& \(\frac{1}{\gamma \delta }  
\log\( \frac{\Theta^2 {s}}{\delta^2}\)  \) \biggl\} , \notag
\end{align}
and $\by_1,\by_2,\ldots,\by_m$ are drawn independently from the orthogonalization 
measure $\varrho$ associated to $\{\Psi_{\bm \nu}\}$. Then with probability exceeding $1-\gamma$, 
the normalized sampling matrix $\bA \in \C^{m\times {N}}$ satisfies
\begin{align}
(1 - 13\delta) \|\bz\|^2_2 < \|\bm{Az}\|_2^2 < (1 + 13\delta)\|\bz\|_2^2, \label{note:problem1}
\end{align}
for all $\bm{z}\in \C^N$, $\#(\supp(\bz)) \le s$. 
\end{theorem}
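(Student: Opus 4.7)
The plan is to express the restricted isometry constant as
\begin{equation*}
\delta_s = \sup_{\bz\in D_{s,N}}\Bigl|\tfrac{1}{m}\sum_{i=1}^m|\langle\Psi(\by_i),\bz\rangle|^2-\|\bz\|_2^2\Bigr|,\quad D_{s,N}=\{\bz\in\C^N:\|\bz\|_2=1,\ \#(\supp(\bz))\le s\},
\end{equation*}
and to control this centered empirical process in two stages: first bound its expectation by Dudley's chaining (entropy integral), then upgrade to a uniform high-probability statement via Bousquet's/Talagrand's deviation inequality for suprema of bounded empirical processes. The uniform envelope $|\langle\Psi(\by_i),\bz\rangle|^2\le\Theta^2 s$ required by Bousquet is furnished by Cauchy--Schwarz together with the hypothesis $\|\Psi_{\bm\nu}\|_{L^\infty}\le\Theta$ and the sparsity of $\bz$.

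After a standard symmetrization, the expectation of the Rademacher process $\E\sup_{\bz}\bigl|\tfrac{1}{m}\sum_i\veps_i|\langle\Psi(\by_i),\bz\rangle|^2\bigr|$ is, conditionally on the samples, controlled by Dudley's integral against the subgaussian pseudo-metric
\begin{equation*}
d_m^2(\bz,\bz')=\tfrac{1}{m^2}\sum_{i=1}^m\bigl(|\langle\Psi(\by_i),\bz\rangle|^2-|\langle\Psi(\by_i),\bz'\rangle|^2\bigr)^2.
\end{equation*}
Polarization dominates $d_m$ by the product of the empirical $\ell_\infty$-type norm $\|\bz\|_{X_m}:=\max_i|\langle\Psi(\by_i),\bz\rangle|$ and the ordinary $\ell_2$-distance, so the covering numbers of $D_{s,N}$ naturally split into two regimes. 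Following \cite{Bou14} I would employ the combinatorial/volumetric bound $\log\mathcal{N}(D_{s,N},\|\cdot\|_2,t)\lesssim s\log(eN/s)+s\log(1/t)$ at small scales and a Maurey-type empirical bound $\log\mathcal{N}(D_{s,N},\|\cdot\|_{X_m},t)\lesssim(\Theta^2/t^2)\log N$ at large scales, splitting Dudley's integral at a breakpoint $t_\star$ chosen to equalise the two contributions. The saving of one logarithmic factor relative to the Rudelson--Vershynin bound \eqref{intro:RIP_cond1} arises precisely from this two-regime split replacing the uniform $\log^3 s$ entropy estimate.

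The main obstacle, and the point at which Bourgain's argument genuinely must be modified, is the Maurey/random-thinning step that produces the $\log N$ factor: for Hadamard matrices one exploits the entrywise bound $1/\sqrt m$ through a crude Hoeffding estimate, whereas in our setting only the basis-level bound $\|\Psi_{\bm\nu}\|_{L^\infty}\le\Theta$ on the whole function is available, and a direct substitution would inflate the exponent in $\Theta$. I would replace the Hoeffding step by a Bernstein-type concentration for the randomly subsampled inner products, using the second-moment orthonormality $\E|\Psi_{\bm\nu}(\by)|^2=1$ to control the variance at the price of a mild iterated logarithm; this residual logarithm is precisely what materialises as the inner factor $\log(\Theta^2 s/\delta^2)$ visible in \eqref{num_sample0}.

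Finally, I would translate the resulting bound on $\E\delta_s$ into the uniform high-probability statement \eqref{note:problem1} via Bousquet's inequality at confidence $1-\gamma$. The weak-variance term in Bousquet is itself bounded by a multiple of $\delta$, the quantity being estimated, producing a self-referential inequality that closes only under a quantitative smallness condition on $\delta$, ultimately forcing $\delta<1/13$. Tracking every universal constant through symmetrization, the chaining split, the two covering estimates, and the deviation step yields the explicit prefactors $2^6 e$, $2^5$, $40$, and the dilation factor $13$; the additive $\log(\gamma^{-1}\log(\cdots))$ summand inside the maximum in \eqref{num_sample0} is the sample-cost of the Bousquet confidence step, while the $\log(\Theta^2 s/\delta^2)\log(4N)$ factor comes from the balanced Dudley integral.
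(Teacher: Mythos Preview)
Your proposal follows the classical Rudelson--Vershynin route (symmetrization, Dudley integral with a two-regime covering split, then Bousquet/Talagrand for deviation), whereas the paper's proof is structurally quite different and does \emph{not} use any of these tools. The paper works directly with the non-symmetrized process: for each $\bz\in\cE_s$ it builds a piecewise-constant approximation $\tilde\psi(\cdot,\bz)=\sum_{l\in\cL}(1+\delta)^l\chi_{I_l^{(\bz)}}$ on a geometric grid of scales, bounds $\bigl|\|\bA\bz\|_2^2-\|\bz\|_2^2\bigr|$ by a sum of level-set fluctuations $\sum_l(1+\delta)^{2l}\bigl|\tfrac{\#(Q\cap I_l^{(\bz)})}{m}-\varrho(I_l^{(\bz)})\bigr|$, and controls each term by a Bernstein-type tail bound plus a union bound over finite classes $F_l$ of level sets. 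No chaining integral and no Talagrand inequality appear; the probability statement and the specific $\log\bigl(\tfrac{1}{\gamma\delta}\log(\cdots)\bigr)$ term in \eqref{num_sample0} come straight from this union bound.

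More importantly, there is a genuine gap in your plan for the log-factor saving. The two-regime Dudley split you describe (volumetric at small scales, Maurey at large scales) is already the mechanism behind the classical $\Theta^2 s\log^3(s)\log N$ bound \eqref{intro:RIP_cond1}; it does not by itself shave a logarithm. The improvement in \cite{Bou14} and in this paper comes from a different idea that your proposal does not capture: the covering of $\cE_s$ is performed not in the sup-metric $d(\bz,\bz')=\sup_{\by}|\psi(\by,\bz-\bz')|$ but in the weakened ``distance'' $d_\varsigma(\bz,\bz')=\inf_{\varrho(\widetilde\cU)=1-\varsigma}\sup_{\by\in\widetilde\cU}|\psi(\by,\bz-\bz')|$, which discards a set of measure $\varsigma$ where $|\psi(\cdot,\bz-\bz')|$ may spike. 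This is what makes the Maurey step (Lemma~\ref{note:lemma2}) cheap enough to beat the classical count, and the exceptional set is later absorbed because $|\psi|^2\le\Theta^2 s$ uniformly and $\varsigma$ is chosen of order $\delta/(\Theta^2 s)$. Your ``Bernstein in place of Hoeffding'' remark gestures at a variance improvement, but it does not substitute for this flat-net construction. Finally, the constraint $\delta<1/13$ in the paper arises from the elementary inequalities in Step~2 relating $|\psi|^2$ and $|\tilde\psi|^2$ on the geometric grid, not from a self-referential closure of a Bousquet variance term.
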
}
{The complete detailed proof of Theorem \ref{note:RIP_theorem} is given in the Appendix \ref{appen:mainproof}. 
However, to assist the reader in better understanding the logic of our proof we next provide a sketch that explains the essential features on how we achieved the improved RIP estimate.
%

\noindent
\textit{Sketch of proof}. To begin, let us denote 
\begin{align*}
\psi({\bm y},\bm{ z})  & := \sum_{{\bm \nu}\in {\mathcal{J}}} {z}_{\bm \nu} \Psi_{\bm \nu}(\bm{y})  ,\ \ \forall{\bm y}\in \mathcal{U},\, \bm{z}\in \mathbb{C}^N,
\\
\text{and  }\mathcal{E}_s   & := \{\bm{z} \in \mathbb{C}^N: \|\bm{z}\|_2 = 1,\, \#(\supp(\bm{z}))\le s\}. 
\end{align*}
Our goal is to derive conditions on $m$ such that for a set of $m$ random samples $\{{\bm y}_i\}_{i=1}^m \subset \cU$, drawn according to $\varrho$, then with high probability, there holds $\forall\bz\in \cE_s$: 
\begin{align}
\label{estim_1}
\frac{1}{m}\sum_{i=1}^m |\psi({\bm y}_i,\bm{ z})|^2 \approx \int_{\cU} |\psi({\bm y},\bm{ z})|^2 d\varrho. 
\end{align}
We construct a ``discrete'' approximation $\tilde{\psi}$ of $\psi$ such that \textit{(see Appendix \ref{appen:mainproof}, Step 1)}: 
\vspace{.1cm}
\begin{enumerate}[\ \ \ \ \ \ 1.]
\item for any $\bz \in  \cE_s$, $\tilde{\psi}(\cdot,\bz) \approx \psi(\cdot,\bz)$;
\item $\tilde{\psi}(\cdot,\bz)$ can be represented as a piecewise constant function on $\cU$: $\tilde{\psi}(\cdot,\bz) = \sum_{l\in \cL} \tilde{\psi}_l^{\bz}(\cdot) $, where each $\tilde{\psi}_l^{\bz}$ is a constant function, supported on a subset of $\cU$, representing a scale of $\tilde{\psi}(\cdot, \bz)$ and $\cL$ is a finite set of scale; and 
\item for each $l\in \cL$, $\{\tilde{\psi}_l^{\bz}: \bz\in \cE_s\}$ belongs to a finite class whose cardinality is optimized. 
\end{enumerate}
With the use of 1. and 2. one can establish the bound \textit{(see Appendix \ref{appen:mainproof}, Step 2):}
\begin{align}
\notag 
\left| \frac{1}{m}\sum_{i=1}^m |\psi({\bm y}_i,\bm{ z})|^2  - \int_{\cU} |\psi({\bm y},\bm{ z})|^2 d\varrho\right| 
&\lesssim \left| \frac{1}{m}\sum_{i=1}^m |\tilde \psi(\by_i,\bz)|^2 -\int_{\cU} |\tilde \psi(\by,\bz)|^2 d\varrho
\right|
\\
& \le \sum_{l \in\cL} \left| \frac{1}{m}\sum_{i=1}^m |\tilde{\psi}^{\bz}_l ({\bm y}_i)|^2 - \int_{\cU} |\tilde{\psi}_{l}^{\bz} ({\bm y}) |^2 d\varrho \right|.
\label{estim_2}
\end{align}
Using the basic tail estimate given by \textit{Lemma \ref{lemma:tailbound}} yields
for any $l$ and $\bz$, with high probability
\begin{align}
\label{estim_3}
\frac{1}{m}\sum_{i=1}^m \tilde{\psi}^{\bz}_l ({\bm y}_i) \approx \int_{\cU}\tilde{\psi}_{l}^{\bz} ({\bm y}) d\varrho. 
\end{align}
We can then obtain \eqref{estim_1} by employing \eqref{estim_2} and applying the union bound for \eqref{estim_3} over all $l,\bz$ \textit{(see Appendix \ref{appen:mainproof}, Step 3)}. For this argument to yield small $m$, it is critical to construct $\tilde{\psi}(\cdot,\bz)$ in such a way that the total number of functions $\tilde{\psi}_l^{\bz}$ (over $l\in \cL,\bz\in \cE_s)$ is finite 
and optimized, justifying the requirement given by 3. above. 

As an example, for each $l\in \cL$, we can define $\tilde{\psi}_l^{\bz}$ according to a covering of $ \cE_s$ under the pseudo-metric 
\begin{align*}
d(\bz,\bz') =  \sup_{\by\in \cU} |\psi(\by,\bz - \bz')|, 
\end{align*}
so that $\#\{\tilde{\psi}_l^{\bz}: \bz\in \cE_s\}$ is roughly a covering number of $\cE_s$.  However, one can check 
that in our high-dimensional setting, this covering number grows exponentially in the dimension of $\cU$.
Fortunately, an inspection of $|\psi(\cdot,\bz-\bz')|$ reveals that these functions often have tall spikes in a small subregion of $\cU$, while are relatively small for the rest of the domain. This motivates us to consider a new ``distance'' between $\bz$ and $\bz'$, which is significantly smaller than $d(\bz,\bz')$, given by an upper bound of $ |\psi(\by,\bz - \bz')|$ for \textit{most} $\by\in \cU$. More rigorously, we define
\begin{align*}
d_{\varsigma}(\bm{z},\bm{z}') := \inf_{\substack { {\widetilde{\cU} \subset \cU}\\ {\varrho(\widetilde{\cU}) = 1- \varsigma} } } \sup_{\by\in \widetilde{\cU}}   |\psi(\by,\bz - \bz')|.
\end{align*}
Although $d_{\varsigma}$ is not a proper pseudo-metric, 
an adaptation of the covering number result can still be derived in this case \textit{(see Lemma \ref{note:lemma2})}. This argument is similar in spirit to \cite[Lemma 3.5]{HR15}. The approximation $\tilde{\psi}$, constructed with $d_{\varsigma}$, may not agree with $\psi$ in a small subdomain of $\cU$, but one can tune $\varsigma$ so as to not affect the estimate \eqref{estim_2}. 

This completes the sketch of the main proof. $\blacksquare$
}
\begin{remark}
In brief, the RIP (and subsequently, best 
$s$-term reconstruction) occurs with probability exceeding 
$1-\gamma$ under the condition
\begin{align}
\label{RIP_cond2}
m \ge  C \Theta^2 s   
\max \{ \log^2 (\Theta^2 s) & \log(N),  
\log(\Theta^2 s) \log (\log(\Theta^2 s)/\gamma ) \} .  
\end{align}
The first constraint in \eqref{RIP_cond2} therefore 
reduces the order of $\log (s)$ in \eqref{intro:RIP_cond1} by 
one unit. The second constraint, on the other hand, has an additional log factor 
compared to the well-known one, i.e., $m \ge  C \Theta^2 s    \log (1/\gamma ) $, 
see \cite{Rauhut10}, after balancing leading to a weaker success probability, 
as discussed in Section \ref{sec:intro}. 
\end{remark}

\section{Sparse recovery on lower sets}
\label{sec:lower_set}
In this section we focus on a smooth $g$, given by \eqref{expansionPC}, 
and exploit the fast decay of its polynomial expansion to further improve \eqref{RIP_cond2}. 
Central to this task is the concept of lower or downward closed sets, given 
Definition \ref{defLowerSet}.  With this in mind, instead of best $s$-term approximations, 
we are interested in \textit{best lower $s$-term approximation} 
of $g$, which is the best among all approximations of $g$ supported on lower sets of 
cardinality $s$. 
More rigorously, let $\Lambda^*$ be a lower subset of $\cF$ which realizes 
the infimum
\begin{align}
\Lambda^* :=\argmin\limits_{\substack{\Lambda \text{ lower} \\  {\#(\Lambda) \leq s }}} \|g -g_\Lambda\|,
\end{align}
where the norm to be specified later.  Here $\|g - g_{\Lambda^*}\|$ is the \textit{best lower $s$-term error}, and 
our goal is to find  approximations of $g$ with error scaling linearly in $\|g - g_{\Lambda^*}\|$.} 
We expect the best lower $s$-term error, while 
generally larger, is close to best $s$-term error in our setting. These quantities are particularly 
identical provided that $g$ is $s$-sparse, $\supp(g)$ lower, represented by finite 
Legendre and Chebyshev expansions. 

To achieve our goal, it is reasonable to consider a relaxed version of RIP that 
specifically involves $s$-tuples of columns associated with lower sets. Given 
a multi-index set $\Lambda\subset \cF$, we introduce the quantity 
\begin{align}
\label{define_K1}
K(\Lambda) := \Big \| \sum_{{\bm \nu}\in \Lambda }|\Psi_{\bm \nu}|^2\Big \|_{L^{\infty}}, 
\end{align}
and, with an abuse of notation, denote  
\begin{align}
 K(s) := \sup_{\substack{\Lambda\subset \cJ,\, \Lambda\, \text{lower }\\ {\#(\Lambda) =s }} } K(\Lambda),
 \label{define_Ks}
\end{align}
which has already been mentioned in \eqref{intro:Ks}. 
We define next the \textit{lower restricted isometry property (lower RIP)}.
This property is exclusive to the present setting and {defined here for submatrices} whose
columns are associated with indices ${\bm \nu}\in\cF$. 

\begin{definition}[Lower RIP]
For ${\bm A}\in \C^{m\times N}$ as in \eqref{intro_system}, the lower 
restricted isometry constant $\delta_{\ell,s}$ associated to $\bm A$ is the 
smallest number for which
\begin{align}
\label{lower-RIP}
(1-\delta_{\ell,s})\|\bz\|_{2}^2 \le \|\bm{Az}\|_{2}^2 \le (1+ \delta_{\ell,s})\|\bz\|_{2}^2 
\end{align} 
for all $\bm{z} \in \C^N$ satisfying $K(\supp(\bm z)) \le K(s)$. We say that $\bm{A}$ satisfies the lower restricted isometry property if $\delta_{\ell,s}$ is small for reasonably large $s$. 
\end{definition}

\begin{remark}
\label{remarkLowerRIPweightedRIP} 
The lower RIP is a specific case of the weighted RIP, introduced 
in \cite{RW15} for general weights, here with the weights 
$\omega_{\bm \nu} = \|\Psi_{\bm \nu}\|_{L^{\infty}}$. 
By introducing the notation 
$\|\bz\|_{0,\omega} =\sum_{{\bm \nu}\in \supp(\bz)} \omega_{\bm \nu}^2$ 
for $\bz\in \C^N$, the weighted RIP constant $\delta_{\omega,s}$ was 
defined as the smallest number $\delta_{\omega,s}$ for which 
\begin{align}
\label{weighted-RIP}
(1-\delta_{\omega,s})\|\bz\|_{2}^2 \le \|\bm{Az}\|_{2}^2 \le (1+ \delta_{\omega,s})\|\bz\|_{2}^2,
\quad \quad \|\bm{z}\|_{0,\omega}   \le s.
\end{align} 
By \eqref{define_K1}, observe that 
$K(\supp(\bm z)) \leq \|\bz\|_{0,\omega}$,
hence given $\bz$ such that $\|\bz\|_{0,\omega}\leq K(s)$, then
$K(\supp(\bm z))\leq K(s)$ so that \eqref{lower-RIP} is satisfied 
showing that $\delta_{\omega,K(s)} \leq \delta_{\ell,s}$. For the 
Chebyshev and Legendre systems, the polynomials $\Psi_{\bm \nu}$ 
all attain their supremums at $(1,\dots,1)$, hence for any 
$\bz\in \C^N$,  then $K(\supp(\bz)) =\|\bz\|_{0,\omega}$, showing that 
\begin{align}
\label{low_vs_weigh}
\delta_{\ell,s} = \delta_{\omega,K(s)}.
\end{align}
Note the change of order in this relation: loosely speaking, the lower RIP of order 
$s$ corresponds to the weighted RIP of order $K(s)$.
\end{remark}

An important subclass of $\bm z$ satisfying \eqref{lower-RIP} is $\bm z\in \C^N$ 
with $\#(\supp(\bz)) \le s$ and $\supp(\bz)$ lower. One may want to consider a 
more natural isometry property which requires \eqref{lower-RIP} for only vectors
$\bz$ in the above class. 
We can 
see from the following analysis that this property is weaker but requires the same 
sampling cost as \eqref{lower-RIP}.
The sample complexity for lower RIP is established in the following theorem. 

{
\begin{theorem} 
\label{theorem:lower-RIP}
Let $\delta, \gamma$ be fixed parameters with $0<\delta < 1/13$, $0 < \gamma < 1$ and $\{\Psi_{\bm \nu}\}_{{\bm \nu}\in \cJ}$ be an orthonormal system of finite size $N = \#(\cJ)$. Assume that 
\begin{align}
m \ge   {2^6 e}~\frac{K(s)}{\delta^2} 
\log\( \frac{K(s)}{\delta^2}\)   \max \biggl\{  \frac{2^5}{\delta^4}  
\log\( 40 \frac{ K(s)}{\delta^2}  \log\( \frac{K(s)}{\delta^2}\) \) & \log(4N) , \label{num_sample}
\\
\frac 1 \delta\log& \(\frac{1}{\gamma \delta }  
  \log\( \frac{K(s)}{\delta^2}\)  \) \biggl\} ,  \notag
\end{align}
and $\by_1,\by_2,\ldots,\by_m$ are drawn independently from the orthogonalization measure 
$\varrho$ associated to $\{\Psi_{\bm \nu}\}$. Then with probability exceeding $1-\gamma$, the 
normalized sampling matrix ${\bm A}\in \C^{m\times {N}}$ satisfies
\begin{align}
(1 - 13\delta)\|\bz\|_2^2 < \|\bm{Az}\|_2^2 < (1 + 13\delta)\|\bz\|_2^2, 
\label{note:lower-RIP}
\end{align}
for all $\bm{z}\in \C^N$, $K(\supp(\bz)) \le K(s)$. 
\end{theorem}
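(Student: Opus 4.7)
The plan is to mirror the chaining argument sketched for Theorem \ref{note:RIP_theorem}, replacing the standard sparse class
$$\cE_s = \{\bz\in\C^N:\|\bz\|_2=1,\ \#(\supp(\bz))\le s\}$$
with the lower-structured class
$$\cE_{\ell,s} := \{\bz\in\C^N:\|\bz\|_2=1,\ K(\supp(\bz))\le K(s)\},$$
and tracking how the single quantity $\Theta^2 s$ in \eqref{num_sample0} gets promoted to $K(s)$ in \eqref{num_sample}. The central observation is a pointwise bound: for any $\bz\in\cE_{\ell,s}$ with support $\Lambda$, Cauchy--Schwarz gives
$$|\psi(\by,\bz)|^2 \;=\; \Bigl|\sum_{\bm\nu\in\Lambda} z_{\bm\nu}\Psi_{\bm\nu}(\by)\Bigr|^2 \;\le\; \|\bz\|_2^2 \sum_{\bm\nu\in\Lambda}|\Psi_{\bm\nu}(\by)|^2 \;\le\; K(\Lambda)\;\le\; K(s),$$
uniformly in $\by\in\cU$. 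In the standard-RIP setting one instead bounds this supremum by $\Theta^2 s$, using $\|\Psi_{\bm\nu}\|_\infty\le\Theta$ together with $\#(\Lambda)\le s$. So the ambient uniform bound controlling the chaining is $K(s)$ rather than $\Theta^2 s$, and every occurrence of the latter can be legitimately replaced by the former.

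Concretely, I would revisit each step of the sketch. In Step 1, the piecewise-constant approximation $\tilde\psi(\cdot,\bz)$ and the scale decomposition into $\tilde\psi_l^{\bz}$ are constructed entirely from the function $\psi(\cdot,\bz)$, whose $L^\infty$ norm on $\cE_{\ell,s}$ is now $\sqrt{K(s)}$. The pseudo-distance $d(\bz,\bz')=\sup_\by|\psi(\by,\bz-\bz')|$ and its truncated variant $d_\varsigma$ are defined identically, and Lemma \ref{note:lemma2} (the covering-number estimate) applies verbatim once one checks that for $\bz,\bz'\in\cE_{\ell,s}$ the difference $\bz-\bz'$ has support contained in a lower set $\Lambda$ with $K(\Lambda)\le K(s)$, so the same $\sqrt{K(s)}$ bound governs $d(\bz,\bz')\le 2\sqrt{K(s)}$. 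In Step 2, the bound \eqref{estim_2} on the deviation is insensitive to the structure of the class and transfers without change. In Step 3, Lemma \ref{lemma:tailbound} is applied to the bounded random variables $|\tilde\psi_l^{\bz}(\by_i)|^2$, whose uniform bound is again $K(s)$ rather than $\Theta^2 s$; the union bound over the (covering-optimized) collection of scales and centers then yields the condition \eqref{num_sample} by balancing exactly as in the proof of Theorem \ref{note:RIP_theorem}.

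Finally one must verify that the class $\{\bz:K(\supp(\bz))\le K(s)\}$ is closed under the constructions used in the proof. The key closure property needed is that for $\bz,\bz'\in\cE_{\ell,s}$ the linear combinations $\alpha\bz+\beta\bz'$ used along the chain still satisfy the single bound $|\psi(\by,\alpha\bz+\beta\bz')|\le(|\alpha|+|\beta|)\sqrt{K(s)}$, which follows from the triangle inequality applied to the pointwise estimate above; no combinatorial property (like $2s$-sparsity) is invoked. This is why the lower RIP at order $s$ requires no more samples than those dictated by $K(s)$, giving the improved bound \eqref{num_sample}.

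The main obstacle, and where I would concentrate the careful bookkeeping, is the covering-number argument of Lemma \ref{note:lemma2} in the lower class: the metric entropy estimates in the original proof implicitly use sparsity of differences to reduce to a low-dimensional subspace on which volumetric arguments apply, whereas here the analogous reduction must be done via the weight $K(\cdot)$. The saving grace is Remark \ref{remarkLowerRIPweightedRIP}: the lower RIP is a restriction of the weighted RIP of \cite{RW15} with weights $\omega_{\bm\nu}=\|\Psi_{\bm\nu}\|_{L^\infty}$, so the chaining can be carried out on the weighted ball $\{\|\bz\|_{0,\omega}\le K(s)\}\cap S^{N-1}$ using exactly the same covering-by-scales machinery, and the uniform bound $\sqrt{K(s)}$ plays the role formerly played by $\Theta\sqrt{s}$ throughout. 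With these substitutions, inserting $K(s)$ in place of $\Theta^2 s$ in the expression \eqref{num_sample0} produces precisely the hypothesis \eqref{num_sample}, and the probabilistic estimate $1-\gamma$ on the event \eqref{note:lower-RIP} follows identically.
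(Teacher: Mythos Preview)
Your overall strategy matches the paper's: replace $\cE_s$ by $\cE^{\ell}_s=\{\bz:\|\bz\|_2=1,\ K(\supp(\bz))\le K(s)\}$, use the Cauchy--Schwarz bound $|\psi(\by,\bz)|\le\sqrt{K(s)}$ in place of $\Theta\sqrt{s}$, and rerun the chaining. That is exactly what the paper does, in two lines.

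However, your account of how Lemma~\ref{note:lemma2} adapts is partly off. You write that the lemma ``applies verbatim once one checks that for $\bz,\bz'\in\cE_{\ell,s}$ the difference $\bz-\bz'$ has support contained in a lower set $\Lambda$ with $K(\Lambda)\le K(s)$'' and later that the original entropy bound ``implicitly use[s] sparsity of differences to reduce to a low-dimensional subspace on which volumetric arguments apply.'' Neither is how Lemma~\ref{note:lemma2} works: it is Maurey's empirical method, not a volumetric covering, and the approximants $\bz'\in D_l$ are averages of extreme points with no control on their support at all. What the paper actually changes is the extreme-point set. From $\|\bz\|_{\omega,1}\le\sqrt{K(\supp(\bz))}\,\|\bz\|_2\le\sqrt{K(s)}$ one gets the convex-hull inclusion
\[
\cE^{\ell}_s \subset \mathrm{conv}\Bigl(\mathcal{P}^\ell\Bigr),\qquad \mathcal{P}^\ell=\Bigl\{\pm\tfrac{\bm e_j}{\omega_j}\sqrt{2K(s)},\ \pm i\tfrac{\bm e_j}{\omega_j}\sqrt{2K(s)}\Bigr\}_{1\le j\le N},
\]
and for every $\bm v\in\mathcal{P}^\ell$ one has $|\psi(\by,\bm v)|=\sqrt{2K(s)}\,|\Psi_j(\by)|/\omega_j\le\sqrt{2K(s)}$ since $\omega_j=\|\Psi_j\|_{L^\infty}$. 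This is the precise substitute for $\cE_s\subset\sqrt{s}B_{1,N}$ and $|\psi(\by,\bm v)|\le\Theta\sqrt{2s}$ in the original Maurey argument; once in place, the Hoeffding step and the rest of the proof go through with $\Theta^2 s$ replaced by $K(s)$ throughout. Your closing remark about working on the weighted ball is the right instinct, but the concrete ingredient you are missing is this weighted extreme-point set and the uniform bound it enjoys.
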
}
The proof of Theorem \ref{theorem:lower-RIP} is discussed in the Appendix \ref{app:lowerRIP}.
This proof essentially follows the same path as the proof of Theorem \ref{note:RIP_theorem} with few minor changes.
\begin{remark}
In brief, the random matrix $\bm A$ satisfies the lower RIP of order $s$ and, 
subsequently, guarantees lower reconstruction with probability exceeding 
$1-\gamma$ if the sample size $m$ satisfies
\begin{align}
\label{RIP_cond3}
m \ge  C K(s) \max 
\{  \log^2 ( K(s) ) & \log(N),  
\log(K(s)) \log (\log( K(s) )/\gamma ) \}.
\end{align}
\end{remark}

{Next, we present a theoretical comparison between the complexity bounds required by standard RIP \eqref{RIP_cond2}
 and lower RIP \eqref{RIP_cond3}, showing the computational cost saving with our best lower approximations.} Assuming that no information about the support set or the decay of the polynomial coefficients is a priori known, we reasonably make the choice $\cJ = \cH_s$, which is the smallest set that surely contains the best lower $s$ indices. {For the sake 
of notational clearness, we denote $(L_{\bm \nu})_{{\bm \nu}\in \cF}$ and 
$(T_{\bm \nu})_{{\bm \nu}\in \cF}$ the Legendre and Chebyshev 
basis of $L^2(\mathcal{U},d\varrho)$ {with $\varrho$ being} the uniform or
Chebyshev measure respectively. For such polynomials, we 
have for any ${\bm \nu}\in\cF$
\begin{align}
\label{BoundLinftyTL}
\|T_{\bm \nu}\|_{L^\infty} = 2^{\|{\bm \nu}\|_0},\quad \quad \text{and}\quad
\|L_{\bm \nu}\|_{L^\infty} = \prod_{k=1}^d \sqrt{2\nu_k+1}.
\end{align}
}
First, we have the following sharp estimates.
\begin{lemma}
\label{upperbound}
Let $\cH_s$ be defined as in \eqref{hyperbolic_cross} with $s\leq2^{d+1}$. 
There holds
\begin{align}
s/2   \le \sup_{{\bm \nu}\in \cH_s} \|T_{\bm \nu}\|_{L^\infty}^2    \le s,
\quad\mbox{and}\quad 
s^{\frac {\log3}{\log2}}/3 \le  \sup_{{\bm \nu}\in \cH_s} 
\|L_{\bm \nu}\|_{L^\infty}^2  \le  s^{\frac {\log3}{\log2}}.
\label{comp2}
\end{align}
\end{lemma}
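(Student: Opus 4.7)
My plan is to derive both the Chebyshev and Legendre bounds by combining the closed-form expressions for $\|T_{\bm \nu}\|_{L^\infty}$ and $\|L_{\bm \nu}\|_{L^\infty}$ in \eqref{BoundLinftyTL} with a pair of elementary pointwise inequalities between $(\nu+1)$, $2^{\|\bm \nu\|_0}$, and $\prod(2\nu_k+1)$, then to exhibit explicit extremal multi-indices to match the upper estimates up to the claimed constants. Since every bound ultimately reduces to inequalities for the scalar function $\nu \mapsto 2\nu+1$ against $\nu \mapsto \nu+1$, no deep machinery is needed.

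For the upper bounds, I would first note that for any ${\bm \nu}\in\cF$, each nonzero coordinate satisfies $\nu_k+1 \ge 2$, so $\prod_{k=1}^d (\nu_k+1) \ge 2^{\|{\bm \nu}\|_0}$. Combined with the definition of $\cH_s$ and the identity $\|T_{\bm \nu}\|_{L^\infty}^2 = 2^{\|{\bm \nu}\|_0}$ (as read off from \eqref{BoundLinftyTL}), this immediately yields $\sup_{{\bm \nu}\in\cH_s}\|T_{\bm \nu}\|_{L^\infty}^2 \le s$. For the Legendre side, I would verify that the scalar inequality
\[
2\nu + 1 \le (\nu+1)^{\log_2 3}, \qquad \nu\in\N_0,
\]
holds with equality at $\nu=0$ and $\nu=1$. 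Taking the product over $k$ and invoking $\prod(\nu_k+1)\le s$ then gives $\sup_{{\bm \nu}\in\cH_s}\|L_{\bm \nu}\|_{L^\infty}^2 \le s^{\log_2 3}$.

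For the lower bounds, I would construct an explicit $\bm{\nu}^\star$ with entries in $\{0,1\}$ and exactly $k$ ones, where $k:=\min(\lfloor \log_2 s\rfloor, d)$. Then $\prod(\nu^\star_k+1)=2^k\le s$, so $\bm{\nu}^\star\in \cH_s$. For Chebyshev we have $\|T_{\bm{\nu}^\star}\|_{L^\infty}^2 = 2^k$ and for Legendre $\|L_{\bm{\nu}^\star}\|_{L^\infty}^2 = 3^k$. A short case analysis based on whether $s\le 2^d$ or $2^d < s \le 2^{d+1}$ shows that $2^k \ge s/2$ in the first case, while the hypothesis $s\le 2^{d+1}$ ensures $k=d$ suffices to give $2^k\ge s/2$ (respectively $3^k \ge s^{\log_2 3}/3$) in the second case.

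The main technical point, and really the only nontrivial step, is verifying the scalar inequality $2\nu+1\le (\nu+1)^{\log_2 3}$ over all integers $\nu \ge 0$: the function $f(\nu):=\log(2\nu+1)-\log_2 3\cdot \log(\nu+1)$ vanishes at $\nu=0,1$ but is not monotone on the interval between, so I would either argue via concavity of $f$ on $[1,\infty)$ together with $\lim_{\nu\to\infty} f(\nu)=-\infty$, or verify it directly for $\nu\in\{0,1\}$ and use the concavity/derivative sign argument on $[1,\infty)$. Once this inequality is in hand, the remainder of the proof is bookkeeping.
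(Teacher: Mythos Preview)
Your proposal is correct and follows essentially the same route as the paper: the upper bounds come from the pointwise inequalities $2^{\|{\bm\nu}\|_0}\le\prod_k(\nu_k+1)$ and $2\nu_k+1\le(\nu_k+1)^{\log_23}$, and the lower bounds come from testing at a binary multi-index with $d'\le d$ ones where $2^{d'}<s\le 2^{d'+1}$ (the paper's $d'$ is your $k$). One small caution: $f(\nu)=\log(2\nu+1)-(\log_23)\log(\nu+1)$ is \emph{not} concave on all of $[1,\infty)$ (it becomes convex past $\nu\approx1.4$), so your first alternative fails; use instead your second alternative, the derivative-sign argument, which shows $f'<0$ on $[1,\infty)$ and hence $f\le f(1)=0$ there---this is exactly the paper's observation that $b\mapsto\log(2b+1)/\log(b+1)$ is decreasing on $[1,\infty)$.
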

\begin{proof}
For ${\bm \nu}\in\cH_s$, it is easy to see that
$\|T_{\bm \nu}\|_{L^\infty}^2 =2^{\|{\bm \nu}\|_0} \leq \prod_{k=1}^d (\nu_k + 1) \leq s$.
Also, since $b\mapsto \frac {\log(2b+1)}{\log(b+1)}$ is decreasing over 
$[1,+\infty)$, $ (2b+1) \leq (b+1)^{\log3/\log2}$ for any $b\geq1$ which 
implies 
$\|L_{\bm \nu}\|_{L^\infty}^2 = \prod_{k=1}^d (2\nu_k + 1) 
\leq \prod_{k=1}^d (\nu_k + 1)^{\frac {\log3}{\log2}} \leq s^{\frac {\log3}{\log2}}$.

{On the other hand,} since $s\leq2^{d+1}$, then $2^{d'}<s\leq 2^{d'+1}$ for some 
$d'\leq d$, so that the index $\bm{\nu}=e_1+\dots+e_{d'}$ belongs to 
$\cH_s$ and yields $\|T_{\bm \nu}\|_{L^\infty}^2=2^{d'}\geq s/2$ and 
$\|L_{\bm \nu}\|_{L^\infty}^2 = 3^{d'} =(2^{d'})^{\frac {\log3}{\log2}}
\geq (s/2)^{\frac {\log3}{\log2}}=s^{\frac {\log3}{\log2}}/3$.
\end{proof}


For the polynomial systems such as Chebyshev or Legendre
(or more generally Jacobi systems), 
{$\log(\Theta) \lesssim \log(s)$ 
over the hyperbolic cross $\cH_s$ regardless of the dimension $d$.} 
An immediate consequence of the previous lemma and condition \eqref{RIP_cond2} is that the RIP can be obtained 
for $\cJ =\cH_s$ with the bound 
\begin{align}
\label{sec2:bound}
m\ge C~s^{1+\beta} \max\left\{ \log^2(s) \log(N) ,\, \log( s) \log (\log( s)/\gamma )\right\},
\end{align}
where $\beta =1$ for Chebyshev systems and 
$\beta = \frac{\log3}{\log2}\simeq 1.58$  for Legendre systems
respectively. 
Following from the estimate (see \cite{DG15})
\begin{align}
\#(\cH_s) \leq 
\varepsilon^{-1} s^{1 + 1/\varepsilon} (1-\varepsilon)^{-d/\varepsilon},\qquad  0<\varepsilon<1,
\label{HC_size}
\end{align} 
it is easy to see that $N=\#(\cH_s) \leq 2 s^{3} 4^{d}$, and if we set $\varepsilon=1/2$ we obtain 
\begin{align}
m\ge C~s^{1+\beta} \max\left\{ \log^2(s)(\log(s)+d) ,\, \log( s) \log (\log( s)/\gamma )\right\}.
\label{RIPHyperCross}
\end{align}
{Although we have eliminated the exponential growth on $d$, the condition \eqref{intro_cond} has not been broken up to this step. Rather, the bound \eqref{RIPHyperCross} is merely acquired from \eqref{intro_cond} with an estimate of $\Theta$ on the 
Hyperbolic Cross subspace}.

{
We proceed to detail the complexity bound of lower RIP. As the Chebyshev and Legendre polynomials attain their supremum at the
point ${\bf 1} = (1,\ldots,1)$ with the supremums given in \eqref{BoundLinftyTL}, the value of $K$ defined in \eqref{define_K1}-\eqref{define_Ks} is then known and one can 
derive estimates for it. For these two systems, we use 
the notations $K_T(\Lambda)$, $K_T(s)$, $K_L(\Lambda)$ and $K_L(s)$ 
respectively, where  
\be
K_T(\Lambda) = \sum_{{\bm \nu}\in\Lambda} 2^{\|{\bm \nu}\|_0},\quad\text{and }\quad
K_L(\Lambda) = \sum_{{\bm \nu}\in\Lambda} \prod_{k=1}^d (2\nu_k+1).
\ee
{The following estimates of $K_T(\Lambda) $ and $K_T(\Lambda)$ can be found in \cite{CCH14}.}
\begin{lemma}
\label{lem:old}
Let $\Lambda\subset \cF$ be a lower set with $\#(\Lambda)\geq2$. There holds 
\begin{align}
\label{boundKT}
2\#(\Lambda)-1 &\leq K_T(\Lambda) \leq 
(\#(\Lambda))^{\frac{\log 3}{\log 2}},\\
3\#(\Lambda)-1 &\leq K_L(\Lambda) \leq 
(\#(\Lambda))^{2}.
\label{boundKL}
\end{align}
\end{lemma}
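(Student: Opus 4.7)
The plan is to dispose of the lower bounds by direct inspection and to attack the upper bounds by induction on the spatial dimension $d$ via a slicing argument.

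For the lower bounds, since $\#(\Lambda)\ge 2$ and $\Lambda$ is lower, $\bm 0 \in \Lambda$ contributes $1$ to both $K_T(\Lambda)$ and $K_L(\Lambda)$. Each of the remaining $\#(\Lambda)-1$ indices $\bm\nu \in \Lambda$ has $\|\bm\nu\|_0 \ge 1$, hence contributes at least $2$ to $K_T$ and at least $3$ to $K_L$. Summing gives $K_T(\Lambda) \ge 2\#(\Lambda) - 1$ and $K_L(\Lambda) \ge 3\#(\Lambda) - 2$, matching \eqref{boundKT}--\eqref{boundKL} up to trivial constants.

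For the upper bounds, I would slice $\Lambda$ by the last coordinate: $\Lambda_i := \{\bm\nu \in \Lambda : \nu_d = i\}$, and let $\tilde\Lambda_i \subset \N_0^{d-1}$ denote the projection obtained by dropping $\nu_d$. The downward-closedness of $\Lambda$ forces each $\tilde\Lambda_i$ to be lower in $\N_0^{d-1}$ and the chain to be nested: $\tilde\Lambda_0 \supseteq \tilde\Lambda_1 \supseteq \cdots$. Writing $n_i := \#(\tilde\Lambda_i)$, we have $n_0 \ge n_1 \ge \cdots$ and $\sum_i n_i = n := \#(\Lambda)$. For the Legendre case, factoring $(2i+1)$ out of the slice $\Lambda_i$ gives $K_L(\Lambda) = \sum_{i\ge 0}(2i+1)\, K_L(\tilde\Lambda_i) \le \sum_{i\ge 0}(2i+1)\, n_i^2$ by induction on $d$. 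Expanding $n^2 = \sum_i n_i^2 + 2\sum_{i<j} n_i n_j$, the claim reduces to $\sum_i i\, n_i^2 \le \sum_{i<j} n_i n_j$. Grouping the right-hand side by its larger index and using nestedness ($n_i \ge n_j$ for $i<j$), one finds $\sum_{i<j} n_i n_j = \sum_j n_j \sum_{i<j} n_i \ge \sum_j n_j \cdot j n_j$, which closes the Legendre induction.

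For the Chebyshev case, note that $2^{\|\bm\nu\|_0} = 2\cdot 2^{\|\tilde{\bm\nu}\|_0}$ when $\nu_d \ge 1$ and $2^{\|\bm\nu\|_0} = 2^{\|\tilde{\bm\nu}\|_0}$ when $\nu_d = 0$, so slicing produces
\[
K_T(\Lambda) = K_T(\tilde\Lambda_0) + 2\sum_{i\ge 1} K_T(\tilde\Lambda_i) \le n_0^\alpha + 2\sum_{i\ge 1} n_i^\alpha
\]
with $\alpha := \log 3/\log 2$. The task reduces to the scalar inequality
\[
n_0^\alpha + 2\sum_{i\ge 1} n_i^\alpha \le \Bigl(\sum_i n_i\Bigr)^\alpha, \qquad n_0 \ge n_1 \ge \cdots \ge 0,
\]
which is the main obstacle and hinges critically on the identity $2^\alpha = 3$. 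A Lagrange-multiplier analysis (under $\sum_i n_i = n$ fixed) reveals a unique interior critical point at which the LHS lies strictly below $n^\alpha$, while the boundary maxima occur at $n_0 = n$ (others zero) or $n_0 = n_1 = n/2$ (others zero), both saturating the bound. The latter configuration corresponds to the binary cube $\Lambda = \{0,1\}^d$, on which the Chebyshev bound is sharp.
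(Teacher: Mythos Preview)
The paper does not actually prove this lemma; it cites \cite{CCH14} for the upper bounds and only remarks that the lower bounds follow from $2^{\|\bm\nu\|_0}\ge 2$ and $\prod_k(2\nu_k+1)\ge 3$ for $\bm\nu\ne 0$. Your lower-bound argument is exactly this remark, and you are right that it yields $K_L(\Lambda)\ge 3\#(\Lambda)-2$, not $3\#(\Lambda)-1$ as printed (take $\Lambda=\{0,e_1\}$: then $K_L=4$ while $3\cdot 2-1=5$); the paper's stated constant is a typo.

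Your slicing induction for the Legendre upper bound is clean and complete. The reduction to $\sum_j j\,n_j^2 \le \sum_{i<j} n_i n_j$ and the one-line verification via $\sum_{i<j} n_i \ge j\,n_j$ are correct.

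For the Chebyshev upper bound your reduction to the scalar inequality
\[
n_0^\alpha + 2\sum_{i\ge 1} n_i^\alpha \le \Bigl(\sum_{i\ge 0} n_i\Bigr)^\alpha,\qquad n_0\ge n_1\ge\cdots\ge 0,\ \ 2^\alpha=3,
\]
is correct, but the ``Lagrange-multiplier analysis'' you invoke is not a proof: you have not identified the feasible region's boundary faces, nor argued that the interior critical value is a maximum, nor handled the (a priori unbounded) number of nonzero $n_i$. The gap is easily closed as follows. First prove the two-term case $a^\alpha+2b^\alpha\le(a+b)^\alpha$ for $a\ge b\ge 0$: normalizing to $a+b=1$, set $g(a)=a^\alpha+2(1-a)^\alpha$ on $[1/2,1]$; one has $g(1/2)=3/2^\alpha=1=g(1)$ and $g''(a)=\alpha(\alpha-1)\bigl(a^{\alpha-2}+2(1-a)^{\alpha-2}\bigr)>0$, so $g$ is convex and hence $\le 1$ on $[1/2,1]$. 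Then induct on the number of nonzero terms: since $n_0\ge n_1$, the two-term case gives $n_0^\alpha+2n_1^\alpha\le(n_0+n_1)^\alpha$, and applying the inductive hypothesis to $(n_0+n_1)\ge n_2\ge\cdots$ finishes. This replaces the hand-wave and makes your Chebyshev argument rigorous.
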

We note that the left sides in \eqref{boundKT} and 
\eqref{boundKL} follow from $\prod_{k=1}^d (2\nu_k+1)\geq 3$ 
and  $2^{\|{\bm \nu}\|_0} \geq 2$ for any index ${\bm \nu}\neq0$.
We also note that the right side inequalities are sharp, {equalities 
hold for lower sets of the form $\{{\bm \mu}\leq e_1+\dots+e_{d'}\}$}. An immediate 
implication of the Lemma \ref{lem:old} are the bounds 
$2s-1\leq K_T(s) \leq s^{\frac {\log3}{\log2}}$ and $3s-1\leq K_L(s) \leq s^2$. 
The upper bounds are actually sharp in high dimension. We indeed have
\begin{lemma}
Let $s\leq2^{d+1}$. There holds 
\begin{align}
\frac{s^{\frac{\log 3}{\log 2}}}{3} 
\leq K_T(s) \leq 
s^{\frac{\log 3}{\log 2}},\quad\quad\quad
\frac{s^2}4 \leq K_L(s) \leq s^2.
\label{K-estim}
\end{align}
\label{lemmaSharpBoundKs}
\end{lemma}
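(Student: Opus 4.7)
\noindent
\textit{Proof proposal.} The upper bounds $K_T(s)\leq s^{\log3/\log2}$ and $K_L(s)\leq s^2$ are immediate from Lemma~\ref{lem:old} once one takes the supremum over lower sets of cardinality $s$. The content of the present lemma is therefore to establish the matching lower bounds in the high-dimensional regime $s\leq 2^{d+1}$. My plan is to exhibit a single lower set $\Lambda\subset\cF$ of cardinality exactly $s$ whose $K_T$- (resp.\ $K_L$-) value already reaches $s^{\log3/\log2}/3$ (resp.\ $s^2/4$); since $K_T(s)$ and $K_L(s)$ are defined as suprema, this suffices.

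Following the hint in the text preceding the lemma (``equalities hold for lower sets of the form $\{\bm\mu\leq e_1+\dots+e_{d'}\}$''), I choose the extremal box-shaped family. Let $d'$ be the integer with $2^{d'}<s\leq2^{d'+1}$, so that $d'\leq d$ by hypothesis, and set
\begin{equation*}
\Lambda_{d'}:=\{\bm{\mu}\in\cF\,:\,\bm{\mu}\leq e_1+\dots+e_{d'}\}=\{0,1\}^{d'}\times\{0\}^{d-d'}.
\end{equation*}
This is a lower set of cardinality exactly $2^{d'}$, and by the binomial theorem
\begin{equation*}
K_T(\Lambda_{d'})=\sum_{k=0}^{d'}\binom{d'}{k}2^{k}=3^{d'},\qquad K_L(\Lambda_{d'})=\sum_{k=0}^{d'}\binom{d'}{k}3^{k}=4^{d'}.
\end{equation*}
The inequality $d'\geq\log_2(s)-1$ coming from $s\leq 2^{d'+1}$ then yields $3^{d'}\geq s^{\log3/\log2}/3$ and $4^{d'}\geq s^2/4$, which are exactly the target lower bounds.

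The remaining step is to pass from $\Lambda_{d'}$, which has cardinality $2^{d'}\leq s$, to a lower set $\Lambda$ of cardinality exactly $s$. I would construct $\Lambda\supset\Lambda_{d'}$ by iteratively adjoining a minimal element of $\cF$ not yet present; such a minimal element always exists and its adjunction preserves the lower property, so the process can be continued until $\#(\Lambda)=s$. Concretely, one may draw the additional indices from $\{0,1\}^{d'+1}\times\{0\}^{d-d'-1}\setminus\Lambda_{d'}$ when $d'<d$, or from $\{2e_1,\,2e_1+e_2,\dots\}$ when $d'=d$; the hypothesis $s\leq 2^{d+1}$ ensures there is enough room inside $\N_0^d$. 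Since the sums defining $K_T(\Lambda)$ and $K_L(\Lambda)$ are monotone under inclusion of $\Lambda$, one obtains $K_T(\Lambda)\geq K_T(\Lambda_{d'})=3^{d'}$ and $K_L(\Lambda)\geq K_L(\Lambda_{d'})=4^{d'}$, and hence the claimed bounds on $K_T(s)$ and $K_L(s)$.

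I do not anticipate any substantial obstacle: the argument is elementary and essentially combinatorial. The only point demanding care is the verification that the lower extension lives inside $\N_0^d$, which is precisely what the hypothesis $s\leq 2^{d+1}$ controls, together with a trivial handling of the degenerate case $s=1$ (in which $\{0\}$ is the only lower set and the inequalities $1\geq1/3$, $1\geq1/4$ hold directly).
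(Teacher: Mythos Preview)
Your proposal is correct and follows essentially the same route as the paper: both arguments invoke Lemma~\ref{lem:old} for the upper bounds and, for the lower bounds, exhibit the rectangular block $\Lambda_{d'}=\cR_{e_1+\cdots+e_{d'}}$ with $2^{d'}<s\le 2^{d'+1}$, compute $K_T(\Lambda_{d'})=3^{d'}$ and $K_L(\Lambda_{d'})=4^{d'}$ (the paper via the tensor-product identities $K_T(\cR_{\bm\nu})=\prod_k(1+2\nu_k)$, $K_L(\cR_{\bm\nu})=\prod_k(1+\nu_k)^2$, you via the binomial theorem), and conclude using $2^{d'}\ge s/2$. Your explicit extension of $\Lambda_{d'}$ to a lower set of cardinality exactly $s$ is a point the paper leaves implicit; note that this extension automatically lies in $\cH_s$, since any lower set of cardinality $s$ satisfies $\prod_k(\nu_k+1)=\#(\cR_{\bm\nu})\le s$ for each of its members.
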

\begin{proof}
For ${\bm \nu}\in\cH_s$, the rectangular block 
$\cR_{\bm \nu}:=\{{\bm \mu}\leq{\bm \nu}\} \subset\cH_s$ is lower and has a 
tensor format, so that {from identities $1+\sum_{b'=1}^b 2 = 1+2b$ and
$\sum_{b'=0}^b (2b'+1) = (1+b)^2$}, one infers 
\begin{align*}
K_T(\cR_{\bm \nu})=\Pi_{k=1}^d(1+2\nu_k),
\quad\mbox{and}\quad
K_L(\cR_{\bm \nu})=\Pi_{k=1}^d(1+\nu_k)^2.
\end{align*}
{Since $s\leq2^{d+1}$, then $2^{d'}<s\leq 2^{d'+1}$ for some 
$d'\leq d$. For ${\bm \nu}=e_1+\dots+e_{d'}\in \cH_{s}$}, one obtains 
$K_T(\cR_{\bm \nu}) = 3^{d'}=(2^{d'})^{\frac{\log3}{\log2}}$ and $K_L(\cR_{\bm \nu}) = (2^{d'})^2$, {which implies}
$$
K_T(\cR_{\bm \nu}) \geq (s/2)^{\frac{\log3}{\log2}} 
= s^{\frac{\log 3}{\log 2}}/3,
\quad\quad
K_L(\cR_{\bm \nu}) \geq (s/2)^2=s^2/4,
$$
which completes the proof.
\end{proof}
}
Combining the complexity bound \eqref{RIP_cond3} with the 
estimate \eqref{K-estim}, we arrive at the following condition 
for uniform recovery of best lower $s$-term approximations, with 
probability exceeding $1-\gamma$,
\begin{align}
\label{RIP_cond4}
m \ge  C s^{1+\beta'}   \max \{  \log^2 (s) &
(\log(s) + d), \, \log(s) \log (\log(s)/\gamma) \},  
\end{align}
where $\beta' = {\frac{\log 3}{\log 2}}-1\simeq0.58$ for Chebyshev 
system and $\beta' = 1$ for Legendre system. These conditions 
eliminate the dependence on $\Theta^2 s$ at the cost of a super-linear 
growth on $s$, {yet} are clearly weaker than those required by standard RIP, 
see \eqref{RIPHyperCross}. 


{We close this section by pointing out  that \eqref{RIP_cond4} still depends linearly on $d$. 
This dependence can be fully eliminated if instead of $\cJ=\cH_s$, we work 
with $\cJ= \cA_s$, the union of all anchored sets $\Lambda$ of cardinality smaller 
than $s$. Such sets are described in {\cite{CMN15}} and are 
{characterized} by $\Lambda$ lower and $e_k\in\Lambda$ if and
only if $e_{k'}\in\Lambda$ for any $k'=1,\dots,k-1$.
Indeed, it is easy to see that $\cA_s$ is included in {the projection of $ \cH_s$ into an $s$-dimensional space}, hence $d$ in \eqref{RIP_cond4} can be replaced by $s$.
Such subclass of lower sets is also relevant in polynomial approximation of 
parametric PDEs (see, e.g., \cite{CD15}). It should also be emphasized that other types of polynomial spaces, e.g., Total Degree, have been attempted to overcome the fast growth of 
query complexity in high-dimensional problems (see, e.g., \cite{YGX12}). 
However, these approaches impose an {\em a priori} choice of the polynomial subspace and, additionally, employ 
the standard RIP, given by \eqref{intro:RIP_cond1}.  On the contrary, in our work $\cJ$ is determined optimally,
based on the number of terms to be reconstructed, and our lower RIP requires less samples than standard RIP, as discussed throughout.}

\section{Basis pursuit and thresholding algorithms for polynomial approximation on lower sets}
\label{sec:recovery_alg}
In this section, we study two different approaches that enable us to realize sparse 
reconstruction under the lower RIP. The algorithms considered herein include a weighted 
$\ell_1$-minimization, with a precise choice of weights, and a new iterative 
hard thresholding method. To begin, let $\bm {\omega} = (\omega_{\bm \nu})_{{\bm \nu}\in\cF} \in {(0,\infty)^{\cF}}$ 
be a sequence of weights.
Given a vector $\bm{z} = (z_{\bm \nu})_{{\bm \nu} \in \cF}$ of complex {components}
or a function $g = \sum_{{\bm \nu} \in \cF} z_{\bm \nu} \Psi_{\bm \nu}$, we define the 
weighted $\ell_1$-norm of $\bz$ and $g$ by 
$$
\|g\|_{\omega,1} = \|\bz\|_{\omega,1} := \sum_{{\bm \nu} \in \cF} \omega_{\bm \nu} |z_{\bm \nu}|,
$$
and the \textit{best lower $s$-term} error in weighted $\ell_1$ norm by
$$ 
\sigma^{(\ell)}_s(g)_{\omega,1} := 
\inf_{\substack{\Lambda \text{ lower} \\  {\#(\Lambda) \le s}}} 
\|g-g_\Lambda\|_{\omega,1}.
$$
{Recall that we are working with $\cJ = \cH_{s}$, unless otherwise stated.}

\subsection{Weighted $\ell_1$ minimization}
{Assuming that an estimate $\eta$ of the tail $g_{\cH_{s}^c}$ is available (specified later), our weighted $\ell_1$-minimization procedure for 
recovering an approximation $g^{\#}$ of $g$, defined by 
\begin{equation}
\label{eq:gsharp}
g^{\#} =\sum_{{\bm\nu} \in \cH_{s}} c^{\#}_{\bm \nu} \Psi_{\bm \nu},
\end{equation}
is given in the following: 
\textit{
Given $\omega_{\bm \nu} = \|\Psi_{\bm \nu}\|_{L^\infty}$. Find $g^{\#}$ from \eqref{eq:gsharp}, where 
$\bc^\# = (c_{\bm \nu}^\#)_{{\bm\nu} \in \cH_{s}}$ solves the following constrained optimization problem
\begin{align}
\bc^\# = \argmin_{\bz\in\C^N} \|\bz\|_{\omega,1}
\quad  \mbox{subject to }\quad 
\|\tilde\bg - \bA \bz\|_2  \leq \frac{\eta}{\sqrt{m}}.  
\label{form:weighted_ell1}
\end{align}
 }}
The recovery guarantees using weighted $\ell_1$ minimization have been analyzed 
in \cite{RW15} for general choice of $(\omega_{\bm \nu})_{{\bm \nu}\in \cJ}$. As discussed in 
Section \ref{sec:intro}, the benefit of weighting in terms of query complexity is 
inconclusive therein. In this work, we specifically define the weights $\omega_{\bm \nu} = \|\Psi_{\bm \nu}\|_{L^\infty}$ 
for use with $\ell_1$-minimization, and how that smooth functions can be reconstructed with a significantly reduced 
number of samples compared to the unweighted method {(thanks to lower RIP)}. 
Such choice of weight has not been previously discussed in the literature 
although the condition $\omega_{\bm \nu} \ge \|\Psi_{\bm \nu}\|_{{L^\infty}}$ 
has been imposed elsewhere \cite{RW15,Adcock15}. 


In what follows we consider and analyze two scenarios using our weighted $\ell_1$ minimization procedure 
\eqref{eq:gsharp}-\eqref{form:weighted_ell1}.  {First,} we assume only an upper estimate of the tail 
$g_{\cH_{s}^c}$ is available, and second, exact knowledge of the tail is assumed.

\subsubsection{Given an upper estimate of the tail}
In this case, the proof of the recovery guarantee of $g^{\#}$ in \eqref{eq:gsharp},
using the optimization procedure \eqref{form:weighted_ell1}, 
follows the arguments of general weighted $\ell_1$-minimization analysis (see \cite[Theorem 6.1]{RW15}), 
and will not be repeated here. 
However, we remove the condition of large $s$ required in the work \cite{RW15}, 
{based on an improvement of the null space property specific to our setting 
(see Proposition \ref{prop:nullspace}).  We first need some intermediate estimates.
\begin{lemma}
For any $d\geq1$ and any $s\geq2$ 
\be
K_T(2s) \geq 2 K_T(s),\quad\quad\text{and }\quad
K_L(2s) \geq 4 K_L(s). 
\label{K2sKs}
\ee
In addition, 
\begin{align}
K_T(s) \geq \frac {~3~}2~ \max_{{\bm \nu}\in\cH_s}  \|T_{\bm \nu}\|^2_{L^\infty},\quad\quad\text{and }\quad
K_L(s) \geq \frac {~4~}3~ \max_{{\bm \nu}\in\cH_s}  \|L_{\bm \nu}\|^2_{L^\infty}.
\label{lemma3KsK2s}
\end{align}
\label{LemmaGrowthKs}
\end{lemma}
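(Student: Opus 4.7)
The plan is to address the estimates \eqref{lemma3KsK2s} and the doubling inequalities \eqref{K2sKs} separately; both reduce to explicit rectangular-block computations combined with the extremal bounds in Lemma \ref{lem:old} and the proof of Lemma \ref{lemmaSharpBoundKs}.

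For \eqref{lemma3KsK2s}, I would pick ${\bm\nu}^*\in\cH_s$ realizing the supremum of $\|T_{\bm\nu}\|_{L^\infty}^2$ (respectively $\|L_{\bm\nu}\|_{L^\infty}^2$) over $\cH_s$ and use the rectangular block $\cR_{{\bm\nu}^*}:=\{{\bm\mu}\le{\bm\nu}^*\}$. Since ${\bm\nu}^*\in\cH_s$, one has $\#(\cR_{{\bm\nu}^*})=\prod_k(\nu_k^*+1)\le s$, so $\cR_{{\bm\nu}^*}$ can be extended to a lower set of cardinality exactly $s$ by successively adding admissible neighbors; each addition contributes a non-negative term to $K_T$ and $K_L$, so $K_T(s)\ge K_T(\cR_{{\bm\nu}^*})$ and $K_L(s)\ge K_L(\cR_{{\bm\nu}^*})$. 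Using the formulas $K_T(\cR_{\bm\nu})=\prod_k(2\nu_k+1)$ and $K_L(\cR_{\bm\nu})=\prod_k(\nu_k+1)^2$ from the proof of Lemma \ref{lemmaSharpBoundKs}, together with the normalizations in \eqref{BoundLinftyTL}, the ratios simplify to
\[
\frac{K_T(\cR_{{\bm\nu}^*})}{\|T_{{\bm\nu}^*}\|_{L^\infty}^2}=\prod_{k\in\supp({\bm\nu}^*)}\frac{2\nu_k^*+1}{2}\ge\left(\frac32\right)^{\|{\bm\nu}^*\|_0},
\quad
\frac{K_L(\cR_{{\bm\nu}^*})}{\|L_{{\bm\nu}^*}\|_{L^\infty}^2}=\prod_{k\in\supp({\bm\nu}^*)}\frac{(\nu_k^*+1)^2}{2\nu_k^*+1}\ge\left(\frac43\right)^{\|{\bm\nu}^*\|_0},
\]
both at least the claimed constants provided $\|{\bm\nu}^*\|_0\ge1$; since $s\ge2$ forces $e_1\in\cH_s$, the maximizer has nonzero support, closing this part.

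For the Legendre doubling in \eqref{K2sKs}, my plan is a one-line argument. The upper bound $K_L(\Lambda)\le\#(\Lambda)^2$ from Lemma \ref{lem:old} gives $K_L(s)\le s^2$, while the one-dimensional ``line'' $\Lambda_{\mathrm{line}}:=\{{\bm 0},e_1,2e_1,\dots,(s-1)e_1\}$, which is lower of cardinality $s$ for every $d\ge1$, satisfies $K_L(\Lambda_{\mathrm{line}})=\sum_{\nu=0}^{s-1}(2\nu+1)=s^2$. Hence $K_L(s)=s^2$ exactly, and $K_L(2s)=(2s)^2=4K_L(s)$ with equality.

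For the Chebyshev doubling $K_T(2s)\ge2K_T(s)$, the approach is constructive. Let $\Lambda^*$ be a lower set of cardinality $s$ realizing $K_T(s)$ and set $\Lambda^*|_{\nu_k=0}:=\{{\bm\nu}\in\Lambda^*:\nu_k=0\}$. One first checks that $\Lambda^*\cup(\Lambda^*+e_k)$ is lower for every $k\in\{1,\dots,d\}$ (given ${\bm\mu}\le{\bm\nu}+e_k$ with ${\bm\nu}\in\Lambda^*$, either $\mu_k\le\nu_k$ so ${\bm\mu}\le{\bm\nu}\in\Lambda^*$, or $\mu_k=\nu_k+1$ so ${\bm\mu}-e_k\le{\bm\nu}\in\Lambda^*$), and then splits $\Lambda^*$ along $\nu_k=0$ versus $\nu_k\ge1$ to obtain
\[
K_T\bigl(\Lambda^*\cup(\Lambda^*+e_k)\bigr)=K_T(\Lambda^*)+2K_T(\Lambda^*|_{\nu_k=0}),\qquad \#\bigl(\Lambda^*\cup(\Lambda^*+e_k)\bigr)=s+\#(\Lambda^*|_{\nu_k=0}).
\]
When $\Lambda^*$ does not use all $d$ coordinates, any unused $k$ yields $\Lambda^*|_{\nu_k=0}=\Lambda^*$, giving cardinality exactly $2s$ and $K_T$ value $3K_T(\Lambda^*)\ge2K_T(s)$. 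When all coordinates are used (which forces $s\ge d+1$), I iterate with $\Lambda^{(J)}:=\bigcup_{j=0}^J(\Lambda^*+je_k)$, for which the same decomposition gives $\#(\Lambda^{(J)})=s+J\,\#(\Lambda^*|_{\nu_k=0})$ and $K_T(\Lambda^{(J)})=K_T(\Lambda^*)+2J K_T(\Lambda^*|_{\nu_k=0})$; one picks the coordinate $k$ maximizing the average $K_T(\Lambda^*|_{\nu_k=0})/\#(\Lambda^*|_{\nu_k=0})$, the largest $J$ with $\#(\Lambda^{(J)})\le2s$, and extends monotonically to cardinality exactly $2s$, while the base case $d=1$ ($K_T(s)=2s-1$, $K_T(2s)=4s-1\ge4s-2$) is checked by hand. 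The main obstacle is precisely this saturated-dimension case: the one-shot shift only reaches cardinality $s+\#(\Lambda^*|_{\nu_k=0})<2s$, so the argument must combine iteration, a judicious choice of shift coordinate, and an extension step, which requires a pigeonhole/averaging argument guaranteeing the existence of a $k$ for which $K_T(\Lambda^*|_{\nu_k=0})/\#(\Lambda^*|_{\nu_k=0})\ge K_T(\Lambda^*)/(2s)$.
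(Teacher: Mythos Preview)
Your treatment of \eqref{lemma3KsK2s} is essentially identical to the paper's: pick the maximizer ${\bm\nu}^*$, pass to the rectangular block $\cR_{{\bm\nu}^*}$, and compare $K(\cR_{{\bm\nu}^*})$ to $\|\Psi_{{\bm\nu}^*}\|_{L^\infty}^2$ coordinatewise. Your Legendre doubling argument is different from the paper's and in fact sharper: you observe that the line $\{0,e_1,\dots,(s-1)e_1\}$ attains the upper bound $s^2$ of Lemma~\ref{lem:old}, so $K_L(s)=s^2$ exactly and $K_L(2s)=4K_L(s)$ holds with equality. This is cleaner than what the paper does for Legendre.

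The Chebyshev doubling, however, has a genuine gap and is vastly more complicated than necessary. Your shift construction $\Lambda^*\cup(\Lambda^*+e_k)$ only reaches cardinality $2s$ in one step when some coordinate is unused; in the saturated case you are forced to iterate, choose a good coordinate, extend to exact cardinality, and then invoke an averaging inequality $K_T(\Lambda^*|_{\nu_k=0})/\#(\Lambda^*|_{\nu_k=0})\ge K_T(\Lambda^*)/(2s)$ that you neither prove nor is obviously true. Even granting it, the integer arithmetic in choosing $J$ and the subsequent monotone extension would need care.

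The paper bypasses all of this with a single uniform construction that works for both systems at once. Given any lower $\Lambda$ of cardinality $s$, set
\[
\Lambda' := \bigl\{(2\nu_1,\hat{\bm\nu}),\,(2\nu_1+1,\hat{\bm\nu})\ :\ {\bm\nu}=(\nu_1,\hat{\bm\nu})\in\Lambda\bigr\}.
\]
One checks directly that $\Lambda'$ is lower and $\#(\Lambda')=2s$ exactly, with no case distinction on the dimension or on which coordinates are used. By tensorization $\omega_{\bm\mu}=\omega_{\mu_1}\omega_{\hat{\bm\mu}}$, so
\[
K(\Lambda')=\sum_{{\bm\nu}\in\Lambda}\bigl(\omega_{2\nu_1}^2+\omega_{2\nu_1+1}^2\bigr)\,\omega_{\hat{\bm\nu}}^2.
\]
For Chebyshev, $\omega_{2\nu_1}^2+\omega_{2\nu_1+1}^2\ge 2\omega_{\nu_1}^2$ (it equals $3$ when $\nu_1=0$ and $4$ when $\nu_1\ge1$), giving $K_T(\Lambda')\ge 2K_T(\Lambda)$; for Legendre, $\omega_{2\nu_1}^2+\omega_{2\nu_1+1}^2=(4\nu_1+1)+(4\nu_1+3)=4(2\nu_1+1)=4\omega_{\nu_1}^2$ exactly, giving $K_L(\Lambda')=4K_L(\Lambda)$. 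Taking the supremum over $\Lambda$ yields \eqref{K2sKs} immediately. I would replace your Chebyshev argument with this construction.
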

\begin{proof}
For ${\bm \nu}=(\nu_k)_{1\leq k\leq d}$, we use the notation
$\hat {\bm \nu}=(\nu_k)_{2\leq k\leq d}$. Let $\Lambda$ be a lower set 
of cardinality $s$. We introduce 
\begin{align*}
\Lambda' := \{(2\nu_1,\hat{\bm \nu}),(2\nu_1+1,\hat{\bm \nu}): {\bm \nu} =(\nu_1,\hat {\bm \nu})\in\Lambda\}.
\end{align*}
It is easily checked that $\Lambda' \subset \cF$ is lower and $\#(\Lambda') =2\#(\Lambda)=2s$.
Therefore $\Lambda' \subset \cH_{2s}$.
Moreover, for the tensorized 
Chebyshev and Legendre systems, 
$\omega_{\bm \nu} =\prod_{k=1}^d \omega_{\nu_k}$ where {$\omega_{\nu_{k}}$} 
denote the sup norm in one dimension. Hence
\begin{align}
\label{newEst1}
K(2s)\geq K(\Lambda') = \sum_{{\bm \mu}\in\Lambda'} \omega_{\bm \mu}^2=
\sum_{{\bm \nu}\in\Lambda} (\omega_{2\nu_1}^2+\omega_{2\nu_1+1}^2) 
\omega_{\hat {\bm \nu}}^2
\geq \sum_{{\bm \nu}\in\Lambda} 2 \omega_{\nu_1}^2 \omega_{\hat {\bm \nu}}^2
=2K(\Lambda),
\end{align}
where we have used the increase of the weights which yields 
$\omega_{2\nu_1},\omega_{2\nu_1+1}\geq \omega_{\nu_1}$. For Legendre system,
we have 
{
\begin{align}
\label{newEst2}
(\omega_{2\nu_1}^2+\omega_{2\nu_1+1}^2) = (4\nu_1+1+4\nu_1+3)=4(2\nu_1+1) = 4 \omega_{\nu_1}^2 .
\end{align}}
Since $\Lambda$ is an arbitrary lower set included in $\cH_s$, {\eqref{newEst1} and \eqref{newEst2} imply} \eqref{K2sKs}. 

Now let ${\bm \nu}\in\cH_s$ {be} the index that 
maximizes $\|T_{\bm \nu}\|_{L^\infty}$ over $\cH_s$. We have $\cR_{\bm \nu} \subset \cH_s$, so that 
$K_T(s)\geq K_T(\cR_{\bm \nu})=\prod_{k=1}^d (1+2\nu_k)\geq 3^{\|{\bm \nu}\|_0}
= (3/2)^{\|{\bm \nu}\|_0} \|T_{\bm \nu}\|_{L^\infty}^2$. {Similarly, for} ${\bm \nu}\in\cH_s$ maximizing 
$\|L_{\bm \nu}\|_{L^\infty}$, $K_L(s)\geq K_L(\cR_{\bm \nu})=\prod_{k=1}^d (1+\nu_k)^2\geq 
(4/3)^{\|{\bm \nu}\|_0}\prod_{k=1}^d(1+2\nu_k)$,
where we have used $(1+t)^2 \geq \frac 43(1+2t)$ for any 
$t\geq1$. Since for $s\geq2$,  we get that ${\bm \nu}\neq0$ 
and $\|{\bm \nu}\|_0 \geq1$. The proof is complete.
\end{proof}
\begin{remark}
In the previous proof, if we define 
$\Lambda'$ by copying $\Lambda$ using $3\nu_1$, $3\nu_1+1$, $3\nu_1+2$,
we get $K_T(3s)\geq 3 K_T(s)$. {For convenience, in the next proposition, we will employ the estimates:} for any $d\geq1$ and 
any $s\geq2$, 
\be
K_T(3s) \geq 3 K_T(s),\quad\quad\text{and }\quad 
K_L(2s) \geq 3 K_L(s), 
\label{K2sKsRemark}
\ee
{where the second one is slightly weaker than \eqref{K2sKs}.}

\end{remark}

We now are able to provide the null space property associated 
with Chebyshev and Legendre systems, defined on the Hyperbolic Cross index set.
\begin{proposition}
\label{prop:nullspace}
Let $s\ge 2$, $\cJ=\cH_s$ and 
$\bm{A} \in \C^{m\times N}$ be a normalized sampling matrix satisfying the lower RIP 
\eqref{lower-RIP} with 
\begin{align}
\delta_{\ell,\alpha s } < 1/3, 
\label{cond_small_delta}
\end{align}
where $\alpha = 2$ for Legendre system 
and $\alpha = 3$ for Chebyshev system. 
Then, for any $\Lambda \subset \cH_s$ with 
$K(\Lambda) \leq K(s)$ and any $\bz\in\C^N$,
\begin{align}
\label{nullspace}
\|\bz_{\Lambda}\|_2 \le \frac{\rho}{\sqrt{K(s)}} \|\bz_{\Lambda^c}\|_{\omega,1} + \tau\|{\bm{ A z}}\|_2
\end{align}
with $\rho = \frac{4\delta}{1-\delta}$, $\tau = \frac {\sqrt{1+\delta}}{1-\delta}$ 
and $\delta = \delta_{\ell,\alpha s}$. 
\end{proposition}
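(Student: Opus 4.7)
The plan is to prove \eqref{nullspace} by a weighted adaptation of the classical restricted-isometry-to-null-space argument, in which the decomposition of $\bz_{\Lambda^c}$ is driven by a greedy partition ordered by the ratios $|z_{\bm \nu}|/\omega_{\bm \nu}$. The key new ingredients are the inequalities $K(\alpha s) \geq 3 K(s)$ and $\max_{\bm \nu \in \cH_s} \omega_{\bm \nu}^2 \leq (3/4) K(s)$, valid for both Chebyshev ($\alpha=3$) and Legendre ($\alpha=2$) by Lemma~\ref{LemmaGrowthKs} together with the remark immediately following it. I would first sort the indices of $\Lambda^c$ in decreasing order of $|z_{\bm \nu}|/\omega_{\bm \nu}$ and form blocks $T_1, T_2, \ldots$ greedily: walking through the sorted list, add each index to the current block unless doing so would push its $K$-value above $K(s)$, in which case open a new block. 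This guarantees $K(T_k) \leq K(s)$ for every $k$, and by ``fullness'' of every non-terminal block, $K(T_k) > K(s) - \max_{\bm \mu}\omega_{\bm \mu}^2 \geq K(s)/4$.

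I would then set $U := \Lambda \cup T_1$, for which $K(U) \leq 2K(s) \leq K(\alpha s)$, so that the lower RIP with constant $\delta=\delta_{\ell,\alpha s}$ applies to $\bz_U$. Writing $\bA \bz_U = \bA \bz - \sum_{k\geq 2}\bA \bz_{T_k}$ and expanding $\|\bA \bz_U\|_2^2$ as an inner product, each cross-term is bounded by the polarization identity $|\langle \bA \bz_U, \bA \bz_{T_k}\rangle| \leq \delta \|\bz_U\|_2 \|\bz_{T_k}\|_2$, which is legitimate because $K(U \cup T_k) \leq 3K(s) \leq K(\alpha s)$. After dividing by $\|\bz_U\|_2$ one obtains $\|\bz_\Lambda\|_2 \leq \|\bz_U\|_2 \leq \tau \|\bA \bz\|_2 + \frac{\delta}{1-\delta}\sum_{k \geq 2}\|\bz_{T_k}\|_2$ with $\tau = \sqrt{1+\delta}/(1-\delta)$. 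The decreasing sort then yields $\max_{T_k}|z_{\bm \nu}|/\omega_{\bm \nu} \leq \min_{T_{k-1}}|z_{\bm \mu}|/\omega_{\bm \mu} \leq \|\bz_{T_{k-1}}\|_{\omega,1}/K(T_{k-1})$ (the last inequality via an $\omega_{\bm \mu}^2$-weighted averaging), so that $\|\bz_{T_k}\|_2 \leq \sqrt{K(T_k)}\|\bz_{T_{k-1}}\|_{\omega,1}/K(T_{k-1})$; plugging in $K(T_k) \leq K(s)$ and $K(T_{k-1}) \geq K(s)/4$ gives $\|\bz_{T_k}\|_2 \leq 4\|\bz_{T_{k-1}}\|_{\omega,1}/\sqrt{K(s)}$, and summing over $k \geq 2$ with $\sum_{k\geq 1}\|\bz_{T_k}\|_{\omega,1} = \|\bz_{\Lambda^c}\|_{\omega,1}$ produces $\sum_{k \geq 2}\|\bz_{T_k}\|_2 \leq 4\|\bz_{\Lambda^c}\|_{\omega,1}/\sqrt{K(s)}$, which delivers the claim with $\rho = 4\delta/(1-\delta)$.

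The main obstacle is the treatment of the largest block $T_1$: a direct application of the lower RIP to $\bz_\Lambda$ alone would force $\|\bz_{T_1}\|_2$ into the tail sum with no predecessor block to control it; on the other hand, absorbing $T_1$ into $U$ as above imposes the constraint $K(U \cup T_k) \leq K(\alpha s)$ for all $k \geq 2$, i.e., the strict inequality $3K(s) \leq K(\alpha s)$. This is precisely the growth estimate supplied by Lemma~\ref{LemmaGrowthKs} and its succeeding remark, and it is what allows the present argument to bypass the large-$s$ hypothesis imposed in \cite{RW15}.
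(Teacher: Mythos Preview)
Your argument is correct. The paper's own proof is much terser: it recognizes \eqref{nullspace} as the weighted robust null space property of order $K(s)$, invokes \cite[Theorem 4.5]{RW15} (by inspection of its proof) to reduce the claim to the conditions $\delta_{\omega,3K(s)}<1/3$ and $K(s)\ge(4/3)\max_{\bm\nu\in\cH_s}\omega_{\bm\nu}^2$, and then verifies these via \eqref{K2sKsRemark} and \eqref{lemma3KsK2s} respectively (together with $\delta_{\omega,K(\alpha s)}=\delta_{\ell,\alpha s}$ from Remark~\ref{remarkLowerRIPweightedRIP}). What you have written is essentially the proof of \cite[Theorem 4.5]{RW15} unrolled in the present setting: the greedy block partition, the absorption of $T_1$ into $U$, the off-diagonal RIP bound on $\langle\bA\bz_U,\bA\bz_{T_k}\rangle$, and the telescoping control of $\|\bz_{T_k}\|_2$ are exactly the ingredients of that theorem, and you use the very same two inequalities from Lemma~\ref{LemmaGrowthKs} and its remark at the same points. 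So the mathematical content is identical; the difference is only that you give a self-contained derivation rather than a citation, which has the advantage of making transparent precisely where $3K(s)\le K(\alpha s)$ and $\max_{\bm\nu}\omega_{\bm\nu}^2\le(3/4)K(s)$ enter.
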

\begin{proof}
We have $K(s) \geq K(\Lambda)= \sum_{{\bm \nu}\in\Lambda} \omega_{\bm \nu}^2 $, 
therefore proving \eqref{nullspace} is equivalent to showing that $\bA$ satisfies 
the weighted robust null space property of order $K(s)$ with constants $\rho$ 
and $\tau$, see \cite[Definition 4.1]{RW15}. In view of \cite[Theorem 4.5]{RW15}
and by an inspection of its proof, this can follow with $\rho$ and $\tau$ as in {our} 
proposition if $\bA$ {satisfies} weighted RIP with $\delta_{\omega,3K(s)}<1/3$ for 
$K(s)>(4/3)\sup_{{\bm \nu}\in\cH_s} \omega_{\bm \nu}^2$. 
In view of \eqref{K2sKsRemark}, $3K(s)\leq K(\alpha s)$ 
for both Legendre and Chebyshev systems. Since $\delta_{\omega,t}$ is increasing 
in $t$, $\delta_{\omega,3K(s)} \leq \delta_{\omega,K(\alpha s)} = \delta_{\ell,\alpha s}<1/3$, see 
Remark \ref{remarkLowerRIPweightedRIP} for the equality. We also have from 
\eqref{lemma3KsK2s} that $K(s)>(4/3) \sup_{{\bm \nu}\in\cH_s} \omega_{\bm \nu}^2$ for any 
$s>2$ for both systems, which completes the proof.
\end{proof}

Combining \eqref{RIP_cond4} and Proposition \ref{prop:nullspace} 
yields the uniform recovery of $g$ up to the best lower $s$-term error and the tail bound.
\begin{theorem}
\label{theorem:lower_ell1}
Let $s\ge 2$, $\cJ=\cH_s$ and $N=\#(\cH_s)$. Consider a number of samples 
\begin{align}
\label{conditionMLower}
m \ge  C s^{1+\beta'}   \max \{  \log^2 ( s ) 
(\log(s) + d), \, \log(s) \log (\log(s)/\gamma) \} ,  
\end{align}
where $\beta' = \frac{\log 3}{\log 2}-1$ if $\{\Psi_{\bm \nu}\}$ is a Chebyshev system and 
$\beta' = 1$ if $\{\Psi_{\bm \nu}\}$ is a Legendre system. Let ${\bm y}_1,\by_2, \dots ,{\bm y}_m$ 
be drawn independently from the orthogonalization measure $\varrho$ associated 
to $\{\Psi_{\bm \nu}\}$ and $\bA \in \C^{m \times N}$ be the associated normalized 
sampling matrix as in \eqref{intro_system}. 
Then, with probability exceeding $1-\gamma$, the following holds for all functions 
$g = \sum_{{\bm \nu} \in \cF} c_{\bm \nu} \Psi_{\bm \nu}$: {Given $\tilde \bg$, $\bm{\xi}$ as in \eqref{intro_system}-\eqref{intro_system2} and $\eta$ satisfying $\|\bm \xi\|_2 \le \frac{\eta}{\sqrt{m}}$}, 
the function $g^\# = \sum_{{\bm \nu} \in \cH_{s}} c_{\bm \nu}^\# \Psi_{\bm \nu}$, with $\bc^\# = (c_{\bm \nu}^\#)_{{\bm \nu} \in \cH_{s}}$ 
solving \eqref{form:weighted_ell1}, satisfies
\begin{align*}
\| g - g^\#\|_{\infty}   \le \|g - g^\#\|_{\omega,1} & \le c_1 \sigma_s^{(\ell)}(g)_{\omega,1} + d_1  \eta \sqrt{\frac{K(s)}{{m}}},
\\
\| g - g^\#\|_{2}  &  \le c_2 \frac{\sigma_s^{(\ell)}(g)_{\omega,1}}{\sqrt{K(s)}} + d_2  \eta \sqrt{\frac{1}{{m}}}.
\end{align*}
Above, $c_1,c_2,d_1$ and $d_2$ are universal constants. 
\end{theorem}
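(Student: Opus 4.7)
The plan is to combine Theorem \ref{theorem:lower-RIP} (lower RIP for bounded orthonormal systems on the Hyperbolic Cross), Proposition \ref{prop:nullspace} (the weighted robust null space property on $\cH_s$), and a standard weighted $\ell_1$-recovery argument along the lines of \cite[Theorem 6.1]{RW15}. The three ingredients slot together cleanly, so the work is mostly bookkeeping of constants and the size-of-$K(s)$ estimates.

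First, I would apply Theorem \ref{theorem:lower-RIP} at order $\alpha s$, choosing $\delta$ as a small numerical constant so that $13\delta < 1/3$; this provides $\delta_{\ell, \alpha s} < 1/3$, i.e.\ the hypothesis \eqref{cond_small_delta} of Proposition \ref{prop:nullspace}. To convert the sample-complexity condition \eqref{num_sample} into the claimed condition \eqref{conditionMLower} I use Lemma \ref{lemmaSharpBoundKs}, which gives $K(\alpha s)\lesssim s^{1+\beta'}$ with $\beta'=\frac{\log 3}{\log 2}-1$ (Chebyshev) or $\beta'=1$ (Legendre), together with the bound $\#(\cH_s)\leq 2s^{3}4^d$ so that $\log N\lesssim \log s + d$. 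All $\alpha$-dependent and $\delta$-dependent factors are absorbed into the universal constant $C$.

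Second, granted the null space property \eqref{nullspace} from Proposition \ref{prop:nullspace} (with $\rho<1$ and $\tau$ depending only on $\delta$), I would carry out the standard weighted $\ell_1$-recovery chain. Writing $\bv := \bc^{\#}-\bc$, feasibility of $\bc$ gives $\|\bm A\bv\|_2 \leq \|\bm A\bc^\#-\tilde\bg\|_2+\|\bm\xi\|_2\leq 2\eta/\sqrt m$. Let $\Lambda^\ast\subset\cH_s$ be a lower set of cardinality $\leq s$ realizing $\sigma_s^{(\ell)}(g)_{\omega,1}$; since $\|\bc^\#\|_{\omega,1}\leq\|\bc\|_{\omega,1}$ (optimality), the usual manipulation yields the cone-type bound $\|\bv_{(\Lambda^\ast)^c}\|_{\omega,1}\leq \|\bv_{\Lambda^\ast}\|_{\omega,1}+2\sigma_s^{(\ell)}(g)_{\omega,1}$. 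Combining this with the null space property and the Cauchy--Schwarz inequality $\|\bv_{\Lambda^\ast}\|_{\omega,1}\leq \sqrt{K(\Lambda^\ast)}\,\|\bv_{\Lambda^\ast}\|_2 \leq \sqrt{K(s)}\,\|\bv_{\Lambda^\ast}\|_2$ closes the loop and delivers both the weighted $\ell_1$ bound and, after an additional invocation of the lower RIP on $\bv_{\Lambda^\ast}$, the $\ell_2$ bound. The stated $L^\infty$ estimate follows at once from $\|g-g^\#\|_\infty\leq \sum_{\bm\nu}|c_{\bm\nu}-c_{\bm\nu}^\#|\,\|\Psi_{\bm\nu}\|_{L^\infty}=\|g-g^\#\|_{\omega,1}$, using the specific weights $\omega_{\bm\nu}=\|\Psi_{\bm\nu}\|_{L^\infty}$.

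The main obstacle, and the reason this proof is not entirely routine, is the removal of the large-$s$ hypothesis that appears in \cite{RW15}: the weighted robust null space property is usually derived from a weighted RIP assumption plus an inequality $K(s)\gtrsim \max_{\bm\nu\in\cJ}\omega_{\bm\nu}^2$, which forces $s$ to be big enough. In our setting this technical condition is handed to us in Proposition \ref{prop:nullspace} thanks to Lemma \ref{LemmaGrowthKs}, which shows $K(s)\geq \tfrac{4}{3}\max_{\bm\nu\in\cH_s}\omega_{\bm\nu}^2$ for all $s\geq 2$, together with $K(\alpha s)\geq 3K(s)$ from \eqref{K2sKsRemark}. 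Once that is available, the chain of implications above is straightforward and the constants $c_1,c_2,d_1,d_2$ can be read off from the values of $\rho,\tau$ and the factor $2\eta/\sqrt m$.
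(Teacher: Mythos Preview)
Your proposal is correct and mirrors the paper's own argument: the paper states (just before Theorem \ref{theorem:lower_ell1}) that the proof follows by combining the lower-RIP sample-complexity bound \eqref{RIP_cond4}, Proposition \ref{prop:nullspace}, and the weighted $\ell_1$ recovery analysis of \cite[Theorem 6.1]{RW15}, with the large-$s$ restriction removed via Lemma \ref{LemmaGrowthKs} and \eqref{K2sKsRemark}. Your write-up in fact supplies more detail than the paper does; the only minor imprecision is that the $\ell_2$ bound comes from the null space property plus a block-partition (Stechkin-type) argument on $\bv_{(\Lambda^\ast)^c}$ rather than a separate invocation of the lower RIP.
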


\begin{remark}
In Theorem \ref{theorem:lower_ell1}, $\bm{\xi}$ is actually a random variable varying with the sampling points $({\bm y}_i)_{1\le i\le m}$. It can be shown however that $\|\bm \xi\|_2 \le {\|g_{\cH_{s}^c}\|_{\omega,1}}$ for every set of samples, thus $\eta$ can be set deterministically as $ \sqrt{m} \|g_{\cH_{s}^c}\|_{\omega,1} \le \eta$.
\end{remark}

\subsubsection{Given an exact estimate of the tail}

{In this section, we seek to prove 
a stronger error rate which is independent of tail bound.} It should be mentioned that a 
result of this type has been derived in \cite{RW15}, where the index set is conditioned 
by the weight as 
\begin{align}
\label{defineJ}
\cJ = \{{\bm\nu} \in\mathbb{N}_0^d: {\omega}_{\bm \nu}^2 \le C K(s)\}, 
\end{align}
and is used to the best weighted $K(s)$-term reconstruction. 
{Note that this is comparable to best lower 
$s$-term in our setting, as a lower set of cardinality $s$ has a weighted cardinality 
approximately equal to $K(s)$ (see Remark \ref{remarkLowerRIPweightedRIP}}). 
In our work, however, $\cJ$ is specified 
instead to be the smallest set that contains the supports of all downward closed sets of cardinality $s$, {i.e., $\cJ=\cH_{s}$}. As shown in Lemma \ref{LemmaGrowthKs}, $\omega_{\bm \nu}^2 \le \frac{~3~}4 K(s)$ for all ${\bm \nu}\in \cH_{s}$. The converse, i.e., 
$\omega_{\bm \nu}^2 \ge CK(s)$ for all $\bm{\nu}\notin \cH_{s}$, however, does not hold. {Indeed}, for our Chebyshev weights, definition \eqref{defineJ} would lead to a $\cJ$ with 
infinite cardinality. 
Therefore, {$\cH_{s}$
represents a significantly smaller index set than those considered in \cite{RW15}. Nonetheless, the recovery of $g$ up to the best lower $s$-term error is still available without condition \eqref{defineJ}, provided that 
$c_{\bm \nu}/\omega_{\bm \nu}$ is small in $\cH_s^c$.} 
{To clarify this assumption, for $g = \sum_{{\bm \nu} \in \cF} c_{\bm \nu} \Psi_{\bm \nu}$, we introduce a parameter $\lambda \ge 0$ such that}

\begin{align}
\max_{{\bm \nu}\in \cH_s^c} \frac{|c_{\bm \nu}|}{\omega_{\bm \nu}} 
\le (1 + \lambda) \min_{{\bm \nu}\in \tilde{\cJ}} \frac{|c_{\bm \nu}|}{\omega_{\bm \nu}}
\label{assump:smooth}
\end{align}
for some set  ${\tilde\cJ} \subset \cH_s$ with $K({\tilde\cJ}) \ge 2K(s)$, or equivalently,
\begin{align}
\label{assump:smooth2}
\lambda := \min_{\substack{ {\tilde\cJ} \subset \cH_s \\ K({\tilde\cJ}) \ge 2K(s) }} \left(\frac{\max\limits_{{\bm \nu}\in \cH_s^c} {|c_{\bm \nu}|}/{\omega_{\bm \nu}}}{  \min\limits_{{\bm \nu}\in \tilde{\cJ}} {|c_{\bm \nu}|}/{\omega_{\bm \nu}}}\right) -1. 
\end{align} 
Many subsets $\tilde{\cJ}$ of $\cH_s$ with cardinality $2s$ satisfy $K(\tilde{\cJ}) \ge 2K(s)$, see \eqref{K2sKs}. Thus, the minimum in the right hand side of \eqref{assump:smooth} can be taken over only $2s$ multi-indices in $\cH_s$. 
On the other hand, for function $g$ whose expansion coefficients decay fast, it 
is reasonable to assume small $\max\limits_{{\bm \nu}\in \cH_s^c} \frac{|c_{\bm \nu}|}{\omega_{\bm \nu}}$, due to small $c_{\bm \nu}$ and possibly big $\omega_{\bm \nu}$ for ${\bm \nu} \in \cH_s^c$. As a result, $\lambda$ is expected to be small in this effort. 

{We first need an estimate of 
$\|\bc_{\cH_s^c}\|_2$. The choice $\cJ = \cH_s$ is not essential in this development, for which reason we state the result for general $\|\bc_{\cJ^c}\|_2$. }

\begin{lemma}
\label{lemma:est_suppl}
Let $\bc = (c_{\bm \nu})_{{\bm \nu}\in \cF}$ and $\cJ$ be a subset of 
$\cF$. For all $s\ge 1$, there 
holds
\begin{align}
\|\bc_{\cJ^c}\|_2 \le \frac{\|\bc_{\cJ^c}\|_{\omega,1}}{\sqrt{K(s)}} + 
\sqrt{K(s)} \max_{{\bm \nu}\in \cJ^c} \frac{|c_{\bm \nu}|}{\omega_{\bm \nu}}.
\end{align}
\end{lemma}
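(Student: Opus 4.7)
\textit{Proof plan for Lemma \ref{lemma:est_suppl}.} The plan is to use the definition of $M := \max_{{\bm \nu}\in \cJ^c} |c_{\bm \nu}|/\omega_{\bm \nu}$ to replace one factor of $|c_{\bm \nu}|$ in $|c_{\bm \nu}|^2$ by a weighted $\ell_1$ contribution, converting $\|\bc_{\cJ^c}\|_2^2$ into a product of $M$ and $\|\bc_{\cJ^c}\|_{\omega,1}$. A judicious application of the AM--GM inequality then splits this product into the two additive terms appearing in the statement, with $\sqrt{K(s)}$ entering symmetrically.

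First, I would note that by the very definition of $M$, we have $|c_{\bm \nu}| \leq M \omega_{\bm \nu}$ for every ${\bm \nu}\in\cJ^c$, which gives the pointwise bound $|c_{\bm \nu}|^2 \leq M\,\omega_{\bm \nu}\,|c_{\bm \nu}|$. Summing this inequality over ${\bm \nu}\in\cJ^c$ yields
\begin{equation*}
\|\bc_{\cJ^c}\|_2^2 \;=\; \sum_{{\bm \nu}\in\cJ^c} |c_{\bm \nu}|^2 \;\leq\; M \sum_{{\bm \nu}\in\cJ^c} \omega_{\bm \nu}\,|c_{\bm \nu}| \;=\; M\,\|\bc_{\cJ^c}\|_{\omega,1},
\end{equation*}
and hence $\|\bc_{\cJ^c}\|_2 \leq \sqrt{M\,\|\bc_{\cJ^c}\|_{\omega,1}}$.

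The final step is the arithmetic--geometric mean inequality $2\sqrt{ab}\leq a+b$ applied to $a = \sqrt{K(s)}\,M$ and $b = \|\bc_{\cJ^c}\|_{\omega,1}/\sqrt{K(s)}$, whose product is exactly $M\,\|\bc_{\cJ^c}\|_{\omega,1}$. This produces
\begin{equation*}
\|\bc_{\cJ^c}\|_2 \;\leq\; \sqrt{a\,b} \;\leq\; \frac{a+b}{2} \;\leq\; \sqrt{K(s)}\,M + \frac{\|\bc_{\cJ^c}\|_{\omega,1}}{\sqrt{K(s)}},
\end{equation*}
which is the claimed inequality. There is no real obstacle here: the only ``choice'' in the argument is the pairing inside AM--GM, and it is forced by the shape of the target bound, with $K(s)$ playing the role of a free positive parameter that separates the ``bulk'' and ``peak'' contributions of $\bc_{\cJ^c}$. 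The result is sharp up to the factor $2$ absorbed in passing from $(a+b)/2$ to $a+b$.
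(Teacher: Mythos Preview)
Your proof is correct and considerably simpler than the paper's. You extract one factor of $|c_{\bm\nu}|$ using the uniform bound $|c_{\bm\nu}|\le M\omega_{\bm\nu}$, obtain $\|\bc_{\cJ^c}\|_2^2\le M\,\|\bc_{\cJ^c}\|_{\omega,1}$, and then split the square root by AM--GM with the free parameter $K(s)$; in fact you even prove the statement with an extra factor $\tfrac12$ on the right, which you discard.

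The paper takes a substantially longer route: it first separates $\cJ^c$ into the ``heavy-weight'' part $\cJ'=\{\omega_{\bm\nu}^2\ge K(s)\}$, handled directly, and the ``light-weight'' part $\cJ''$, which is then partitioned into blocks $\cJ_l$ with $K(\cJ_l)\asymp K(s)$ after sorting by $|c_{\bm\nu}|/\omega_{\bm\nu}$. Each block is estimated via a weighted Stechkin-type inequality (an extension of \cite[Lemma~6.14]{FouRau13}), and the oscillation terms $r_l^+-r_l^-$ telescope to give the $\sqrt{K(s)}\,M$ contribution. This machinery is the standard way such bounds are proved in the compressed-sensing literature and generalizes naturally (e.g., to intermediate $\ell_p$ norms or to sharper tail estimates when more structure on the decay of $|c_{\bm\nu}|/\omega_{\bm\nu}$ is available), but for the specific inequality stated here your two-line argument is both shorter and yields a slightly better constant.
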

\begin{proof}
{
We introduce $\cJ' = \{{\bm \nu}\in \cJ^c: \omega^2_{\bm \nu} \ge K(s) \}$ 
and $\cJ'' = \cJ^c \setminus \cJ'$. By definition of $\cJ'$, we have 
$$
\|\bc_{\cJ'}\|_2 
\leq \sqrt{\sum_{{\bm \nu}\in\cJ'} \frac{\omega_{\bm \nu}^2 c_{\bm \nu}^2}{K(s)}} 
\leq \frac{\|\bc_{\cJ'}\|_{\omega,1}}{\sqrt{K(s)}}.
$$ 
Since $\|\bc_{\cJ^c}\|_2\leq \|\bc_{\cJ'}\|_2+\|\bc_{\cJ''}\|_2$
and $\|\bc_{\cJ^c}\|_{\omega,1}= \|\bc_{\cJ'}\|_{\omega,1}+\|\bc_{\cJ''}\|_{\omega,1}$,
one only needs to show that 
$$
\|\bc_{\cJ''}\|_2 \le \frac{\|\bc_{\cJ''}\|_{\omega,1}}{\sqrt{K(s)}}+\sqrt{K(s)} 
\max_{{\bm \nu}\in \cJ''} \frac{|c_{\bm \nu}|}{\omega_{\bm \nu}}.
$$
We order $\cJ''$ according to a 
non-increasing order of $(|c_{\bm \nu}|/\omega_{\bm \nu})_{{\bm \nu}\in \cJ''}$ 
and then partition $\cJ''$ as $\cJ''=\cup_{l=1}^L \cJ_l$ 
where we inductively choose the sets $\cJ_l$ according to: 
$\cJ_0=\emptyset$; for $l\geq1$ and $\cJ_0,\dots,\cJ_{l-1}$ 
having been built, we set $\cM_l= \cJ''\setminus \{\cJ_0\cup\dots\cup\cJ_{l-1}\} $, 
and let $\cJ_l= \cM_l$ if $K(\cM_l) < K(s)$, or be {a subset containing largest elements of} $\cM_l$ such that $K(s) \leq K(\cJ_l) \leq 4K(s)$.
If the induction does not terminate, $L=\infty$ in which 
case
$$
K(s) \le  K(\cJ_l) \le 4 K(s), \quad\quad \quad \forall \,l\geq1.
$$
If the induction terminates, $L<\infty$ and we have only $K(\cJ_L)<K(s)$.

Now, we denote by $r_l^+$ and $r_l^-$ the largest and smallest 
 entries in $(|c_{\bm \nu}|/\omega_{\bm \nu})_{{\bm \nu}\in \cJ_l}$. An easy 
 extension of \cite[Lemma 6.14]{FouRau13} yields for all $1\leq l<L$:
\begin{align*}
\|\bc_{\cJ_l}\|_2 \le \frac{\|\bc_{\cJ_l}\|_{\omega,1}}{\sqrt{K(\cJ_l)}} + 
\frac{\sqrt{K(\cJ_l)}}4 (r_l^+ - r_l^-) \le \frac{\|\bc_{\cJ_l}\|_{\omega,1}}{\sqrt{K(s)}} 
+ \sqrt{K(s)}(r_l^+ - r_l^-).
\end{align*}
In the case $L<\infty$, we have 
$ \|\bc_{\cJ_L}\|_2 \leq r_{L}^{+} \sqrt{K(\cJ_L)} \leq r_{L}^{+} \sqrt{K(s)}$. {There follows in both cases $L<\infty$ and $L=\infty$:}
$$
\|\bc_{\cJ''}\|_2 
\leq \sum_{l=1}^L \|\bc_{\cJ_l}\|_2
\leq \frac{\|\bc_{\cJ''}\|_{\omega,1}}{\sqrt{K(s)}} +\sqrt{K(s)} r_1^+.
$$
Since $r_1^+ \leq \max\limits_{{\bm \nu}\in \cJ''} \frac{|c_{\bm \nu}|}{\omega_{\bm \nu}}$ the 
proof is complete.
}
\end{proof}

{We are now ready to state and prove the recovery guarantee, 
assuming an exact estimate of $g_{\cH_{s}^c}$ exists.}
\begin{theorem}
\label{theorem:interp}
Let $s\ge 2$, $\cJ=\cH_s$, $N=\#(\cH_s)$, and $m$ as in \eqref{conditionMLower}. 
Consider a function $g = \sum_{{\bm \nu} \in \cF} c_{\bm \nu} \Psi_{\bm \nu}$ 
with $\|g\|_{\omega,1} < \infty$, and $\lambda$ defined as in \eqref{assump:smooth2}. 
We introduce 
$E_{g}:=\max\{\sqrt2 \|g_{\cH_{s}^c}\|_2,\frac{ \sqrt2\|g_{\cH_{s}^c}\|_{\omega,1}}{\sqrt{K(s)}}\}$
and let $\eta$ be such 
that
\begin{align}
\label{cond_noise}
E_{g}
\le \frac{\eta}{\sqrt{m}} \le  
(1+ \varepsilon)
E_{g}.
\end{align}
for some $\varepsilon >0$. Then, with probability exceeding $1-\gamma$, the function 
$g^\#= \sum_{ {\bm \nu} \in \cH_{s}} c_{\bm \nu}^\# \Psi_{\bm \nu}$, 
with the vector $\bc^\# = (c_{\bm \nu}^\#)_{{\bm \nu} \in \cH_{s}}$ solving \eqref{form:weighted_ell1}, satisfies 
\begin{gather}
\label{interp1}
\begin{aligned}
\| g - g^\#\|_{\infty}   \le 
\|g - g^\#\|_{\omega,1} & 
\le (1 + \lambda + \varepsilon) c_3 \sigma_s^{(\ell)}(g)_{\omega,1},
\\
\| g - g^\#\|_{2}  & 
\le (1 + \lambda + \varepsilon)  c_4 \frac{\sigma_s^{(\ell)}(g)_{\omega,1}}{\sqrt{K(s)}}.
\end{aligned}
\end{gather}
Here, $c_3$ and $c_4$ are universal constants. 
\end{theorem}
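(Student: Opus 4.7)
The plan is to invoke Theorem \ref{theorem:lower_ell1} to obtain a tail-dependent error estimate, and then use the smoothness assumption quantified by $\lambda$ together with Lemma \ref{lemma:est_suppl} to bound the tail quantities $\|g_{\cH_{s}^c}\|_2$ and $\|g_{\cH_{s}^c}\|_{\omega,1}$ by the best lower $s$-term error $\sigma_s^{(\ell)}(g)_{\omega,1}$ itself. Combined with the upper constraint on $\eta$ in \eqref{cond_noise}, this will remove the explicit tail term from the bounds.

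First, I would observe that since any lower set $\Lambda^*\subset\cF$ of cardinality $s$ is automatically contained in $\cH_s$, we have $\cH_s^c\subseteq(\Lambda^*)^c$ and hence
\[
\|g_{\cH_s^c}\|_{\omega,1} \;=\; \|\bc_{\cH_s^c}\|_{\omega,1} \;\le\; \|\bc_{(\Lambda^*)^c}\|_{\omega,1} \;=\; \sigma_s^{(\ell)}(g)_{\omega,1},
\]
taking $\Lambda^*$ to be a minimizer. For the $\ell_2$ tail, Lemma \ref{lemma:est_suppl} applied with $\cJ=\cH_s$ yields
\[
\|\bc_{\cH_s^c}\|_2 \;\le\; \frac{\sigma_s^{(\ell)}(g)_{\omega,1}}{\sqrt{K(s)}} \;+\; \sqrt{K(s)} \max_{{\bm \nu}\in\cH_s^c} \frac{|c_{\bm \nu}|}{\omega_{\bm \nu}}.
\]
The remaining task is to dominate $\max_{{\bm \nu}\in\cH_s^c}|c_{\bm \nu}|/\omega_{\bm \nu}$ by $\sigma_s^{(\ell)}(g)_{\omega,1}/K(s)$ (up to a factor $1+\lambda$).

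The key observation, and the main technical point, is that any $\tilde{\cJ}\subset\cH_s$ with $K(\tilde{\cJ})\ge 2K(s)$ cannot be contained in any lower set of cardinality $s$; in particular, $\tilde{\cJ}\setminus\Lambda^*$ is nonempty and has weighted size at least $K(\tilde{\cJ})-K(\Lambda^*)\ge 2K(s)-K(s)=K(s)$. Writing
\[
\sigma_s^{(\ell)}(g)_{\omega,1} \;\ge\; \sum_{{\bm \nu}\in\tilde{\cJ}\setminus\Lambda^*} \omega_{\bm \nu}^2\,\frac{|c_{\bm \nu}|}{\omega_{\bm \nu}} \;\ge\; \min_{{\bm \nu}\in\tilde{\cJ}}\frac{|c_{\bm \nu}|}{\omega_{\bm \nu}}\cdot K(\tilde{\cJ}\setminus\Lambda^*) \;\ge\; K(s)\min_{{\bm \nu}\in\tilde{\cJ}}\frac{|c_{\bm \nu}|}{\omega_{\bm \nu}},
\]
and minimizing the right-hand side over admissible $\tilde{\cJ}$, the definition \eqref{assump:smooth2} gives
\[
\max_{{\bm \nu}\in\cH_s^c}\frac{|c_{\bm \nu}|}{\omega_{\bm \nu}} \;\le\; (1+\lambda)\,\frac{\sigma_s^{(\ell)}(g)_{\omega,1}}{K(s)}.
\]
Substituting into the Lemma \ref{lemma:est_suppl} bound produces $\|g_{\cH_s^c}\|_2\le(2+\lambda)\sigma_s^{(\ell)}(g)_{\omega,1}/\sqrt{K(s)}$.

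Combining these two tail estimates with the definition of $E_g$ shows that $E_g\lesssim(1+\lambda)\sigma_s^{(\ell)}(g)_{\omega,1}/\sqrt{K(s)}$, and then the upper bound $\eta/\sqrt{m}\le(1+\varepsilon)E_g$ gives $\eta\sqrt{K(s)/m}\lesssim(1+\varepsilon)(1+\lambda)\sigma_s^{(\ell)}(g)_{\omega,1}$ and $\eta/\sqrt{m}\lesssim(1+\varepsilon)(1+\lambda)\sigma_s^{(\ell)}(g)_{\omega,1}/\sqrt{K(s)}$. The lower bound in \eqref{cond_noise} ensures that the hypothesis $\|\bm\xi\|_2\le\eta/\sqrt{m}$ of Theorem \ref{theorem:lower_ell1} holds, so I may substitute the $\eta$-estimates into the conclusion of that theorem, collecting constants and using $(1+\varepsilon)(1+\lambda)\lesssim 1+\lambda+\varepsilon+\lambda\varepsilon\lesssim1+\lambda+\varepsilon$ under the standing assumption that these parameters are of moderate size, to obtain \eqref{interp1}. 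The main obstacle is the tail-min estimate in the middle paragraph, which is where the restriction $K(\tilde{\cJ})\ge 2K(s)$ in \eqref{assump:smooth} is essential: a smaller threshold would not give room to subtract $K(\Lambda^*)$ and still retain a factor proportional to $K(s)$.
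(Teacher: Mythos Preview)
Your overall strategy matches the paper's: apply Theorem \ref{theorem:lower_ell1}, then absorb the tail contribution into $\sigma_s^{(\ell)}(g)_{\omega,1}$ using the definition of $\lambda$. In particular, your bound
\[
K(s)\max_{{\bm \nu}\in\cH_s^c}\frac{|c_{\bm \nu}|}{\omega_{\bm \nu}} \;\le\; (1+\lambda)\,\|\bc_{\tilde\cJ\setminus\Lambda^*}\|_{\omega,1} \;\le\; (1+\lambda)\,\sigma_s^{(\ell)}(g)_{\omega,1}
\]
is exactly the key estimate \eqref{est3} in the paper's proof, and your reasoning for it is correct.

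However, there is a genuine gap in the sentence ``The lower bound in \eqref{cond_noise} ensures that the hypothesis $\|\bm\xi\|_2\le\eta/\sqrt{m}$ of Theorem \ref{theorem:lower_ell1} holds.'' This is not automatic. The quantity $\|\bm\xi\|_2^2 = \frac{1}{m}\sum_{i=1}^m |g_{\cH_s^c}(\by_i)|^2$ is a random variable depending on the samples, whereas $E_g$ is deterministic. The only deterministic bound available is $\|\bm\xi\|_2 \le \|g_{\cH_s^c}\|_{\omega,1}$, which in general is much larger than $E_g$ (by a factor $\sqrt{K(s)}$). To get $\|\bm\xi\|_2 \le E_g$ you need a concentration argument: the paper applies Bernstein's inequality to the mean-zero variables $\widehat\xi_i^2 - \E[\widehat\xi_i^2]$, splitting into the two cases $\|\bc_{\cH_s^c}\|_2 \gtrless \|\bc_{\cH_s^c}\|_{\omega,1}/\sqrt{K(s)}$, and obtains $\frac{1}{m}\|\widehat{\bm\xi}\|_2^2 \le E_g^2$ with probability at least $1-\exp(-3m/(8K(s)))$. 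This is precisely where the factor $\sqrt{2}$ in the definition of $E_g$ comes from, and it is the reason the conclusion of the theorem is probabilistic (and why the sample-size condition \eqref{conditionMLower} must also absorb a $K(s)\log(1/\gamma)$ term). Without this step your argument does not close: you have no guarantee that $\bc$ is feasible for \eqref{form:weighted_ell1}, so Theorem \ref{theorem:lower_ell1} cannot be invoked.

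A minor point: your final absorption $(1+\varepsilon)(1+\lambda)\lesssim 1+\lambda+\varepsilon$ is not valid without a smallness assumption on $\lambda\varepsilon$; the paper's proof has the same apparent looseness, but you should be aware that the honest conclusion carries a factor $(1+\varepsilon)(1+\lambda)$.
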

\begin{proof}
We introduce $\widehat{\bm\xi} = (\widehat{\xi}_i)_{1\le i\le m} = (g_{\cH_{s}^c}({\bm y}_i))_{1\le i\le m}$.
Since ${\bm y}_i$ are i.i.d. random variables with respect 
to $\varrho$, by Lemma \ref{lemma:est_suppl},
\begin{align}
\E[\widehat{\xi}_i^2] = \|{\bc}_{\cH_{s}^c}\|_2^2 \le 
\frac{2 \|{\bc}_{\cH_{s}^c}\|^2_{\omega,1}}{{K(s)}} + 2 {{K(s)}} M_{\cH_{s}^c}^2,
\label{inequalityInProof}
\end{align}
where we have defined $M_{\cH_{s}^c} := \max_{{\bm \nu} \in \cH_{s}^c} \frac{|c_{\bm \nu}|}{\omega_{\bm \nu}}$. 
We consider two cases: 

$\bullet$ Case 1: $\|\bc_{\cH_s^c}\|_2 > \frac{ \|\bc_{\cH_s^c}\|_{\omega,1}}{\sqrt{K(s)}}$. 
Since $| \widehat{\xi}_i | \le  \|\bc_{\cH_s^c}\|_{\omega,1} < \sqrt{K(s)}\|{\bc}_{\cH_s^c}\|_2$, it holds 
$$
\E\[(\widehat{\xi}_i^2 -\E[\widehat{\xi}_i^2])^2\] 
\leq \E[\widehat{\xi}_i^4] 
\leq  K(s)\|{\bc}_{\cH_s^c}\|_2^2~\E [\widehat{\xi}_i^2] 
\leq K(s)\|{\bc}_{\cH_s^c}\|_2^4.  
$$
Applying Bernstein's inequality with the mean-zero random variable 
$\widehat{\xi}_i^2 -\E[\widehat{\xi}_i^2]$ 
$$
\P\( \sum_{i=1}^m \frac{\widehat{\xi}_i^2}{m}    - \|{\bc}_{\cH_s^c}\|_2^2  \ge \kappa  \) \le \exp\(- \frac{m\kappa^2/2}{K(s)\|{\bc}_{\cH_s^c}\|_2^4 + \frac{\kappa}{3} K(s) \|{\bc}_{\cH_s^c}\|_{2}^2  }\)\!.
$$
Choose $\kappa = \|{\bc}_{\cH_s^c}\|_2^2  $, there follows
$$
\P\(  \sum_{i=1}^m \frac{\widehat{\xi}_i^2}{m}   \ge 2 \|{\bc}_{\cH_s^c}\|_2^2 \) \le \exp\(-\frac{3m}{8 K(s)}\).
$$

$\bullet$ Case 2: $\|{\bc}_{\cH_s^c}\|_2 \le \frac{ \|{\bc}_{\cH_s^c}\|_{\omega,1}}{\sqrt{K(s)}}$. 
Similarly to \cite{RW15}, by Bernstein's inequality,
$$
 \P\(  \sum_{i=1}^m \frac{\widehat{\xi}_i^2}{m} \ge \frac{ 2 \|\bc_{\cH_s^c}\|^2_{\omega,1}}{K(s)}\) 
 \le 
 \P\(  \sum_{i=1}^m \frac{\widehat{\xi}_i^2}{m} - \|{\bc}_{\cH_s^c}\|_2^2 
 \ge \frac{ \|\bc_{\cH_s^c}\|^2_{\omega,1}}{K(s)}  \)  
 \le \exp\(-\frac{3m}{8K(s)}\).
$$

In both cases, given that 
$m\ge (8/3) s^{1+\beta'} \log(1/\gamma) \geq
(8/3) K(s) \log(1/\gamma)$,
then with probability 
exceeding $1-\gamma$, 
$$
\frac1m\|\widehat{\bm \xi}\|_2^2 \le 
\max\Big\{2 \|\bc_{\cH_s^c}\|_2^2,\frac{ 2 \|\bc_{\cH_s^c}\|^2_{\omega,1}}{K(s)} \Big\} 
\le\frac{ 4 \|\bc_{\cH_s^c}\|^2_{\omega,1}}{K(s)} +  4 K(s) M_{\cH_s^c}^2,
$$
where the second inequality follows from \eqref{inequalityInProof}.
Under the condition \eqref{cond_noise}, an application of Theorem 
\ref{theorem:lower_ell1} yields 
\begin{align}
\label{est2}
\|g - g^\#\|_{\omega,1} & \le c_1 \sigma_s^{(\ell)}({\bc})_{\omega,1} + (1+ \varepsilon) d_1 \|{\bc}_{\cH_s^c}\|_{\omega,1} + (1+ \varepsilon) d_1 K(s) {M_{\cH_s^c}}. 
\end{align}
Let {${\Lambda^*}$ denote the support of best lower $s$-term approximation of $g$ in $\ell_{{\omega,1}}$-norm, $\tilde{\cJ}$ be determined by \eqref{assump:smooth}-\eqref{assump:smooth2}.} For every $ {\bm \nu} \in \hat{\cJ} := \tilde{\cJ} \setminus \Lambda^{*}$, there holds 
$\omega^2_{\bm \nu} M_{\cH_s^c} \le  (1 + \lambda) \omega_{\bm \nu}^2 
\frac{|c_{\bm \nu}|}{\omega_{\bm \nu}} = (1+ \lambda) \omega_{\bm \nu} |c_{\bm \nu}|$. Summing these over 
$ \hat \cJ $ gives 
\begin{align}
\label{est3}
K(s)M_{\cH_s^c} \le K( \hat{\cJ} ) M_{\cH_s^c} \le (1 + \lambda) \|{\bc}_{\hat{\cJ}}\|_{\omega,1},
\end{align}
the first inequality coming from $K(\tilde{\cJ}) \ge 2K(s)$. We finally combine \eqref{est2} and \eqref{est3} to obtain \eqref{interp1}, which completes the proof.
\end{proof}

\subsection{Iterative hard thresholding}
{Thresholding algorithms for finding best $s$-term approximations consist of 
solving problems of the form}:
\begin{align}
\min \|\bm{Az} - \bm {\tilde{g}} \|_2\quad\quad \mbox{subject to}\quad\quad \supp(\bz) \le s. 
\end{align}
In this section, we study a specific thresholding approach for lower sparse recovery, which is 
guaranteed with the reduced query complexity \eqref{RIP_cond4}. The method solves the following 
constrained minimization problem:
\begin{align}
\label{eq:iht}
\min \|\bm{Az} - \bm {\tilde{g}} \|_2\quad\quad \mbox{subject to}\quad\quad \supp(\bz) \le s,\;\; \supp(\bz)\mbox{ lower}.
\end{align}
To achieve the minimum in \eqref{eq:iht} we first define the hard lower thresholding operator 
\begin{align}
H^{\ell}_s(\bz) = 
\argmin_{\substack{\supp(\tilde\bz)\;\; \text{lower} \\  |\supp(\tilde\bz)| =s }} 
\|\bz-\tilde\bz\|_2.
\end{align}
{Our goal is to approximate a smooth function of the form 
$g = \sum_{{\bm \nu} \in \cF} c_{\bm \nu} \Psi_{\bm \nu}$ by a sparse 
expansion supported in a predefined {polynomial subspace} $\mathbb{P}_{\cJ}$. We let 
$s$ be the sparsity level, $\cJ=\cH_s$ defined as in 
\eqref{hyperbolic_cross}, with ${\bm A} \in \C^{m\times N}$ and 
$\bm {\tilde{g}} \in \C^m$ be the normalized sampling matrix and vector 
of observations respectively. 
In what follows, we consider the following lower version 
of iterative hard thresholding algorithm \cite{BD09}.}
\begin{algorithm}
\label{alg:lower_ITH}
{\em (Iterative hard thresholding on lower sets)}
\vskip1pt
\begin{enumerate}
\item {\bf Initialization}: set the initial approximation $\bc^0$ as an $s$-sparse lower vector, e.g., 
$\bc^0 = {\bm 0}$.
\item {\bf Iteration}: repeat until a stopping criterion is met at $n = \overline{n}$: 
\begin{equation*}
\label{threshold}
{\bc}^{n+1} = H^{\ell}_s({\bc}^n + {\bm A}^*(\bm {\tilde{g}}  - \bm{A c}^n)).
\end{equation*} 
\item {\bf Output}: ${\bc}^{\#} = {\bc} ^{\overline{n}}$, and ${{g}}^{\#} \!=\! \sum_{{\bm \nu} \in \cH_s} {c}^{\#}_{\bm \nu} \Psi_{\bm \nu}$. 
\end{enumerate}
\end{algorithm}

We remark that a related thresholding approach entitled {\em iterative hard weighted thresholding} 
has been proposed in \cite{Jo13}. Similarly to that work, our method is 
designed for function reconstruction with preference to low-indexed terms. However, 
we focus on high-dimensional polynomial approximations and use the 
lower instead of weighted sparsity constraint for the thresholding operator. 
A surrogate of $H^{\ell}_s$ can exploit the lower set structure and is 
independent of weights, whose optimal choice is an important problem 
for iterative hard weighted thresholding. Below we provide the convergence result for 
Algorithm \ref{alg:lower_ITH}. 

\begin{theorem}
\label{theorem:lower_ITH}
Let $\cJ=\cH_s$. For $s\ge 2$, 
consider a number of samples as in \eqref{conditionMLower}. Let ${\bm y}_1, \ldots , {\bm y}_m$ be drawn independently from 
the orthogonalization measure $\varrho$ associated to $\{\Psi_{\bm \nu}\}$ and 
$\bA \in \C^{m\times N}$ the normalized sampling matrix. Then:
\begin{itemize}
\item[(i)]
with probability exceeding $1-\gamma$, the following holds for all $g= \sum_{{\bm \nu} \in \cF} c_{\bm \nu} \Psi_{\bm \nu}$: the function $g^n \!=\! \sum_{{\bm \nu} \in \cH_s} c^n_{\bm \nu} \Psi_{\bm \nu}$, with 
$\bc^n = (c^n_{\bm \nu})_{{\bm \nu} \in \cH_s}$ 
solving \eqref{threshold}, satisfies
\begin{align}
\label{ITH:error1}
 \|g^n -g_{\Lambda^*}\|_2 
 \le \rho^n \|g^0 - g_{\Lambda^*}\|_2 
 + \frac{\tau}{\sqrt{m}} \|{\bm {\tilde\xi}}\|_2,
\end{align} 
where ${\bm{\tilde \xi}} = ({\tilde \xi}_i)_{1\le i\le m} =$ $(g_{(\Lambda^*)^c}({\bm y}_i))_{1\le i \le m}$.

\item[(ii)]
for a fixed function $g = \sum_{{\bm \nu} \in \cF} c_{\bm \nu} \Psi_{\bm \nu}$ with 
$\|g\|_{\omega,1} < \infty$, 
with probability exceeding $1-\gamma$, for all $n\in \N$, the 
function $g^n = \sum_{{\bm \nu} \in \cH_s} c^n_{\bm \nu} \Psi_{\bm \nu} $, with 
$\bc^n = (c^n_{\bm \nu})_{{\bm \nu} \in \cH_s}$ solving \eqref{threshold}, satisfies
\begin{align}
\label{ITH:error2}
 \|g^n - g \|_2 \le \rho^n \|g^0 - g_{\Lambda^*}\|_2 +  (\tau\sqrt{2} + 1)  
 \sigma_s^{(\ell)}(g)_2 +  \frac{\tau \sqrt2}{\sqrt{K(s)}}{\|g_{(\Lambda^*)^c}\|_{\omega,1}}.
\end{align} 
\end{itemize}
Here, $\rho$ and $\tau$ are universal constants with $\rho<1$, {${\Lambda^*}$ is the support of best lower $s$-term approximation of $g$ in $\ell_{2}$-norm, and $ 
\sigma^{(\ell)}_s(g)_{2} = \|g_{(\Lambda^*)^c}\|_{2}$ is the best lower $s$-term error in $\ell_2$-norm.}
\end{theorem}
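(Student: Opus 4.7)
The plan is to adapt the classical iterative hard thresholding analysis of Blumensath--Davies to our downward-closed setting, using the lower RIP in place of the standard RIP. The central observation enabling this is that the union of any three lower sets of cardinality at most $s$ is again a lower set of cardinality at most $3s$, so if we apply Theorem \ref{theorem:lower-RIP} at order $3s$ then the lower RIP constant $\delta_{\ell,3s}$ can be made arbitrarily small under the sample complexity \eqref{conditionMLower} (absorbing the constant factor into $C$). Throughout, write $\bc^*=\bc_{\Lambda^*}$ for the coefficient vector of the best lower $s$-term approximation, and $\bm\xi = \tilde\bg - \bA\bc^*$ so that $\sqrt m\,\bm\xi = \tilde\bxi$.

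For part (i), I would first establish the quasi-projection property of $H^{\ell}_s$: since $\bc^*$ itself has lower $s$-sparse support, the minimality of $\bc^{n+1}=H^{\ell}_s(\bu)$ with $\bu:=\bc^n+\bA^*(\tilde\bg-\bA\bc^n)$ gives $\|\bu-\bc^{n+1}\|_2 \le \|\bu-\bc^*\|_2$. Setting $T^n := \Lambda^*\cup \supp(\bc^n)\cup \supp(\bc^{n+1})$, which is lower of cardinality at most $3s$, the entries of $\bu$ off $T^n$ cancel on both sides and one obtains $\|\bu_{T^n}-\bc^{n+1}\|_2\le\|\bu_{T^n}-\bc^*\|_2$. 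Triangle inequality then yields $\|\bc^{n+1}-\bc^*\|_2 \le 2\|\bu_{T^n}-\bc^*\|_2$. Writing $\bu-\bc^* = (\bI-\bA^*\bA)(\bc^n-\bc^*) + \bA^*\bm\xi$, restricting to $T^n$, and invoking the standard RIP inner-product consequences (here in their lower form: $\|((\bI-\bA^*\bA)\bv)_S\|_2\le \delta_{\ell,3s}\|\bv\|_2$ and $\|(\bA^*\bm\xi)_S\|_2\le \sqrt{1+\delta_{\ell,3s}}\|\bm\xi\|_2$ whenever $\supp(\bv)\cup S$ is lower with $K\le K(3s)$), one arrives at the one-step contraction
\begin{equation*}
\|\bc^{n+1}-\bc^*\|_2 \le 2\delta_{\ell,3s}\|\bc^n-\bc^*\|_2 + 2\sqrt{1+\delta_{\ell,3s}}\,\|\bm\xi\|_2.
\end{equation*}
Choosing the implicit constant in \eqref{conditionMLower} large enough to force $\rho:=2\delta_{\ell,3s}<1$, iteration over $n$ and use of $\|g^n-g_{\Lambda^*}\|_2=\|\bc^n-\bc^*\|_2$ (by orthonormality of $\{\Psi_{\bm\nu}\}$) yields \eqref{ITH:error1} with $\tau = 2\sqrt{1+\delta_{\ell,3s}}/(1-\rho)$.

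For part (ii), I would combine (i) with a Bernstein-type tail bound on $\frac{1}{m}\|\tilde\bxi\|_2^2$ modeled on the argument in Theorem \ref{theorem:interp}. Splitting into the two cases $\|\bc_{(\Lambda^*)^c}\|_2 \gtrless \|\bc_{(\Lambda^*)^c}\|_{\omega,1}/\sqrt{K(s)}$ and applying Bernstein to the i.i.d.\ random variables $g_{(\Lambda^*)^c}(\by_i)^2 - \E[g_{(\Lambda^*)^c}(\by_i)^2]$ gives, with probability at least $1-\gamma$,
\begin{equation*}
\tfrac{1}{m}\|\tilde\bxi\|_2^2 \le 2\max\Bigl\{\sigma_s^{(\ell)}(g)_2^{\,2},\,\|g_{(\Lambda^*)^c}\|_{\omega,1}^2/K(s)\Bigr\}.
\end{equation*}
Combining this with the triangle inequality $\|g^n-g\|_2\le \|g^n-g_{\Lambda^*}\|_2 + \sigma_s^{(\ell)}(g)_2$ and (i) yields \eqref{ITH:error2}; a union bound over the lower RIP event and the Bernstein event preserves the probability $1-\gamma$ after absorbing constants into $C$.

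The main obstacle I anticipate is the careful bookkeeping that $T^n$ remains a lower set with $K(T^n)\le K(3s)$ throughout the iteration. This requires that each iterate $\bc^n$ produced by $H^{\ell}_s$ has lower support, which is built into the definition, and that unions of lower sets remain lower, which is immediate. One also must check that invoking Theorem \ref{theorem:lower-RIP} at order $3s$ rather than $s$ only changes the constants in \eqref{conditionMLower}, since $K(3s)\le C\, K(s)$ by Lemma \ref{LemmaGrowthKs} (applied twice or with a trivial extension), and hence the bound on $m$ retains the same form. The second technical point is ensuring that the lower RIP inner-product inequality is valid for vectors whose supports are lower but possibly distinct; this follows because polarization of the quadratic lower RIP inequality only requires the \emph{union} of the two supports to satisfy $K(\cdot)\le K(3s)$, and the $T^n$ construction precisely delivers this.
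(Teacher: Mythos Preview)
Your proposal is correct and follows essentially the same strategy as the paper. The paper establishes the one-step contraction via the polarization identity $\|\bc_{\Lambda^*}-\bc^{n+1}\|_2^2 \le 2\Re\langle \bv^n-\bc_{\Lambda^*},\bc^{n+1}-\bc_{\Lambda^*}\rangle$ rather than your triangle-inequality bound $\|\bc^{n+1}-\bc^*\|_2\le 2\|\bu_{T^n}-\bc^*\|_2$, and it controls the union $\Lambda^{n,*}$ via $K(\Lambda^{n,*})\le 3K(s)\le K(\alpha s)$ with $\alpha=2$ (Legendre) or $\alpha=3$ (Chebyshev) from \eqref{K2sKsRemark}, whereas you invoke $\delta_{\ell,3s}$ directly; both routes yield the same contraction with the same constants, and part (ii) is handled identically in the paper by the two-case Bernstein argument you describe.
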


\begin{proof}
We will prove that the assertion (i) holds provided that {$\delta_{\ell,\alpha s} < 1/2$ (where $\alpha = 2$ for Legendre system and $\alpha =3$ for Chebyshev system)}, following the technique in \cite{Fou10} 
for standard iterative hard thresholding. First, 
let $\bv^n = \bc^n + \bA^*(\bm {\tilde{g}}  - \bA \bc^n)$. On one hand, by 
definition \eqref{threshold} of $\bc^{n+1}$ in Algorithm \ref{alg:lower_ITH}, and since $\Lambda^*$ is lower of cardinality smaller than $s$\begin{align*}
 \|\bv^n - \bc_{\Lambda^*} \|_2^2
 &\geq
\|\bm{v}^n - \bc^{n+1} \|_2^2 =
\|(\bv^n - \bc_{\Lambda^*}) 
- (\bc^{n+1} - \bc_{\Lambda^*} ) \|_2^2\\ 
&
= \|\bv^n - \bc_{\Lambda^*} \|_2^2 
+\|\bc_{\Lambda^*} - \bc^{n+1} \|_2^2 
-2\Re 
\< \bv^n - \bc_{\Lambda^*}, \bc^{n+1} - \bc_{\Lambda^*} \>.
\end{align*}
This yields that 
$\|\bc_{\Lambda^*} - \bc^{n+1} \|_2^2 \le 2\Re 
\< \bv^n - \bc_{\Lambda^*}, \bc^{n+1} - \bc_{\Lambda^*} \>$. 
On the other hand, 
\begin{align*}
& \< \bv^n - \bc_{\Lambda^*}, \bc^{n+1} - \bc_{\Lambda^*} \>  
= \< (\bm{Id} - \bA^*\bA)\bc^n + \bA^* \bm {\tilde{g}} - \bc_{\Lambda^*}, 
\bc^{n+1}  - \bc_{\Lambda^*} \>  \\
=& \, \< (\bm{Id} - \bA^*\bA)(\bc^n - \bc_{\Lambda^*}), 
\bc^{n+1} - \bc_{\Lambda^*} \> + 
\frac1{\sqrt m} 
\< \tilde{\bm{\xi}} , \bA (\bc^{n+1}  - \bc_{\Lambda^*})\>.
\end{align*}
It is easy to see that 
$\Lambda^{n,*}:= \supp(\bc^n) \cup \supp(\bc^{n+1}) \cup \supp(\bc_{\Lambda^*})$ 
is a lower set which satisfies {$K(\Lambda^{n,*}) \le 3K(s)\leq K(\alpha s)$ (see \eqref{K2sKsRemark} for the second inequality)}, thus from Theorem \ref{theorem:lower-RIP},
\begin{align*}
|\< (\bm{Id} - \bA^*\bA)(\bc^n - \bc_{\Lambda^*}), \bc^{n+1}  - \bc_{\Lambda^*} \>| 
& \le \delta_{\ell,\alpha s} \|\bc^n - \bc_{\Lambda^*}\|_2 \| \bc^{n+1}  - \bc_{\Lambda^*} \|_2, 
\\ 
|\< \tilde{\bm{\xi}} , \bA (\bc^{n+1}  - \bc_{\Lambda^*}) \>| 
& \le \sqrt{1 + \delta_{\ell,\alpha s}} \|\tilde{\bm{\xi}}\|_2 \|\bc^{n+1}  - \bc_{\Lambda^*}\|_2. 
\end{align*}
We have then 
$$
\| \bc_{\Lambda^*} - \bc^{n+1} \|_2 \le 2 \delta_{\ell,\alpha s} 
\|\bc^n - \bc_{\Lambda^*}\|_2 +  \frac{2 \sqrt{1 + \delta_{\ell,\alpha s}}}{\sqrt m}
\|\tilde{\bm{\xi}}\|_2.
$$
The inequality \eqref{ITH:error1} follows given 
$\delta_{\ell,\alpha s}<1/2$. 

For (ii), note that we have
$\E(\tilde{\xi}^2_i)  = \|\bc_{(\Lambda^*)^c}\|_2^2$ and
$|\tilde{\xi}_i|  \le \|\bc_{(\Lambda^*)^c}\|_{\omega,1}$
implying that
\begin{align*}
\E\(\tilde{\xi}_i^2 -\E[\tilde{\xi}_i^2]\)^2  
\le \E[\xi_i^4] \le 
\|\bc_{(\Lambda^*)^c}\|_{\omega,1}^2 
\|\bc_{(\Lambda^*)^c}\|_2^2. 
\end{align*}
We can consider two cases $ \|\bc_{(\Lambda^*)^c}\|_2^2 > \frac{ \|{\bc}_{(\Lambda^*)^c}\|^2_{\omega,1}}{K(s)}$ and $ \|{\bc}_{(\Lambda^*)^c}\|_2^2 \le \frac{ \|{\bc}_{(\Lambda^*)^c}\|^2_{\omega,1}}{K(s)}$ and prove 
using Bernstein's inequality, similarly to Theorem \ref{theorem:interp},
that with probability exceeding $1-\gamma$, we always have
\begin{align*}
\frac{\|\tilde{\bm{\xi}}\|_2^2}{{m}} \le 2  
\| \bc_{(\Lambda^*)^c}\|_2^2 +  \frac{2\|\bc_{(\Lambda^*)^c}\|^2_{\omega,1}}{K(s)}.
\end{align*}
Substituting the previous inequality to \eqref{ITH:error1}, we obtain 
$$
 \|g^n - g_{\Lambda^*}\|_{2} \le 
 \rho^n \|g^0 - g_{\Lambda^*}\|_2 
 + \tau \sqrt 2 \|{\bc}_{(\Lambda^*)^c}\|_2 + 
 \frac{ \tau \sqrt 2 }{\sqrt{K(s)}} { \|{\bc}_{(\Lambda^*)^c}\|_{\omega,1}}. 
$$
The inequality \eqref{ITH:error2} then can be concluded by virtue of the triangle inequality. 
\end{proof}

In general, it may not be feasible to find the optimal vector 
$H^{\ell}_s(\bz)$ exactly. Greedy procedures can be used to 
explore a near optimal lower, $s$-sparse truncation of $\bm z$ 
and provide a surrogate $\widetilde{H}^{\ell}_s(\bz)$ of $H^{\ell}_s(\bz)$ 
(see \cite{CCDS13}). A significant advantage in our context is 
that we do not have to compute the components of $\bm z$ inductively but have them all at hand. The exploration cost is therefore 
a fraction of that of matrix multiplication. {We can also consider 
new additional algorithms that take advantage of full knowledge of $\bm z$.}
%
%
%
%
The numerical realization of Algorithm \ref{alg:lower_ITH} and comparison 
with standard iterative hard thresholding and iterative hard weighted 
thresholding will be conducted in future work.

\section{Numerical illustrations}
\label{sec:experiment}
\begin{figure}[!htb]
\begin{center}
\includegraphics[width=0.305\textwidth,clip=true,trim=0mm 1mm 13mm 13mm]{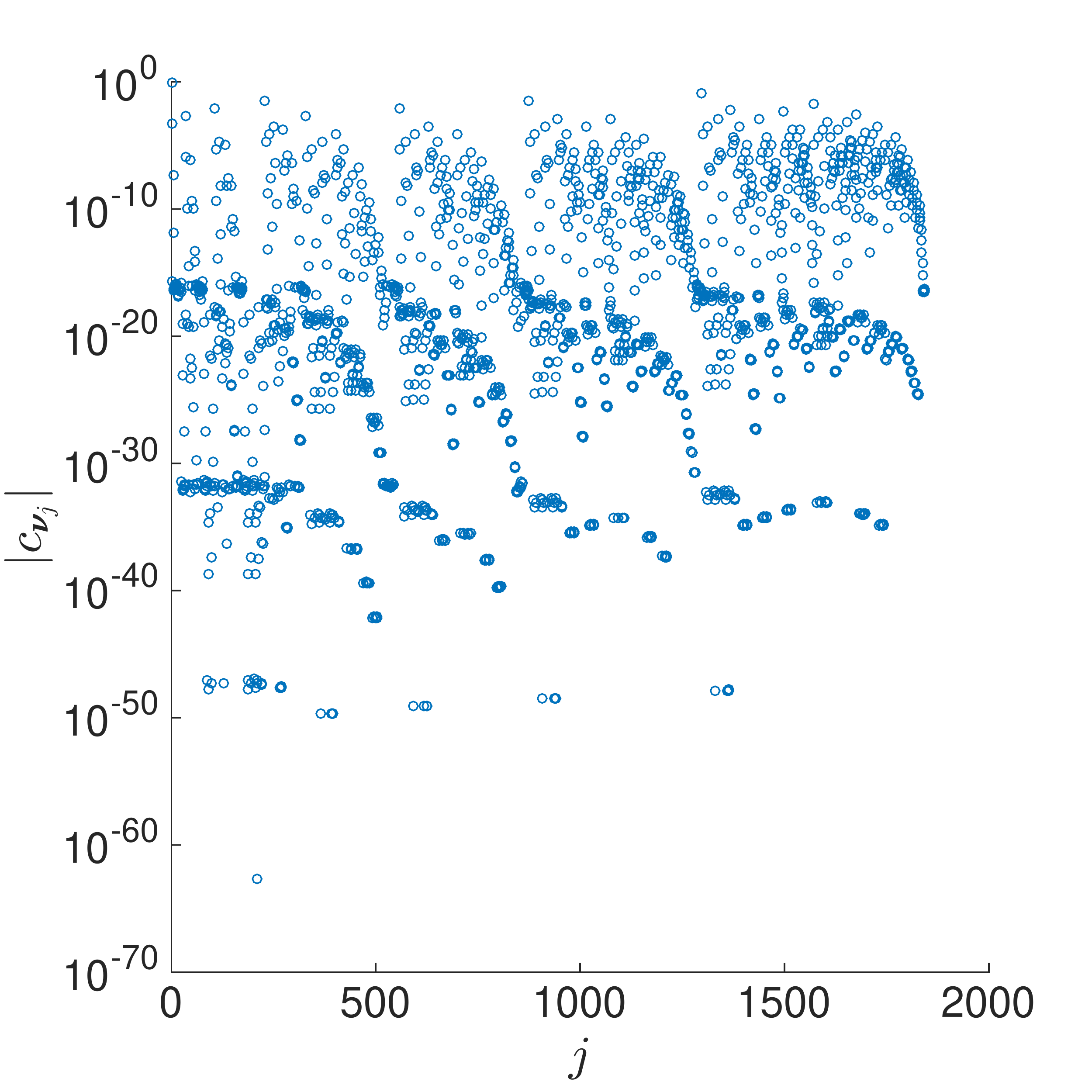}
\includegraphics[width=0.305\textwidth,clip=true,trim=0mm 1mm 13mm 13mm]{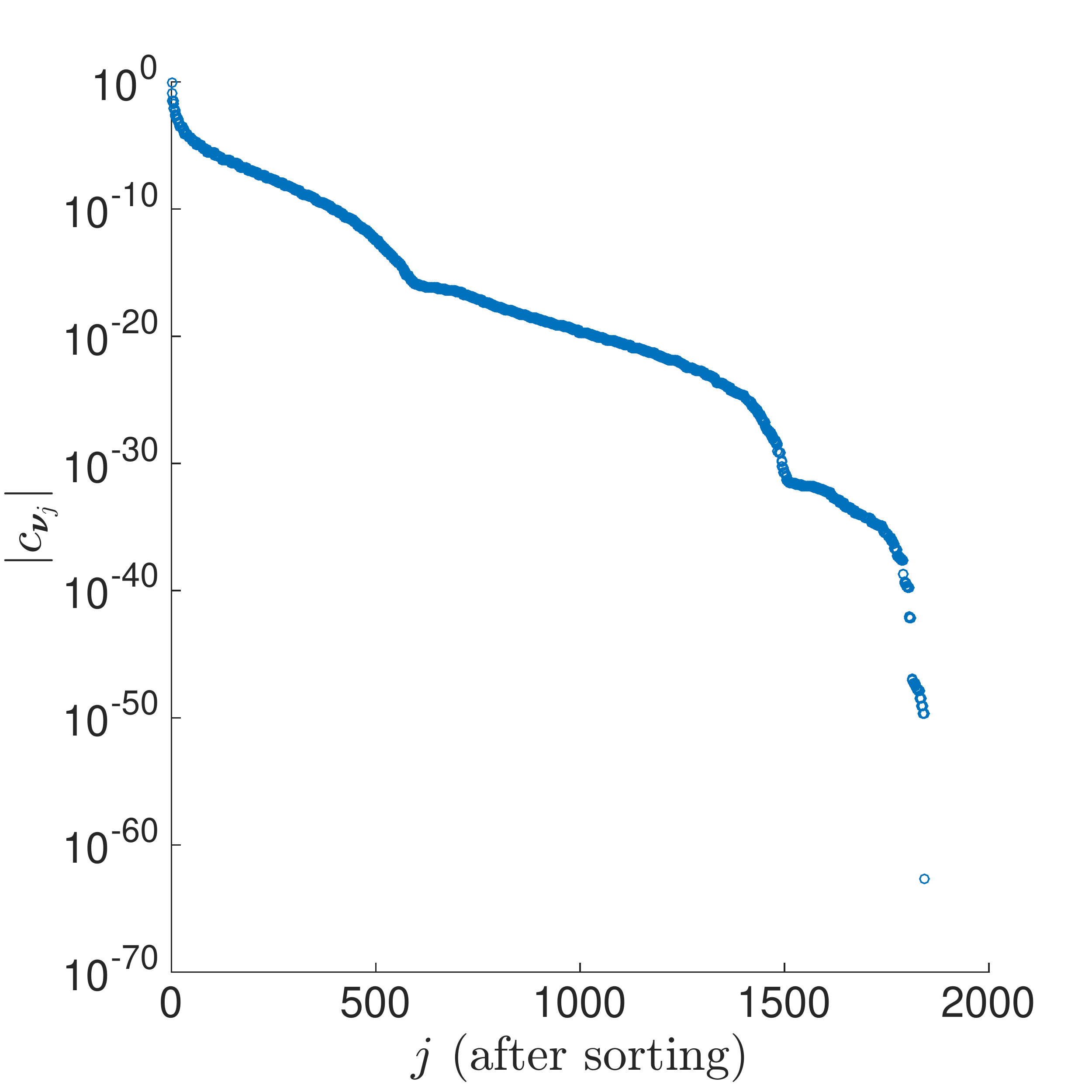}
\includegraphics[width=0.305\textwidth,clip=true,trim=0mm 1mm 13mm 13mm]{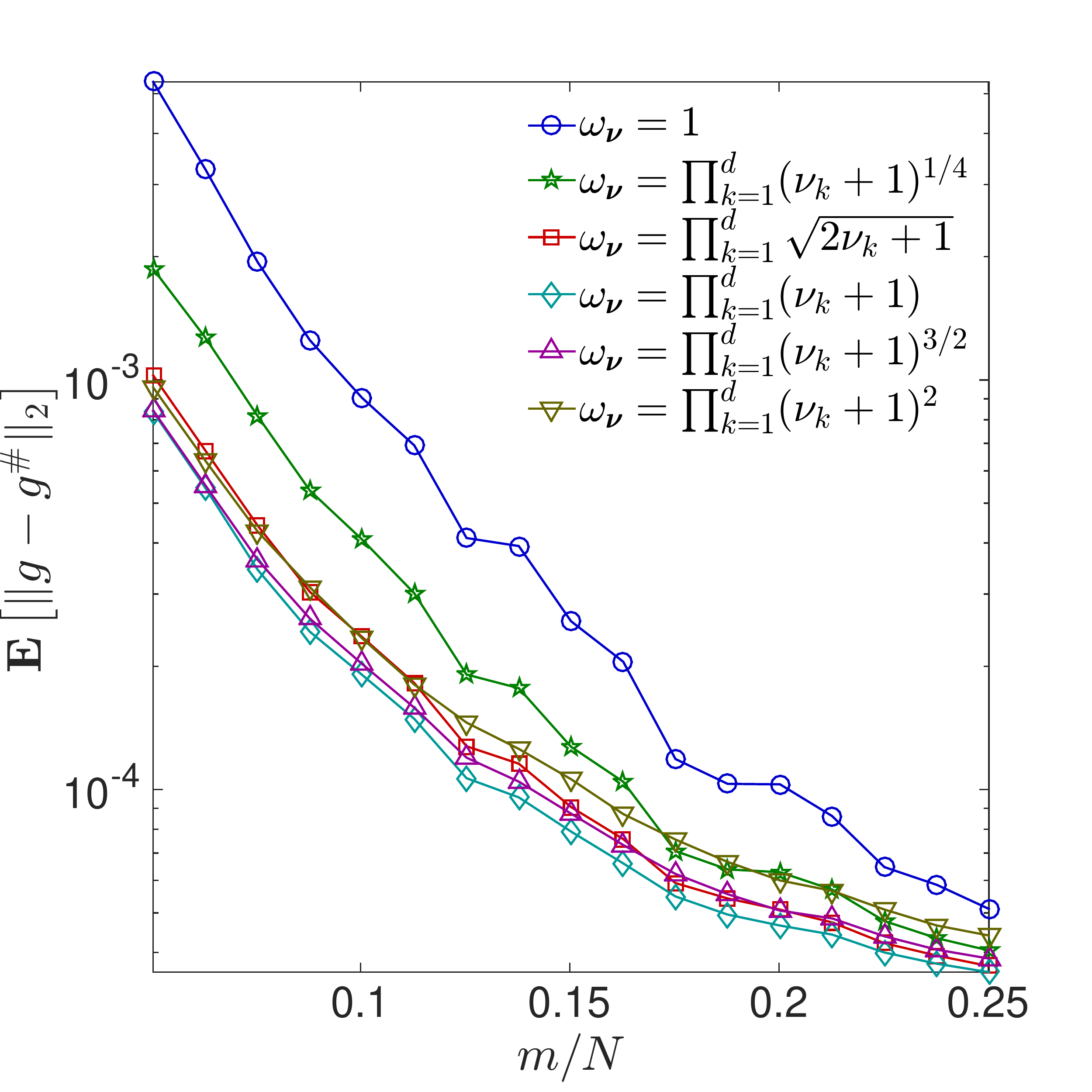}
\vskip5pt
\includegraphics[width=0.305\textwidth,clip=true,trim=0mm 1mm 13mm 13mm]{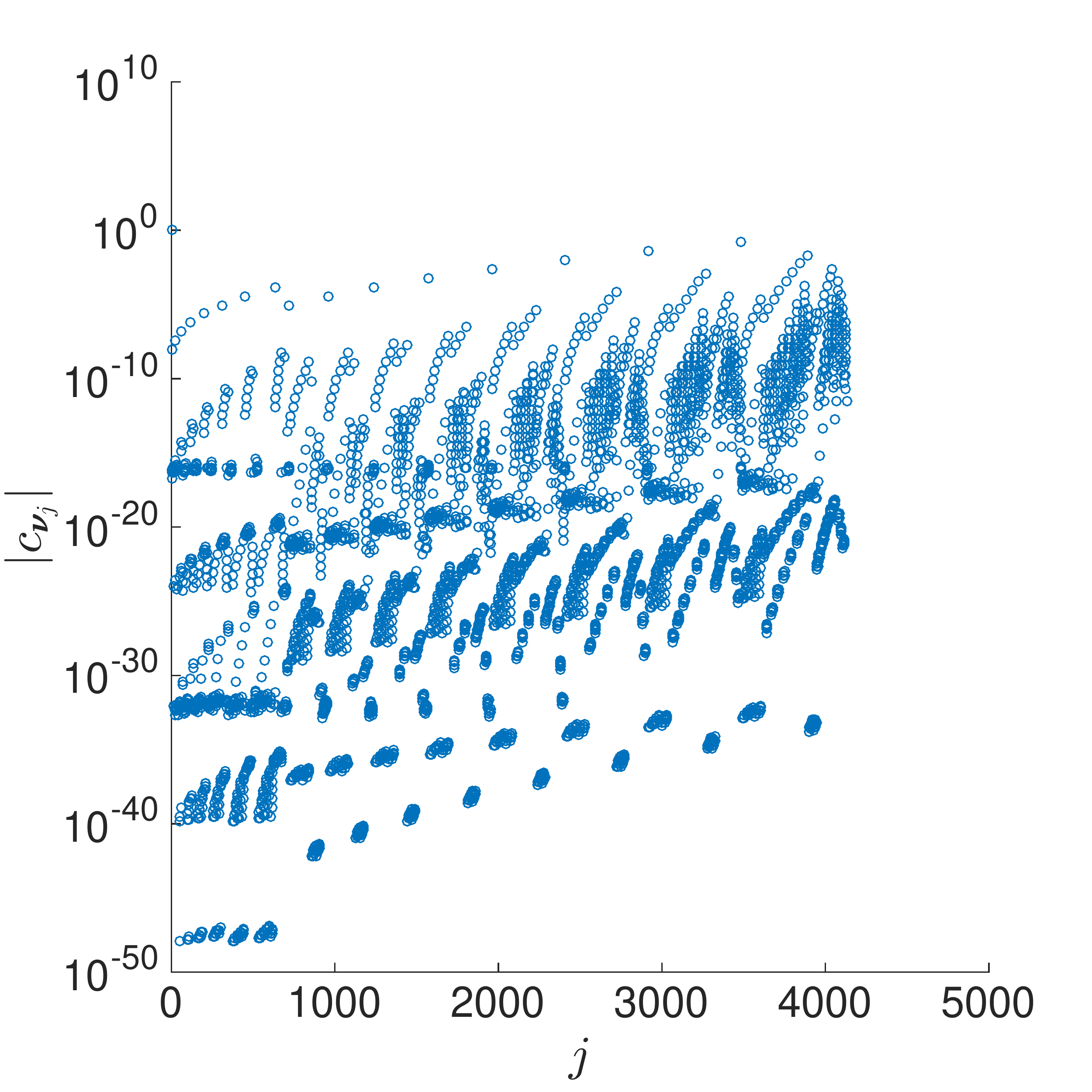}
\includegraphics[width=0.305\textwidth,clip=true,trim=0mm 1mm 13mm 13mm]{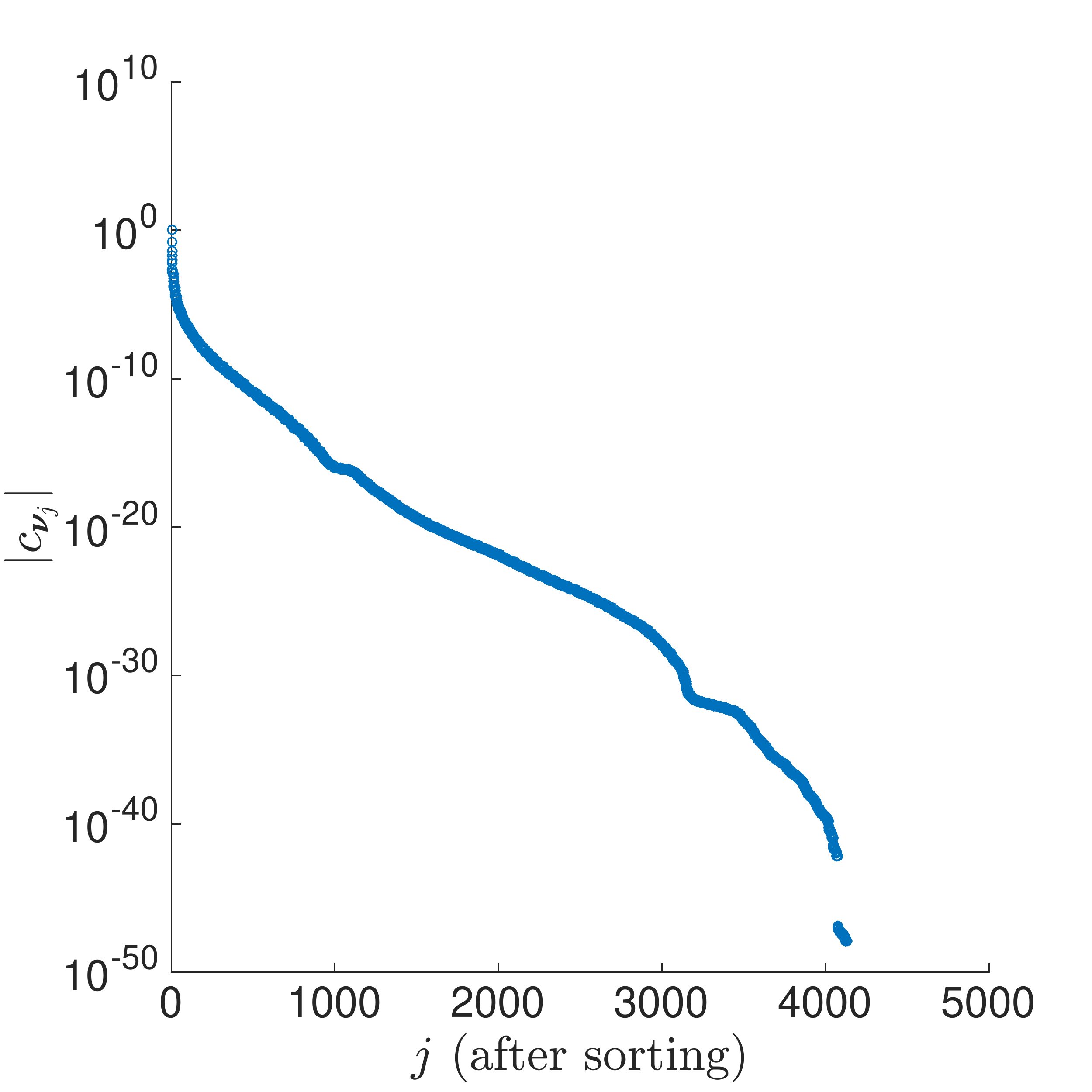}
\includegraphics[width=0.305\textwidth,clip=true,trim=0mm 1mm 13mm 13mm]{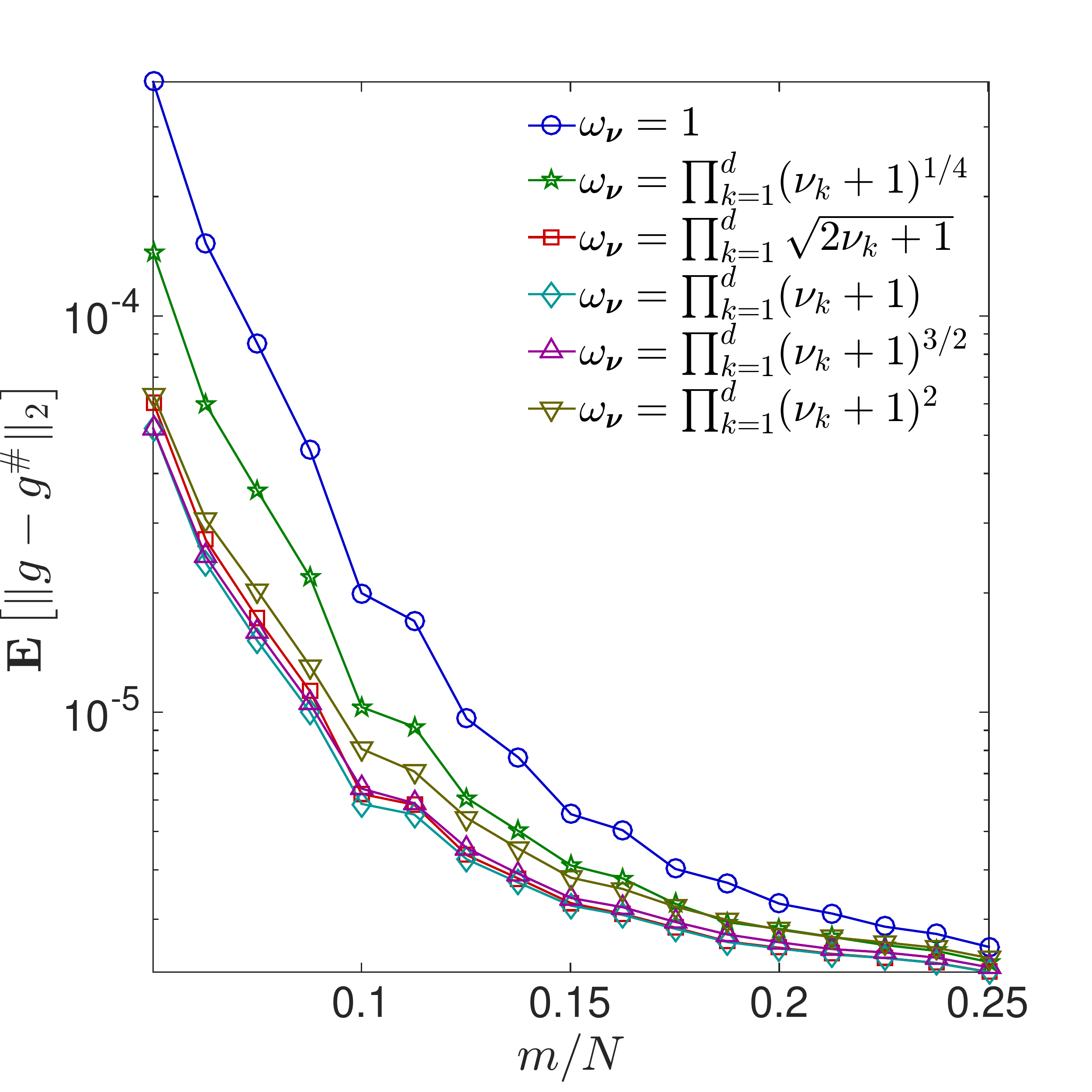}
\vskip5pt
\vspace{-.1in}
\caption{ 
Comparison of the averaged $L^2_\varrho$ error in approximating $g({\bm y}) = \frac{\prod_{k=1}^{\lceil d/2 \rceil} \cos(16 y_k/2^k)}{\prod_{k=\lceil d/2 \rceil + 1}^{d} (1-y_k/4^k) } $ using weighted $\ell^1$ minimization with various choices of weights. {\bf(Top)} $d=8$, $N=1843$, and $\| g_{\mathcal{H}_s^c} \|_{2} = 1.7487e-06$. {\bf(Bottom)} $d=16$, $N=4129$, and $\| g_{\mathcal{H}_s^c} \|_{2} = 1.3482e-06$.
}
\label{fig:mixed_trig_rational_results}
\end{center}
\end{figure}

\begin{figure}[!htb]
\begin{center}
\includegraphics[width=0.305\textwidth,clip=true,trim=0mm 1mm 13mm 13mm]{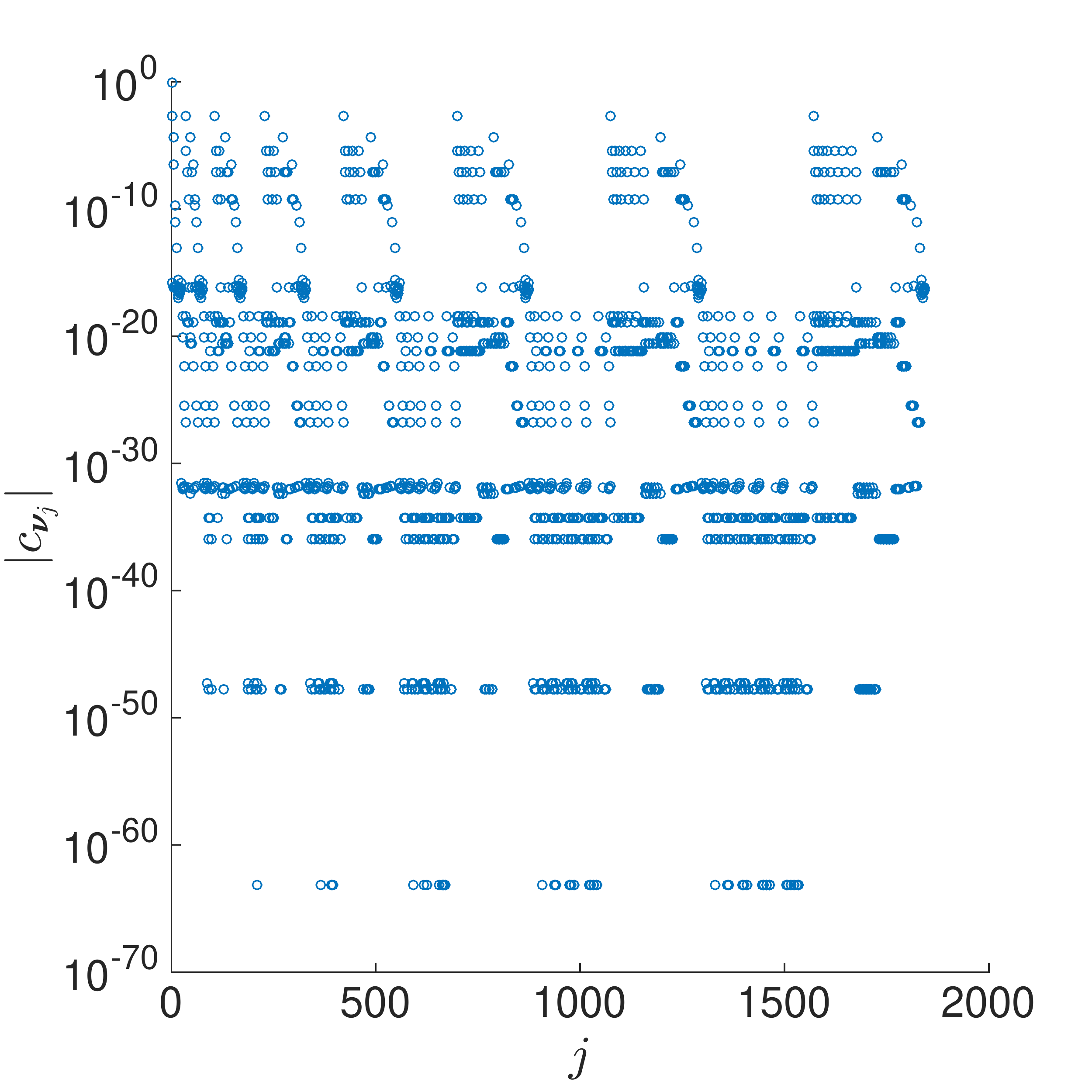}
\includegraphics[width=0.305\textwidth,clip=true,trim=0mm 1mm 13mm 13mm]{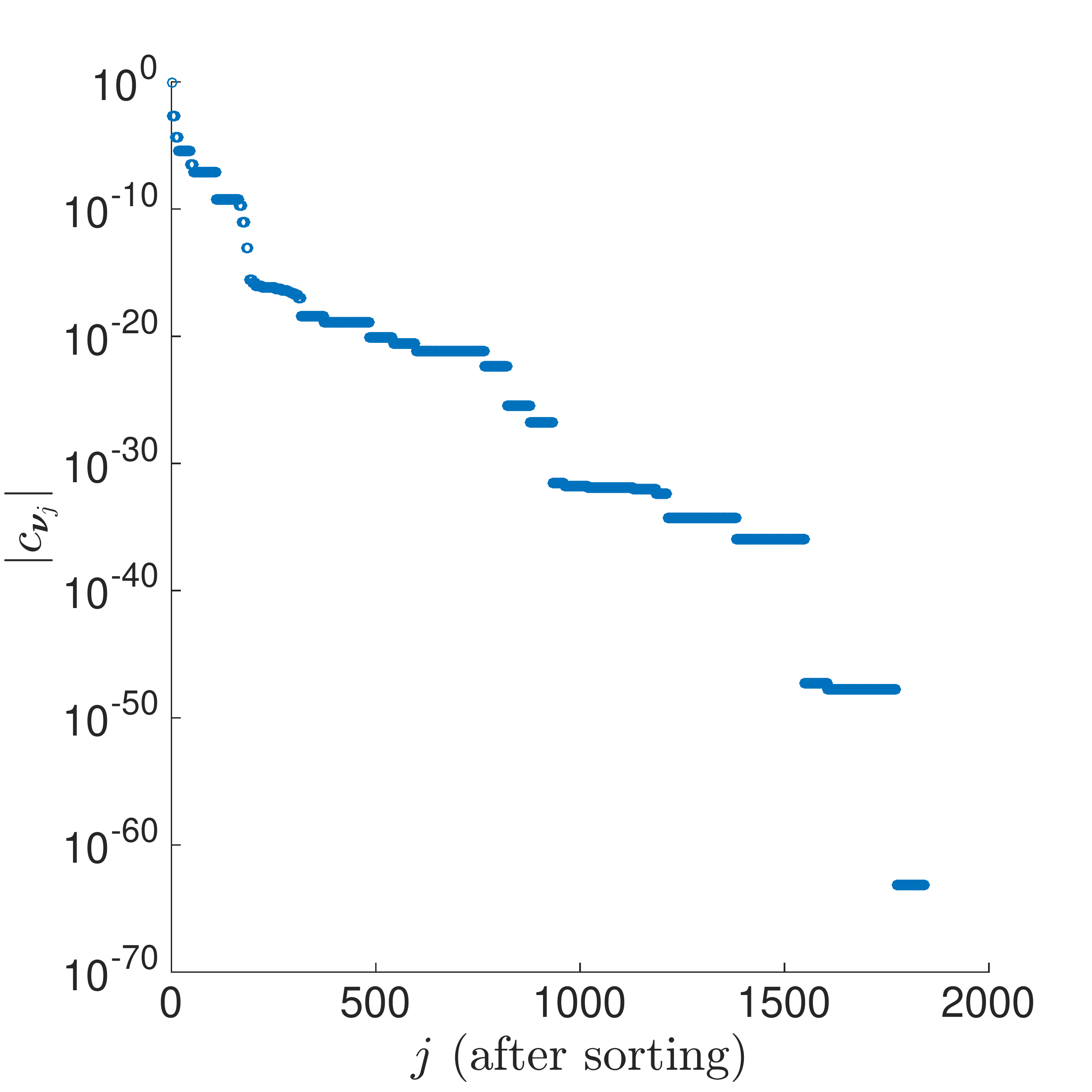}
\includegraphics[width=0.305\textwidth,clip=true,trim=0mm 1mm 13mm 13mm]{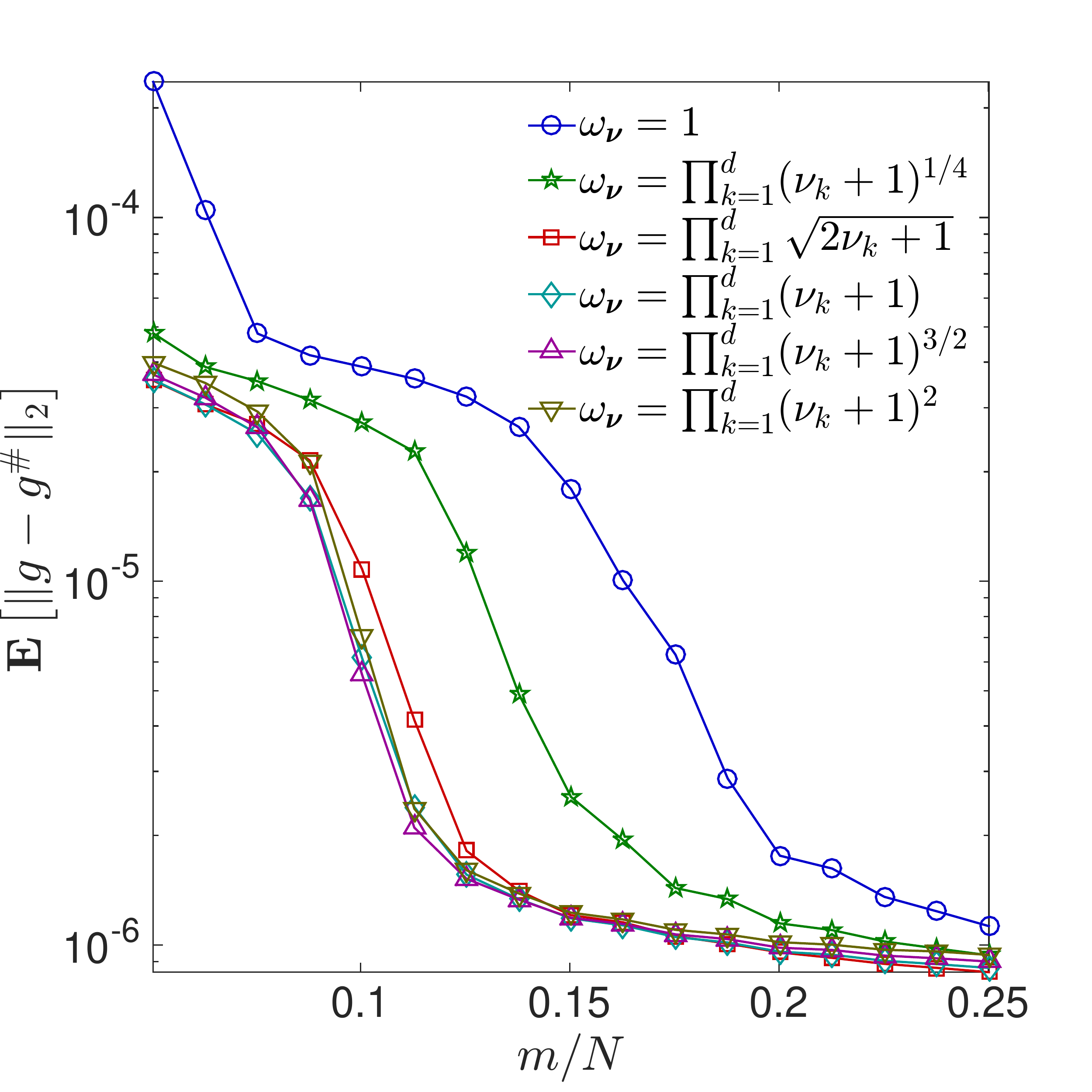}
\vskip5pt
\includegraphics[width=0.305\textwidth,clip=true,trim=0mm 1mm 13mm 13mm]{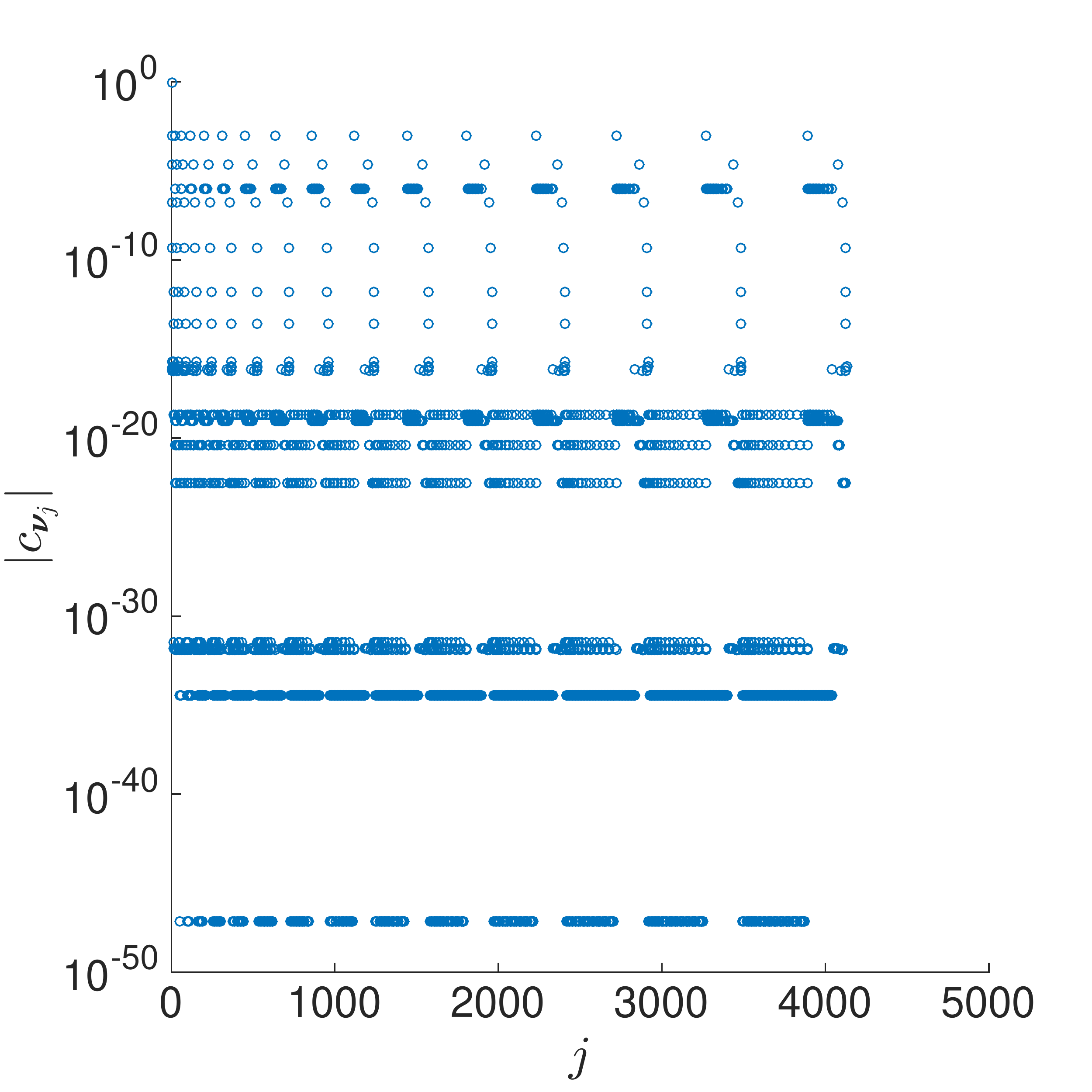}
\includegraphics[width=0.305\textwidth,clip=true,trim=0mm 1mm 13mm 13mm]{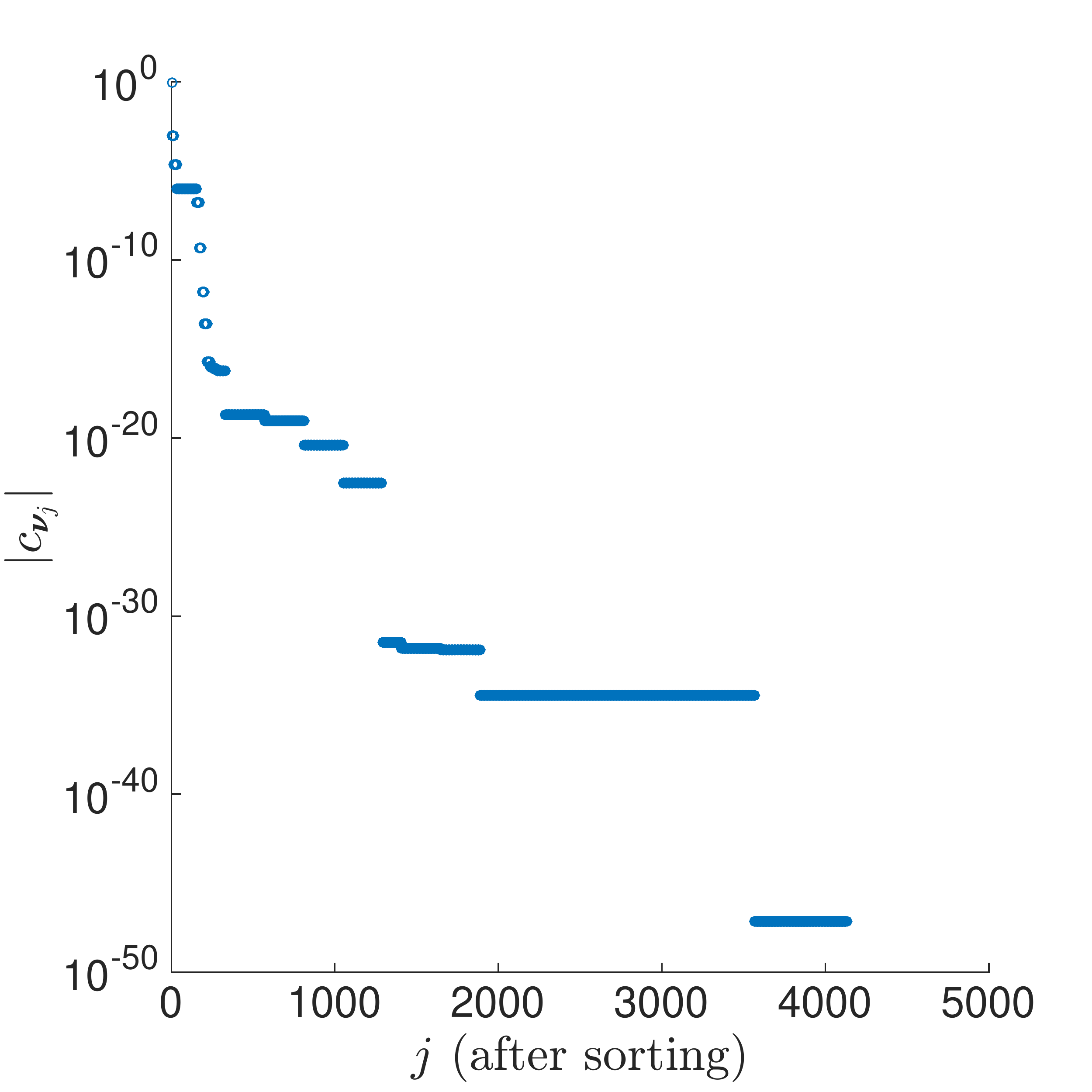}
\includegraphics[width=0.305\textwidth,clip=true,trim=0mm 1mm 13mm 13mm]{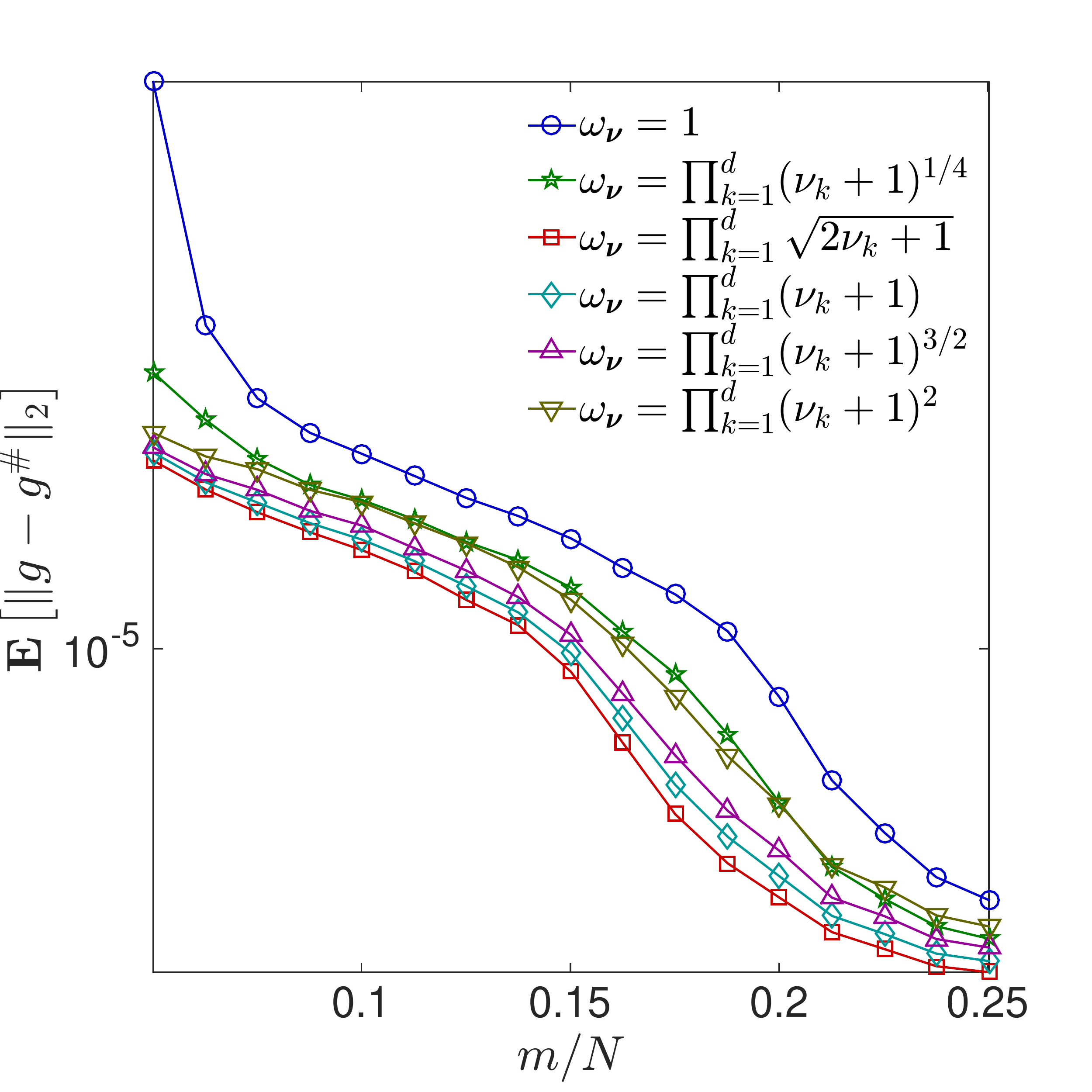}
\vskip5pt
\vspace{-.1in}
\caption{
Comparison of the averaged $L^2_\varrho$ error in approximating $g({\bm y})= \exp\left(\frac{-\sum_{k=1}^{d} \cos(y_k)}{8d}\right) $ using weighted $\ell^1$ minimization with various choices of weights. {\bf(Top)} $d=8$, $N=1843$, and $\| g_{\mathcal{H}_s^c} \|_{2} = 4.0232e-07$. {\bf(Bottom)} $d=16$, $N=4129$, and $\| g_{\mathcal{H}_s^c} \|_{2} = 2.0155e-06$.
}
\label{fig:dropoff_exp_cos_results}
\end{center}
\end{figure}

In this section, we provide several numerical examples to demonstrate the efficiency of our weighted $\ell_1$ minimization with $\omega_{\bm \nu} = \|\Psi_{\bm \nu}\|_{L^\infty}$ for smooth multivariate function recovery. We focus here on the approximation of $g$ in terms of orthonormal Legendre expansions by solving 
\begin{equation}
\min_{\bm{z}\in\C^N} \;\; \| \bm{z} \|_{\omega,1} \quad \text{subject to} \quad  \| \bm{\tilde{g}} - \bm{A}\bm{z} \|_2 \leq \frac{\eta}{\sqrt{m}} ,
\label{weighted_ell1_v2}
\end{equation}
where ${\eta} =  \sqrt{m} \| g_{\mathcal{H}_s^c} \|_{2} $ for various choices of weights.  As we test with simple functions, the expansion of $g$ can be computed numerically with the use of a quadrature approximation yielding an estimate of the tail $\eta$ a priori. The software code {\tt SPGL1}  \cite{BergFriedlander:2008,spgl1:2007} is employed to solve \eqref{weighted_ell1_v2}.
%
In each example, we choose the polynomial subspace $\mathbb{P}_{\cJ} \equiv \mathbb{P}_{\mathcal{H}_s}$, and increase the number of samples $m$ up to some $m_{\max} < N = \#(\cH_s)$. For each ratio $m/N$, the set of random samples is fixed over various choices of weights for a performance comparison. 
We then run 50 trials for the averaged $L^2_{\varrho}$ error, setting the maximum number of iterations in {\tt SPGL1} per trial to 10,000. Our results are shown in Figures \ref{fig:mixed_trig_rational_results}-\ref{fig:exp_results} for several multivariate functions. The left panels represent the magnitudes of polynomial coefficients (computed with MATLAB via a sparse-grid algorithm) indexed in $\cH_s$ and sorted lexicographically. The center panels depict the decay of the coefficients once sorted by magnitude. The right panels show the corresponding convergence results. 

Our experiments indicate that for functions with very fast decaying polynomial expansions, $\omega_{\bm \nu} = \| \Psi_{\bm \nu} \|_{L^\infty}$ is virtually the optimal weight. Indeed, for the function concerned in Figure \ref{fig:mixed_trig_rational_results}, consisting of trigonometric and rational univariate functions, we see in both $d=8$ and $d=16$ that the weight $\omega_{\bm \nu} = \| \Psi_{\bm \nu} \|_{L^\infty}$ significantly outperforms the unweighted $\ell_1$ approach. We also note that in $d=16$, higher weights begin to have decreasing benefit, performing worse than our proposed weight. 
%


The results in Figure \ref{fig:dropoff_exp_cos_results} are related to a function that
involves the exponent of a sum of univariate trigonometric functions. We note that only a small fraction of coefficients exceed $10^{-16}$ in both $d=8$ and $d=16$. In this case, we see that the weighted $\ell^1$ with the weight $\omega_{\bm \nu} = \| \Psi_{\bm \nu} \|_{L^\infty}$ performs the best out of all of the weights supplied. We also observe that increasing the weights beyond $\| \Psi_{\bm \nu} \|_{L^\infty}$ leads to a corresponding increase in the averaged $L^2_\varrho$ error. 


In Figure \ref{fig:slower_decay_rational_results}, we test with a root of a rational function. Here again, the weight $\omega_{\bm \nu} = \| \Psi_{\bm \nu} \|_{L^\infty}$ performs the best, and the approximation errors grow as the weights increase beyond this weight. The two highest weights even perform worse than unweighted $\ell^1$ for higher values of $m/N$. Comparing Figures \ref{fig:mixed_trig_rational_results} and \ref{fig:slower_decay_rational_results}, the similar center panels suggest similar decay rates for both functions inside $\cH_s$. The performance of the weights between the two examples however drastically differs, possibly due to the different support of the large coefficients. For this function, we were unable to test with $d=16$ and $N=4129$ due to the high expansion tail. 


On the other hand, if the polynomial expansion is less sparse, our weight may be not optimal. Figure \ref{fig:exp_results} shows the results for an exponential function of a linear combination of the $1$-$d$ variables. For this function, over half of the coefficients in $\cH_s$ exceed $10^{-8}$, and $\omega_{\bm \nu} = \| \Psi_{\bm \nu} \|_{L^\infty}$ performs worse than the larger weights. Still, we observe here, as in all tests for 
smooth functions in higher dimensions, i.e., $d=8$ and $d=16$, that our weight consistently provides improved accuracy compared with the  unweighted $\ell_1$, thus confirming the theory presented throughout. 
\begin{figure}[!htb]
\begin{center}
\includegraphics[width=0.320\textwidth,clip=true,trim=0mm 1mm 13mm 13mm]{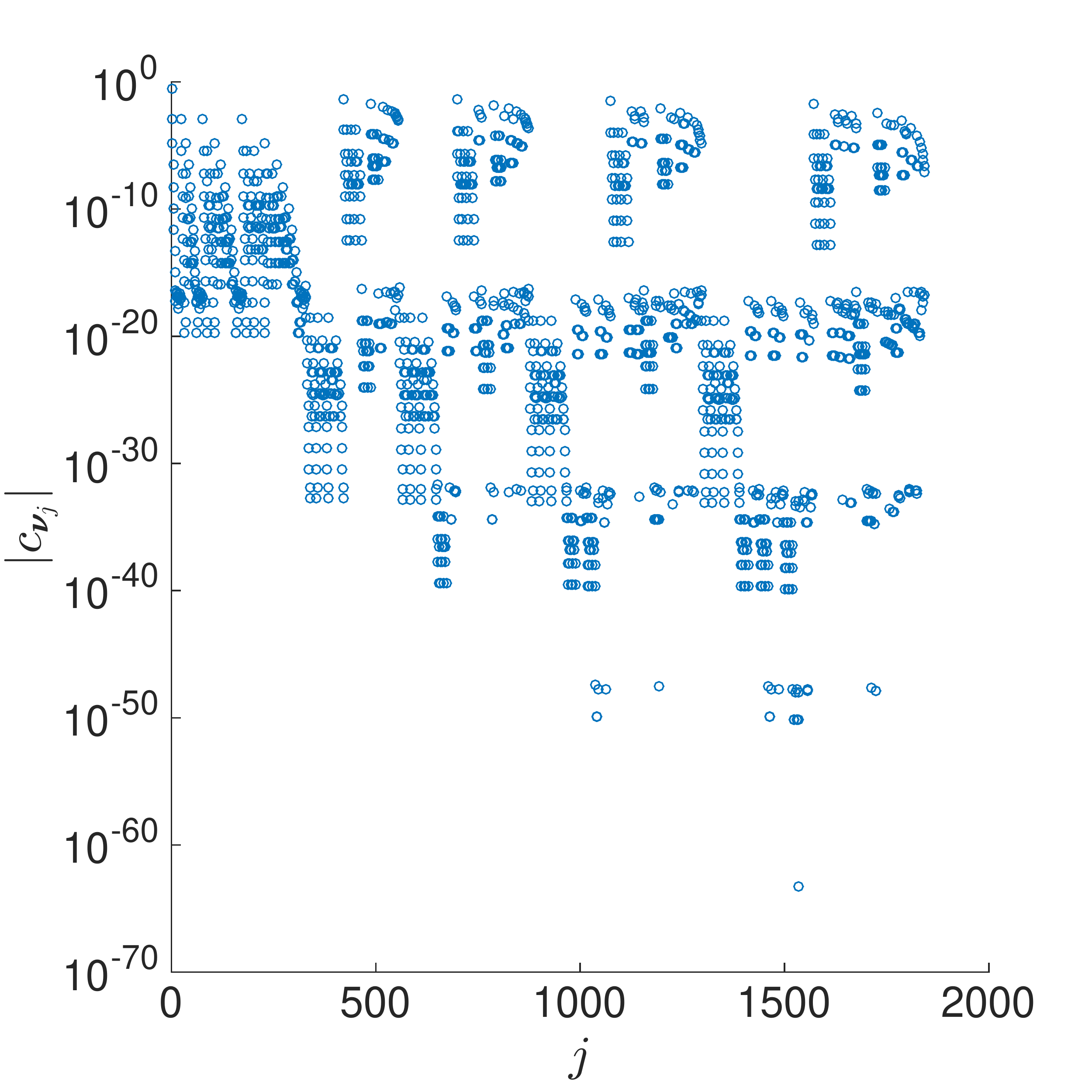}
\includegraphics[width=0.320\textwidth,clip=true,trim=0mm 1mm 13mm 13mm]{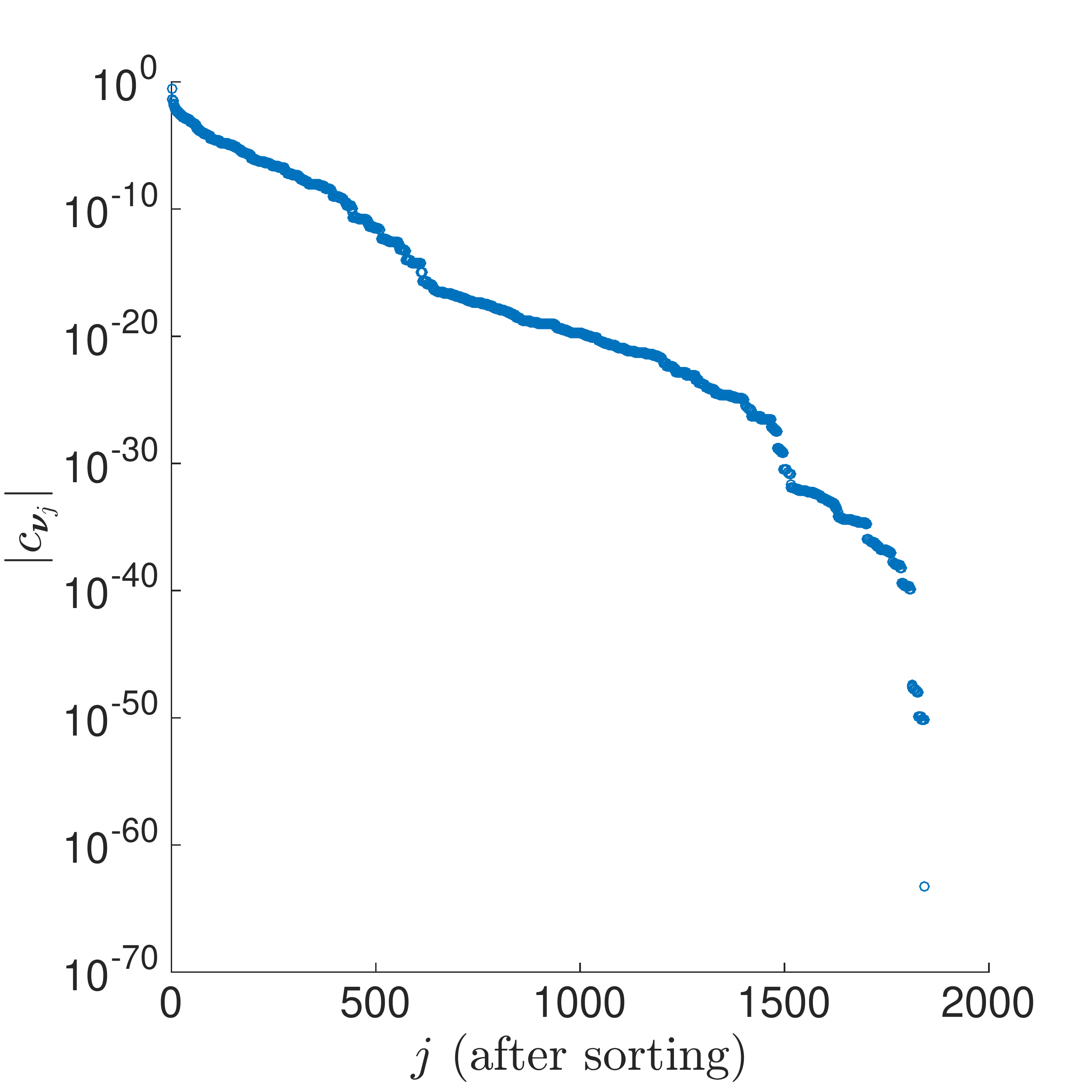}
\includegraphics[width=0.320\textwidth,clip=true,trim=0mm 1mm 13mm 13mm]{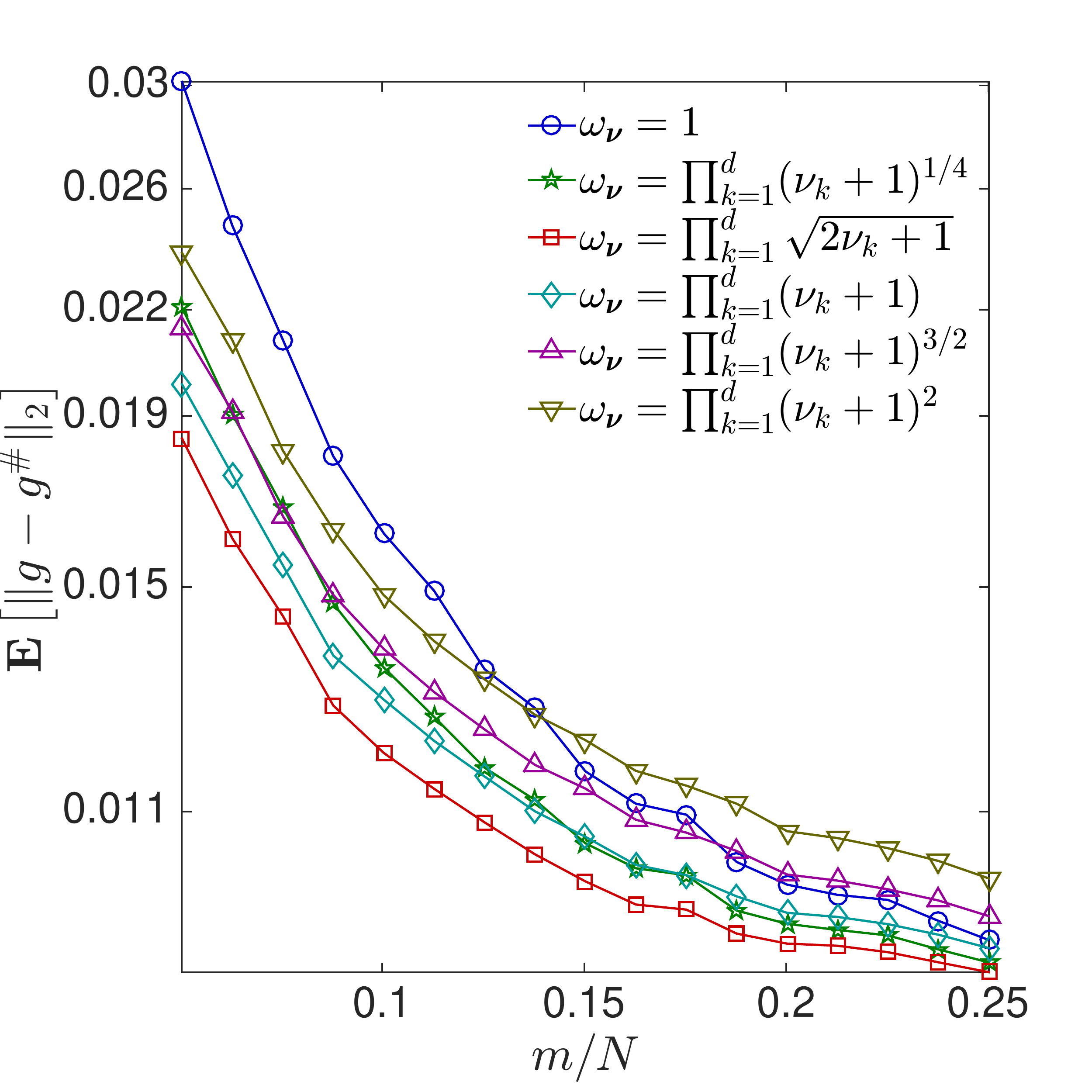}
\vskip5pt
\vspace{-.1in}
\caption{
Comparison of the averaged $L^2_\varrho$ error in approximating $g({\bm y})= \left[ \frac{\prod_{k=1}^{\lceil d/2 \rceil} (1+4^k y_k^2) }{\prod_{k=\lceil d/2 \rceil + 1}^d (100+5y_k)} \right]^{1/d}$ using weighted $\ell^1$ minimization with various choices of weights. $d=8$, $N=1843$ and $\| g_{\mathcal{H}_s^c} \|_{2} = 6.1018e-3$.
}
\label{fig:slower_decay_rational_results}
\end{center}
\end{figure}
%
%
\begin{figure}[!htb]
\begin{center}
\vspace{.1in}
\includegraphics[width=0.320\textwidth,clip=true,trim=0mm 1mm 13mm 13mm]{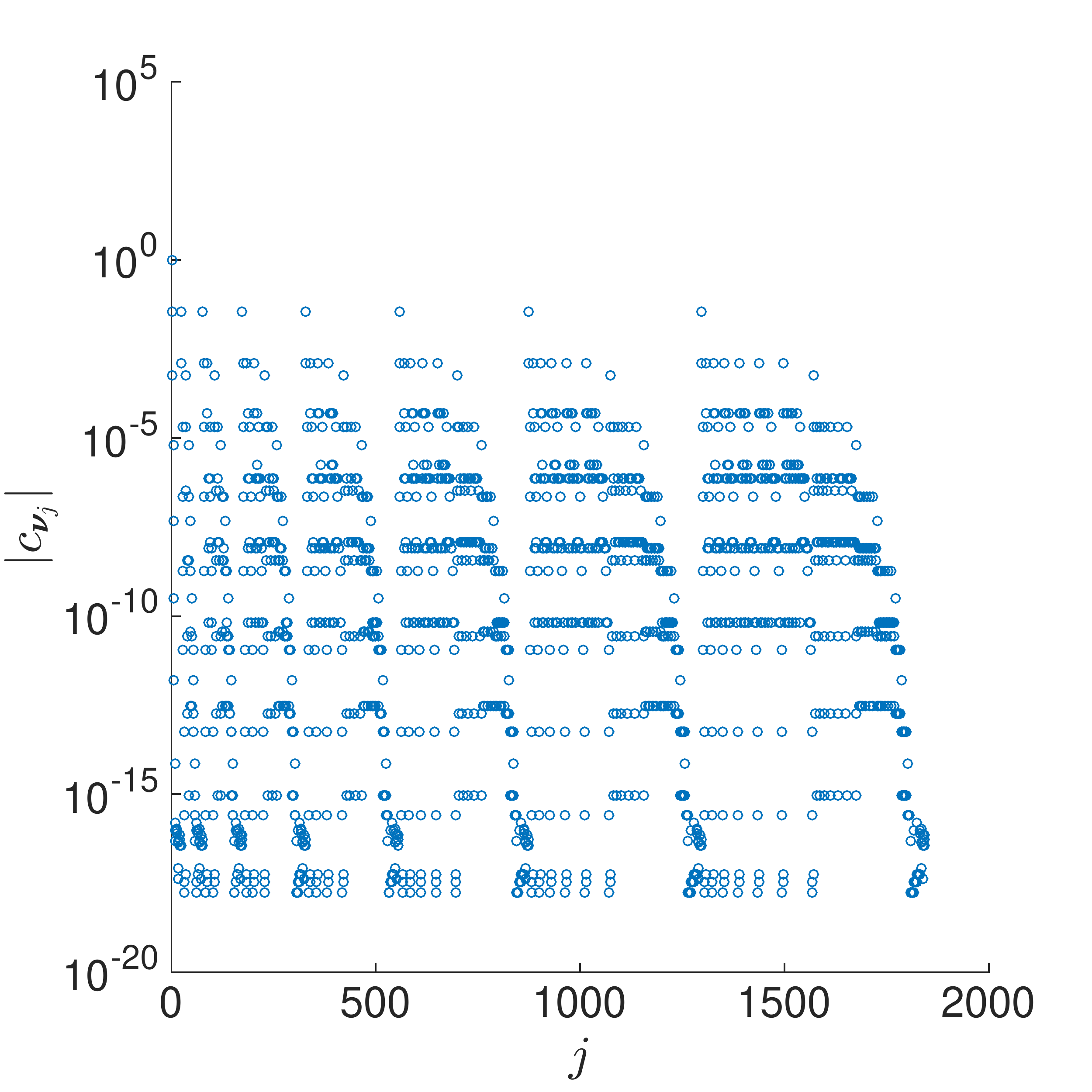}
\includegraphics[width=0.320\textwidth,clip=true,trim=0mm 1mm 13mm 13mm]{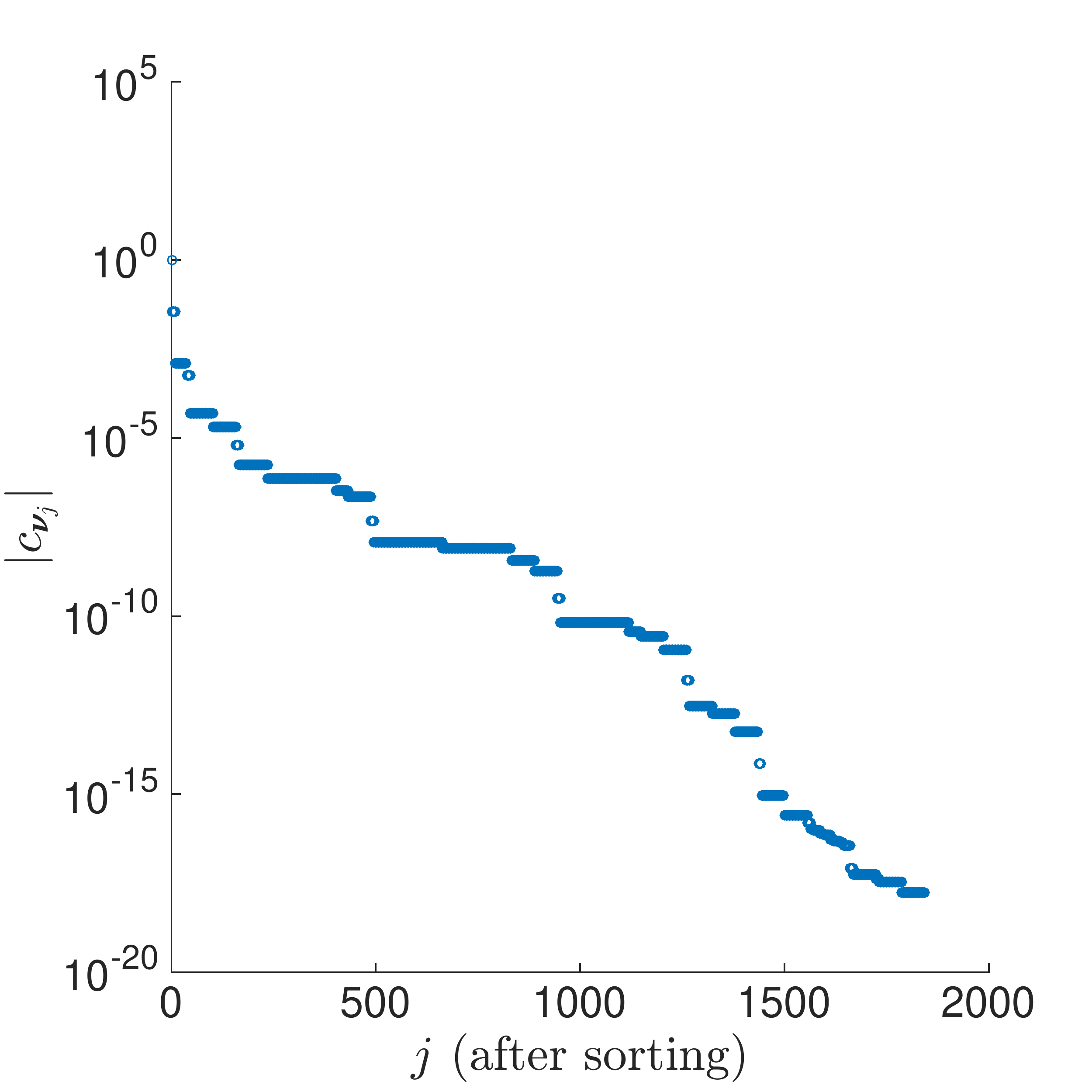}
\includegraphics[width=0.320\textwidth,clip=true,trim=0mm 1mm 13mm 13mm]{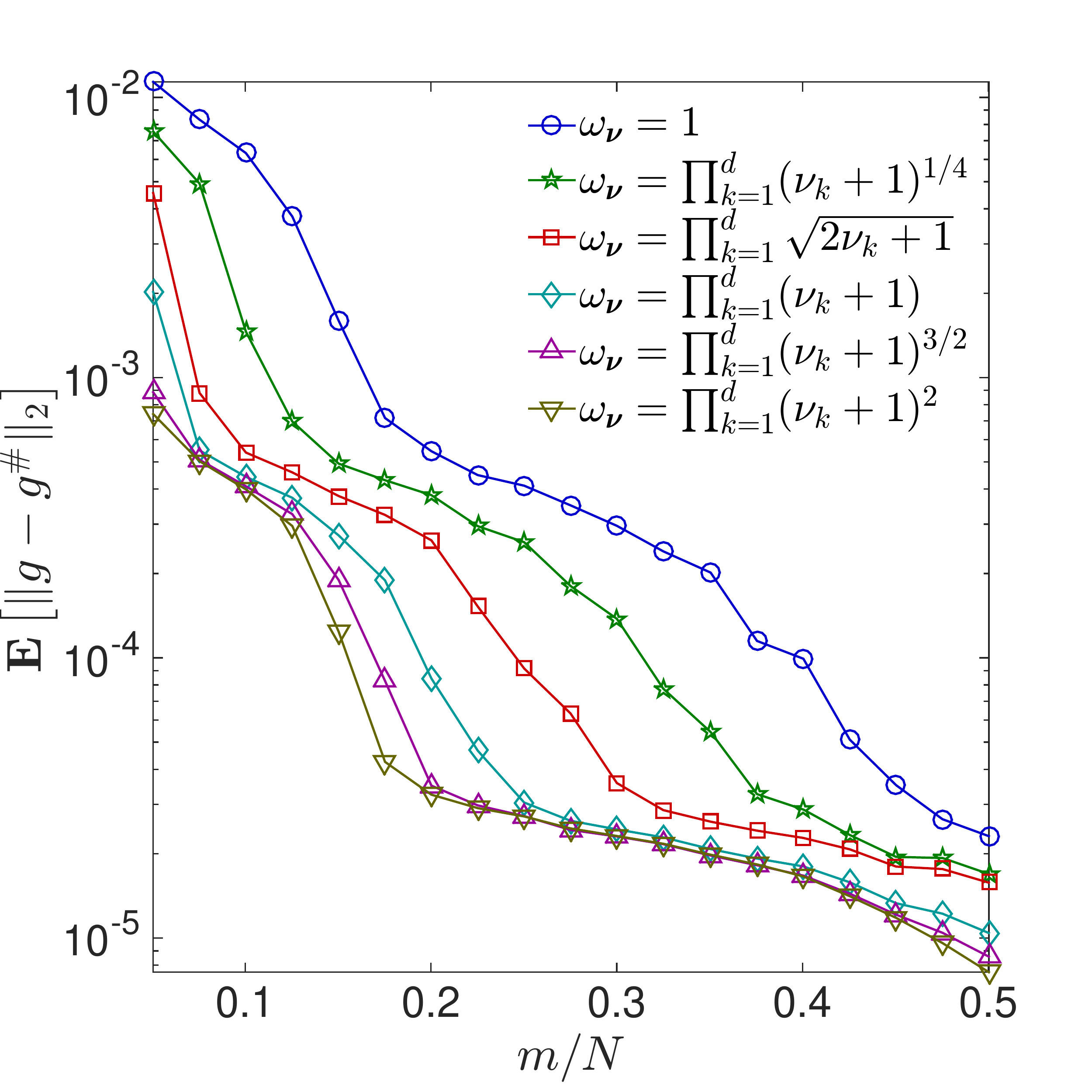}
\vskip5pt
\includegraphics[width=0.320\textwidth,clip=true,trim=0mm 1mm 13mm 13mm]{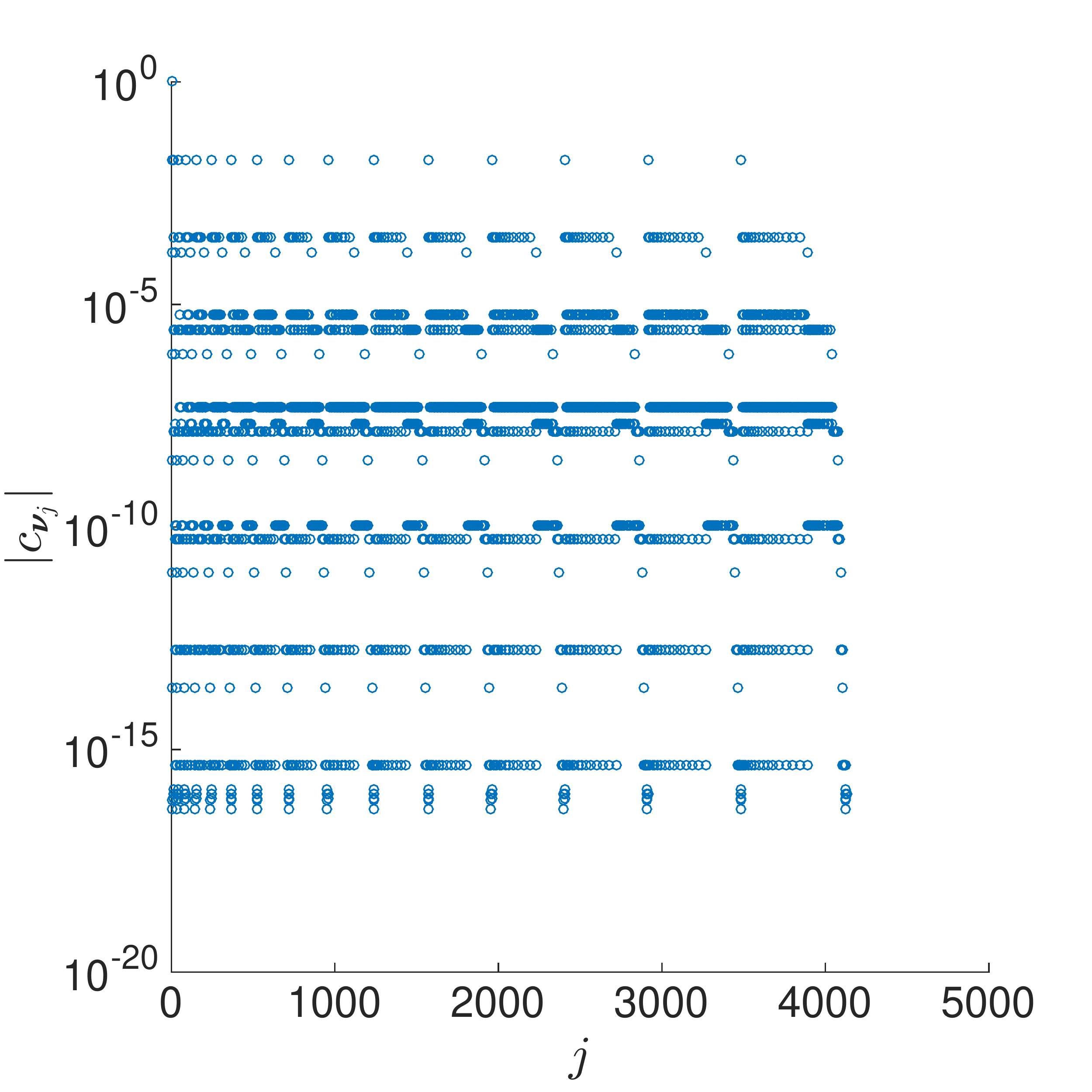}
\includegraphics[width=0.320\textwidth,clip=true,trim=0mm 1mm 13mm 13mm]{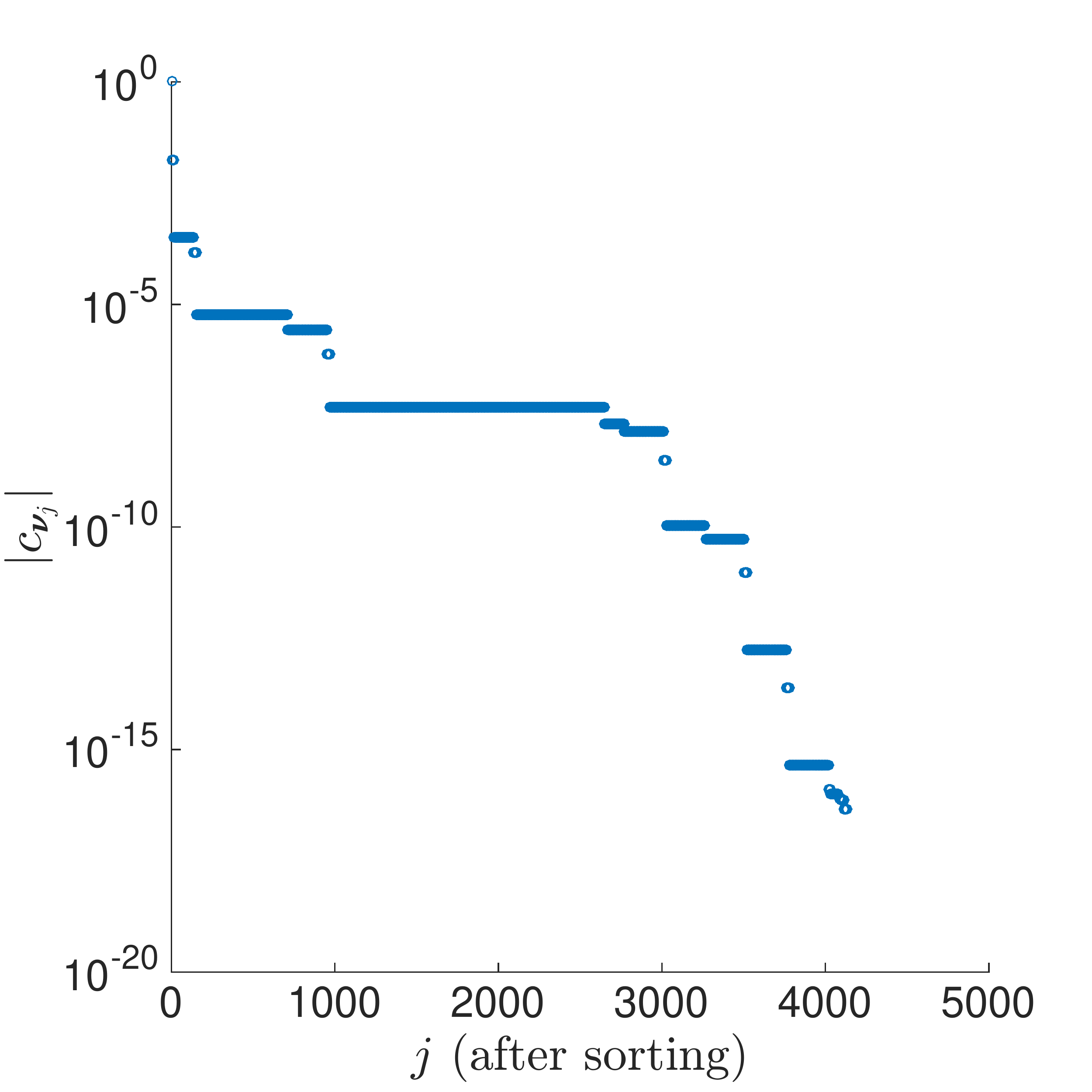}
\includegraphics[width=0.320\textwidth,clip=true,trim=0mm 1mm 13mm 13mm]{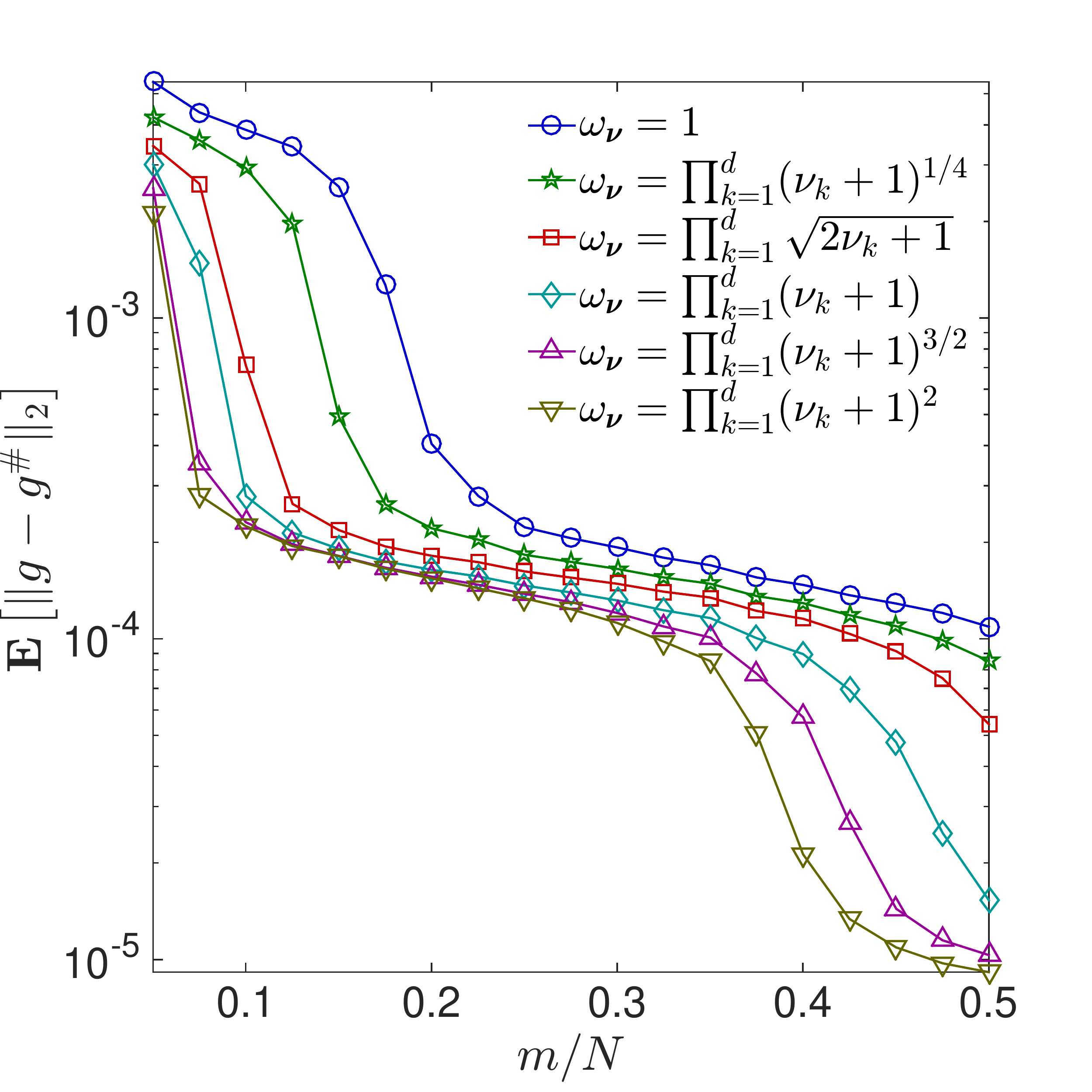}
\vskip5pt
\vspace{-0.1in}
\caption{
Comparison of the averaged $L^2_\varrho$ error in approximating $g({\bm y}) = \exp\left(-\frac{\sum_{k=1}^{d} y_k}{2d}\right)$ using weighted $\ell^1$ minimization with various choices of weights. {\bf(Top)} $d=8$, $N=1843$, and $\| g_{\mathcal{H}_s^c} \|_{2} = 7.2714e-07$. {\bf(Bottom)} $d=16$, $N=4129$, and $\| g_{\mathcal{H}_s^c} \|_{2} = 3.7412e-07$.
}
\label{fig:exp_results}
\end{center}
\end{figure}

\section{Concluding remarks}

In this work we present several novel compressed sensing approaches for sparse Legendre and Chebyshev approximations of real and complex functions in high dimensions. Motivated by the fact that the target function in many applications is smooth and characterized by a rapidly decaying orthonormal expansion, whose most important terms are captured by a lower set, we 
develop new $\ell_1$ minimization and hard thresholding procedures to impose the lower preference. 
Through rigorous analysis and numerical illustrations we demonstrate that the proposed methods possess a significantly reduced sample complexity compared to existing techniques. This advantage is established through the introduction of lower RIP, a weaker version of RIP that is associated with lower sets, and an optimal choice of polynomial subspace. In addition, we prove a generalized version of the result in \cite{Bou14} for bounded orthonormal systems and improve the RIP estimate by one logarithm factor.

Extending the theory and the procedures developed herein for approximating high-dimensional parameterized PDE systems is the next logical step, as it has been known that for a large class of such systems, the polynomial chaos expansions of parameterized solutions decay exponentially.  A significant challenge associated with this problem is that the ``signal'' $ {\bm c} = (c_{\bm \nu})_{\bm{\nu}\in \cJ}$ to be recovered has Hilbert-valued, rather than real or complex, components, i.e., $c_{\bm \nu} \in \mathcal{V}$. As shown in \cite{DO11,MG12,YK13,PHD14,RS14,HD15}, standard compressed sensing techniques only approximate a functional of PDE solutions, for instance, $u(x^\star,{\bm y}) $ (via ${c}_{\bm \nu}(x^{\star})$) at a single location $x^\star$ in physical space. Although $u(\cdot, {\bm y})$ can be constructed from these pointwise evaluations using piecewise polynomial interpolation, least square regression, etc., this practice faces two limitations. First, a priori information concerning the decay of $(\|c_{\bm \nu}\|_{\mathcal{V}})$, available in many applications, cannot be exploited for improving the convergence of recovery algorithms. Second, each $u(x^{\star},\bm{y})$ is only successfully reconstructed, {with a certain  probability}, and there may be a fraction of selected nodes $x^\star$ in which $u(x^\star,{\bm y}) $ is ill-approximated (i.e., with low probability). To address these difficulties, we aim to investigate new convex optimization and thresholding frameworks for Hilbert-valued functions, so that $c_{\bm{\nu}} \in \mathcal{V}$ can be directly computed. The mathematical analysis and computational aspect of this approach will be documented in future work.

\appendix
\section{Proofs of the RIPs} 
\label{sec:RIP}
We recall the following notations, which will be used throughout this section:
\begin{align*}
B_{1,N} &:= \{\bz \in \C^N: \|\bz\|_1 = 1\},\\
\cE_s  &:= \{\bz \in \C^N: \|\bz\|_2 = 1,\;\;\#(\supp(\bm{z})) \le s\},\\
\mathcal{E}^{\ell}_s   & := \{\bm{z} \in \mathbb{C}^N: \|\bm{z}\|_2 = 1,\, K(\supp(\bz)) \le K(s)\}, \text{ and}\\
\psi(\by,\bz) &:= \sum_{{\bm \nu}\in \cJ} {z}_{\bm \nu} \Psi_{\bm \nu}(\by),
\quad\text{with}\quad \by\in \mathcal{U},\;\; \bz \in \C^N.
\end{align*}

\subsection{Supporting lemmas} 
First, we derive a Chernoff-Hoeffding bound for 
complex random variables, as well as a tail bound 
for Bernoulli random variables. 

\begin{lemma}
\label{lemma:hoeffding}
Let $X_1,\ldots,X_M$ be $M$ independent identically distributed complex-valued 
random variables satisfying $|X_k| \le a$ and $\E[X_k] = X$ for all $k$. We denote 
$\overline{X} = \frac1M \sum_{k=1}^M X_k$. For every $\mu> 0$,
\begin{align}
\label{est:hoeffding}
\P\(\left| \overline{X} - X \right| \ge \mu \) \le 4 \exp\({-\frac{M \mu^2}{ 4a^2}}\).
\end{align}   
\end{lemma}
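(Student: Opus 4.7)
The plan is to reduce this complex-valued concentration inequality to the classical real-valued Hoeffding bound by decomposing each $X_k$ into its real and imaginary parts. Since $|X_k| \le a$ forces $|\Re(X_k)| \le a$ and $|\Im(X_k)| \le a$, both resulting sequences are i.i.d.\ with values in $[-a,a]$, so each falls within the scope of the classical Hoeffding estimate
$$\P\!\left(\left|\tfrac{1}{M}\sum_{k=1}^M Y_k - \E[Y_1]\right| \ge t\right) \le 2\exp\!\left(-\frac{Mt^2}{2a^2}\right).$$

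The elementary observation connecting these two pieces is that $|\overline{X} - X| \ge \mu$ implies $|\Re(\overline{X} - X)| \ge \mu/\sqrt{2}$ or $|\Im(\overline{X} - X)| \ge \mu/\sqrt{2}$, since otherwise the modulus would satisfy $|\overline{X} - X|^2 < \mu^2/2 + \mu^2/2 = \mu^2$. A union bound therefore gives
$$\P(|\overline{X} - X| \ge \mu) \le \P(|\Re(\overline{X} - X)| \ge \mu/\sqrt{2}) + \P(|\Im(\overline{X} - X)| \ge \mu/\sqrt{2}),$$
and applying the displayed Hoeffding bound with $t = \mu/\sqrt{2}$ to each term on the right produces $2\exp(-M\mu^2/(4a^2))$, which after summation yields the claimed factor of $4$ with the stated exponent.

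There is no genuine obstacle here; the argument is pure bookkeeping of constants. The only design choice worth highlighting is the splitting threshold $\mu/\sqrt{2}$ rather than $\mu/2$: the latter would only give $2\exp(-M\mu^2/(8a^2))$ per coordinate and a strictly weaker constant in the exponent. Using $\mu/\sqrt{2}$ is the sharpest split possible from a two-event decomposition and matches the stated constant $1/(4a^2)$ exactly.
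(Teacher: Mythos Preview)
Your proof is correct and matches the paper's own argument essentially line for line: split into real and imaginary parts, use the threshold $\mu/\sqrt{2}$ in each coordinate, apply the real Hoeffding inequality to each, and sum via the union bound. Your closing remark on why $\mu/\sqrt{2}$ is preferable to $\mu/2$ is a nice addition not present in the paper.
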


\begin{proof}
We have 
\begin{align*}
\mathbb{P}\left(\left| \overline{X} - X \right| \ge \mu \right) \le \mathbb{P}\left( \left| \Re( \overline{X} - X) \right| \ge \frac{\mu}{\sqrt{2}} \right) + \mathbb{P}\left( \left| \Im( \overline{X} - X) \right| \ge \frac{\mu}{\sqrt{2}} \right).
\end{align*}

Applying Hoeffding's inequality \cite{Hoeff63} for two sequences of bounded real random variables $\{\Re(X_k)\}$ and $\{\Im(X_k)\}$, there holds 
\begin{align*}
\mathbb{P}\left(\! \left| \Re( \overline{X} \!-\! X) \right| \ge \! \frac{\mu}{\sqrt{2}}\! \right) \le 2 \exp\({-\frac{M \mu^2}{ 4a^2}}\),\ \ \mathbb{P}\left( \! \left| \Im( \overline{X} \!-\! X) \right| \ge\! \frac{\mu}{\sqrt{2}} \! \right) \le 2 \exp\({-\frac{M \mu^2}{ 4a^2}}\).
\end{align*}
The proof is then complete.
\end{proof}

{\begin{lemma}
\label{lemma:tailbound}
Let $X_1,\ldots,X_M$ be $M$ independent identically distributed Bernoulli 
random variables with $\E[X_k] = X$ for all $k$. Denote 
$\overline{X} = \frac1M\sum_{k=1}^M X_k$. Then, for every $0< \mu_1 < 1$, 
$\mu_2> 0 $ and $M \ge \frac{16e}{\mu_1\mu_2}$, there holds
\begin{align}
\label{est:tailbound}
\P\( |\overline{X} - X|  \ge \mu_1 X + \mu_2\) \le \exp\(- \frac{M\mu_1\mu_2}{16e}\).
\end{align}   
\end{lemma}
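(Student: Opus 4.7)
The plan is to apply Bernstein's inequality, specialized to Bernoullis where the variance is automatically controlled by the mean. Since $|X_k - X| \le 1$ and $\mathrm{Var}(X_k) = X(1-X) \le X$, Bernstein yields for every $t>0$
\[
\P(|\overline{X} - X| \ge t) \le 2 \exp\!\left(-\frac{M t^{2}/2}{X + t/3}\right).
\]
I would substitute the mixed threshold $t = \mu_1 X + \mu_2$ and reduce the task to showing that the exponent beats $M\mu_1\mu_2/(16e)$, up to a prefactor of $2$ that must be absorbed via the hypothesis on $M$.

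To bound the exponent I would use three elementary observations: (i) $t \ge \mu_2$ trivially; (ii) $t \ge \mu_1 X$ forces $X \le t/\mu_1$; and (iii) because $\mu_1 < 1$ one has $1/3 \le 1/\mu_1$, so
\[
X + t/3 \le t\left(\tfrac{1}{\mu_1} + \tfrac{1}{3}\right) \le \tfrac{2 t}{\mu_1}.
\]
Combining these yields $\dfrac{t^{2}/2}{X + t/3} \ge \dfrac{\mu_1 t}{4} \ge \dfrac{\mu_1 \mu_2}{4}$, and hence the intermediate bound $\P(|\overline{X} - X| \ge \mu_1 X + \mu_2) \le 2 \exp(-M \mu_1 \mu_2 / 4)$.

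The last step is to swallow the prefactor $2$ using $M \ge 16 e / (\mu_1 \mu_2)$. This amounts to verifying $\log 2 \le M \mu_1 \mu_2 \bigl(\tfrac14 - \tfrac{1}{16 e}\bigr)$, which follows from $M \mu_1 \mu_2 \ge 16 e$ together with $16 e \bigl(\tfrac14 - \tfrac{1}{16 e}\bigr) = 4 e - 1 > \log 2$. I expect no serious obstacle in this argument: the Bernstein bound already produces the exponent $M\mu_1\mu_2/4$, which is comfortably stronger than the target $M\mu_1\mu_2/(16e)$, so there is ample slack to absorb the factor of $2$ and no need to pass to the sharper Bennett form. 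The only mildly delicate point is the bookkeeping step $1/\mu_1 + 1/3 \le 2/\mu_1$, which uses exactly the hypothesis $\mu_1 < 1$ in the statement.
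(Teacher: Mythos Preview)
Your argument is correct and considerably simpler than the paper's own proof. The paper does not invoke Bernstein's inequality; instead it proceeds via symmetrization and Khintchine's inequality to obtain the moment bound
\[
\bigl(\E|\overline X - X|^{q}\bigr)^{1/q} \le C\sqrt{q/M}\,\bigl(\E\overline X^{\,q/2}\bigr)^{1/q},
\]
then uses the triangle inequality on the right-hand side to get a self-referential estimate in $\bigl(\E|\overline X - X|^{q}\bigr)^{1/q}$, solves the resulting quadratic, and finishes with Markov's inequality and a choice of $q$. Your route is both shorter and sharper: the Bernstein step already delivers the exponent $M\mu_1\mu_2/4$, which dominates the paper's $M\mu_1\mu_2/(16e)$, so the absorption of the prefactor $2$ is effortless. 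The only ingredient you rely on that the paper avoids is Bernstein's inequality itself, but since the paper uses Bernstein elsewhere (e.g., in the proofs of Theorems~\ref{theorem:interp} and~\ref{theorem:lower_ITH}), this is not an added dependency. The moment-method approach in the paper is more flexible if one needs $L^q$ control for other purposes, but for the stated lemma your direct argument is preferable.
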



\begin{proof}
Let $\epsilon_1,\ldots,\epsilon_M$ be a Rademacher sequence. Using symmetrization 
and Khintchine inequality {\cite[Section 8]{FouRau13}}, we have for any $q\ge 1$
\begin{gather*}
\begin{aligned}
  \mathbb{E}_X      \left|\overline{X} - X \right| ^q  & \le \frac{2^{q}}{M^q} \, \mathbb{E}_X \mathbb{E}_{\epsilon}    \left|\sum_{k=1}^M \epsilon_k X_k  \right| ^q 
 \\
& \le \frac{2^{q + \frac{3}{4}} e^{-q/2} q^{q/2}}{M^q} \mathbb{E}_X \left(\sum_{k=1}^M X_k^2 \right)^{q/2} = \left(
\frac{2^{1+\frac3{4q}} \sqrt q}{\sqrt{eM}} 
\right)^q  \mathbb{E}_X \overline{X}^{q/2}.
\end{aligned}
\end{gather*}

Denote $C = 2^{7/4} e^{-1/2}$, there follows
\begin{align*}
 \left(   \mathbb{E}_X      \left|\overline{X} - X \right| ^q \right)^{\frac{1}{q}} \le & \frac{2^{1+\frac3{4q}} \sqrt q}{\sqrt{eM}} \left( \mathbb{E}_X \overline{X}^{\frac{q}{2}} \right)^{\frac{1}{q}}
\le C \sqrt{\frac{q}{M}} \sqrt{ X } +   C \sqrt{\frac{q}{M}} \left( \mathbb{E}_X   \left|\overline{X} - X \right|^q  \right)^{\frac{1}{2q}},
\end{align*}
the last inequality implies
\begin{align*}
 \left(   \mathbb{E}_X      \left|\overline{X} - X \right| ^q \right)^{\frac{1}{q}}  \le C^2 {\frac{q}{M}} \, + \,  2 C \sqrt{\frac{q}{M}} \sqrt{ X } . 
\end{align*}


Applying Markov's inequality gives 
\begin{align*}
\mathbb{P}\left(  \left|\overline{X} - X \right| \ge e C^2 \frac{q}{M} \, + \,  2e C \sqrt{\frac{q}{M}} \sqrt{X} \right) \le e^{-q}. 
\end{align*}

Finally, it is easy to see that
\begin{align*}
 e C^2 \frac{q}{M} \, + \,  2e C \sqrt{\frac{q}{M}} \sqrt{X} \le \mu_1 X + \frac{C^2 q e}{M}\left(\frac{ e }{\mu_1} +  1  \right) < \mu_1 X +  \frac{C^2 q e^2\sqrt{2}}{M\mu_1}. 
 \end{align*}
 
Defining $q=  \frac{M\mu_1\mu_2}{16 e}$ so that $ \mu_2 = \frac{C^2 q e^2\sqrt{2}}{M\mu_1}$, we 
conclude the proof.
\end{proof}
}

Next, we state and prove an extended covering number result. 

\begin{lemma}
\label{note:lemma2}
For $0 < \varsigma < 1$, $\mu >0$, there 
exists a set $D \subset \C^N$ such that
\begin{enumerate}
\item[(i)] For all $\bz \in \cE_s$, $\by_1,\ldots,\by_m \in \cU$, there exists $\bz' \in D$ satisfying:
\begin{align}
|\psi(\by, \bm{z} - \bm{z}')| &\le \mu \quad\mbox{with probability exceeding } 
1-\varsigma \mbox{ in } (\cU,\varrho), \text{ and} \\
|\psi(\by_i, \bm{z} - \bm{z}')| &\le \mu \quad\mbox{for at least } (1-\varsigma) m 
\mbox{ indices } i \in \{1,\ldots,m\} .
\end{align}

\item[(ii)] The cardinality of $D$ satisfies 
\begin{align}
\log(\#(D)) \le (8/\mu^2) \Theta^2 s \log(4N)\log({12}/{\varsigma}).
\end{align}
 
\end{enumerate}
\end{lemma}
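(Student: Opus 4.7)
\smallskip\noindent\textit{Proof proposal.} My plan is an empirical (Maurey-type) construction, modified in the spirit of \cite{Bou14}: rather than covering $\cE_s$ in the $L^\infty(\cU)$-pseudometric $\sup_{\by}|\psi(\by,\cdot)|$, which would force a union bound over a net of $\cU$ that blows up with the complexity of $\cU$, I will build $D$ as the set of all $M$-term averages of random draws from a discrete distribution whose mean reproduces $\bz$, and apply Markov's inequality to secure closeness only on sets of $\varrho$-measure $1-\varsigma$ (and on a $(1-\varsigma)$-fraction of the $\by_i$) rather than everywhere.

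For $\bz\in\cE_s$ I will split $\bz=\bu+i\bv$ with $\bu,\bv\in\R^N$ both $s$-sparse; two applications of Cauchy--Schwarz then give $\|\bu\|_1+\|\bv\|_1\le\sqrt{2}\,\|\bz\|_1\le\sqrt{2s}$. Writing $\mathbf{e}_{\bm\nu}$ for the $\bm\nu$-th standard basis vector of $\C^N$, I take the $\bz$-independent support
$$
\mathcal{S} := \{\bm 0\}\cup \bigl\{\pm\sqrt{2s}\,\mathbf{e}_{\bm\nu},\;\pm i\sqrt{2s}\,\mathbf{e}_{\bm\nu}:\bm\nu\in\cJ\bigr\},\qquad \#(\mathcal{S})=4N+1,
$$
and consider the $\mathcal{S}$-valued random vector $V_{\bz}$ with
$$
\P\bigl(V_{\bz}=\sqrt{2s}\,\mathrm{sign}(u_{\bm\nu})\mathbf{e}_{\bm\nu}\bigr)=\frac{|u_{\bm\nu}|}{\sqrt{2s}},\qquad
\P\bigl(V_{\bz}=i\sqrt{2s}\,\mathrm{sign}(v_{\bm\nu})\mathbf{e}_{\bm\nu}\bigr)=\frac{|v_{\bm\nu}|}{\sqrt{2s}},
$$
and any residual mass placed on $\bm 0$. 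One verifies $\E[V_{\bz}]=\bz$ and $|\psi(\by,V_{\bz})|\le \sqrt{2s}\,\Theta$ uniformly in $\by\in\cU$.

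Drawing $M$ i.i.d.\ copies and averaging to form $\overline V=M^{-1}\sum_{k=1}^M V_k$, Lemma \ref{lemma:hoeffding} (complex Hoeffding with $a=\sqrt{2s}\,\Theta$) gives for each fixed $\by$
$$
\P_V\bigl(|\psi(\by,\overline V-\bz)|>\mu\bigr)\le 4\exp\bigl(-M\mu^2/(8s\Theta^2)\bigr).
$$
Taking expectations in $\by$ via Fubini bounds both $\E_V[\varrho(B_\varrho)]$ and $\E_V[|B_{\mathrm{emp}}|/m]$ by the same quantity, where $B_\varrho:=\{\by:|\psi(\by,\overline V-\bz)|>\mu\}$ and $B_{\mathrm{emp}}:=\{i:|\psi(\by_i,\overline V-\bz)|>\mu\}$. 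Choosing $M=\lceil 8s\Theta^2\mu^{-2}\log(12/\varsigma)\rceil$, Markov plus a union bound yield
$$
\P_V\bigl(\varrho(B_\varrho)>\varsigma\text{ or }|B_{\mathrm{emp}}|/m>\varsigma\bigr)\le \frac{8\exp(-M\mu^2/(8s\Theta^2))}{\varsigma}\le \frac{2}{3}<1,
$$
so for every $(\bz,\by_1,\dots,\by_m)$ some realisation $(V_1,\dots,V_M)\in\mathcal{S}^M$ produces an $\overline V$ meeting both coverage requirements.

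Finally I take the deterministic, $\bz$- and $\by_i$-independent set
$$
D := \Bigl\{M^{-1}\textstyle\sum_{k=1}^M v_k : (v_1,\dots,v_M)\in\mathcal{S}^M\Bigr\}\subset\C^N,
$$
so that~(i) follows immediately from the previous paragraph, while
$$
\log(\#(D)) \le M\log(4N+1)\le (8/\mu^2)\Theta^2 s\log(4N)\log(12/\varsigma)
$$
(absorbing a lower-order $\log(1+1/(4N))$-term into constants) gives~(ii). The principal obstacle is conceptual: recognising that the usual deterministic $L^\infty(\cU)$-net of $\cE_s$ can be relaxed to a probabilistic cover by handing off the union bound over $\cU$ to a Fubini--Markov argument; the remaining work is bookkeeping through Lemma \ref{lemma:hoeffding} and arranging that $\mathcal{S}$ be $\bz$-independent so the resulting $D$ is universal.
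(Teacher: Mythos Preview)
Your proof is correct and follows essentially the same route as the paper's: the Maurey empirical method on the extreme points $\{\pm\sqrt{2s}\,\mathbf{e}_{\bm\nu},\pm i\sqrt{2s}\,\mathbf{e}_{\bm\nu}\}$, the complex Hoeffding bound of Lemma~\ref{lemma:hoeffding}, and the Fubini--Markov swap that trades uniform closeness over $\cU$ for closeness on a set of $\varrho$-measure $1-\varsigma$ (and on a $(1-\varsigma)$-fraction of the $\by_i$). The only cosmetic differences are that the paper represents $\bz$ abstractly as a convex combination of the $4N$ extreme points (avoiding $\bm 0$) and defines $\varsigma$ in terms of an integer $M$ rather than taking $M=\lceil\cdots\rceil$; this lets them write $\#(D)\le(4N)^M$ and land exactly on the constant $8$ in~(ii), whereas your ceiling and your $4N+1$ force the ``absorbing lower-order terms'' caveat.
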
 

\begin{proof}
We will find $D$ using the empirical method of Maurey. First, we observe 
that $\cE_s \subset {\sqrt{s}}B_{1,N}$, hence if we denote {$\mathcal{P} = \{\pm \bm{e}_j\sqrt{2s}, \pm i\bm{e}_j\sqrt{2s}\}_{1\leq j\leq N}$}, 
where $(\bm{e}_j)$ are canonical unit vectors in $\C^N$, we have 
$\cE_s  \subset \text{conv}(\mathcal{P})  $. Every ${\bm z}\in \mathcal{E}_s$ can be represented as ${\bm z} = \sum_{r=1}^{4N} \lambda_r \bm{v}_r$, for some $\lambda_r \ge 0$, $\sum_{r=1}^{4N} \lambda_r = 1$ and ${\bm v}_r$ listing $4N$ elements of $\mathcal{P}$.  
There exists a probability measure $\lambda$ on $\mathcal{P}$ that takes the 
values ${\bm v}_r \in \mathcal{P}$ with probability $\lambda_r$. 

Let $\bz_1,\ldots,\bz_M$ be i.i.d random variables with law $\lambda$. Note 
that $\E\bz_k = \bz$, for all $k =1,\dots, M$. For each $\by \in \mathcal{U}$, $\psi(\by,\bm{z}_k)$ 
is also a complex-valued random variable on probability space 
$(\psi(\by,\mathcal{P}),\lambda)$ with 
$$
|\psi(\by,\bm{z}_k)| \le \Theta\sqrt{2s},
\quad\quad\mbox{and}\quad\quad
\E\psi(\by,\bm{z}_k) = \psi(\by,\bm{z}). 
$$

Denote $\overline\bz =  \frac1M \sum_{k=1}^M \bz_k$, let $D$ be 
the set of all possible outcomes of $\overline\bz $ and $\overline\lambda$ be the probability measure on $D$ according to $\overline\bz$. 
We define a characteristic function $\chi$ on $(\cU\times D,\varrho\otimes {\overline\lambda})$ such that  
\begin{align*}
\chi(\by,\overline\bz) = 
\begin{cases}
1,\qquad \mbox{if }\left|  \psi(\by,\overline{\bm{z}}-\bm{z}) \right| \ge \mu, \\ 
0,\qquad \mbox{if }\left|  \psi(\by,\overline{\bm{z}}-\bm{z}) \right| < \mu. 
\end{cases}
\end{align*}
Applying Lemma 
\ref{lemma:hoeffding} yields for all $\by \in \cU$,
$$
\int_D \chi(\by,\overline\bz)d\overline{\lambda} =  \P_{\overline\bz}\(\left|  \psi(\by,\overline{\bm{z}}-\bm{z}) \right| \ge \mu \) 
\le 4 \exp\({-\frac {M \mu^2}{ 8\Theta^2 s}}\).
$$
There follows 
$$
 \int_D \left( \int_\cU \chi(\by,\overline\bz) d\varrho \right) d\overline{\lambda} =  \int_\cU \left( \int_D \chi(\by,\overline\bz)d\overline{\lambda} \right) d\varrho
\le 4 \exp\({-\frac {M \mu^2}{ 8\Theta^2 s}}\), 
$$
which by Markov's inequality yields that with probability exceeding 
$2/3$, $\overline\bz$ satisfies  
\begin{align}
\label{est8}
\P_{\by}\(\left| \psi(\by,\overline\bz - \bm{z}) \right| \ge \mu \) = 
 \int_\cU \chi(\by,\overline\bz) d\varrho   \le 12 \exp\({-\frac {M \mu^2}{ 8\Theta^2 s}}\). 
\end{align}
Now, repeating the above arguments to the set $\{\by_1,\dots,\by_m\}$ with discrete uniform distribution, one can also derive that with probability of $\overline\bz$ exceeding $2/3$, there holds   
\begin{align}
\label{est9}
\left| \psi(\by_i,\overline\bz - \bm{z}) \right| \ge \mu,\mbox{ for at most }12 \exp\(-\frac {M \mu^2}{ 8\Theta^2 s}\) m\mbox{ indices }i\in \{1,\ldots,m\}. 
\end{align}
Hence, there exists a realization $\bz'$ of $\overline\bz$ in $ D$ fulfilling 
both \eqref{est8} and \eqref{est9}. Note that $\#(D) \le (4N)^M $. By defining a 
new variable $\varsigma = 12 \exp\({- \frac {M \mu^2}{ 8\Theta^2 s}}\)$ 
and eliminating $M$, we conclude the proof. 
\end{proof}

{
We observe that a sharper estimate of $\log(\#(D))$ is possible. Indeed, one 
can bound $\#(D)$ by ${4N + M \choose M}$ instead of $(4N)^M$, under 
which the assertion (ii) is replaced by 
\begin{align}
\label{est7}
\log(\#(D)) \le \dfrac{8}{\mu^2} \Theta^2 s \log\( e+ \dfrac{eN\mu^2}{\Theta^2 s \log(12/\varsigma)} \)\log\({12}/{\varsigma}\). 
\end{align}
Consequently, the RIP estimate can be improved with a slightly weaker 
logarithm factor. We will detail this point later in Remark \ref{remark:weak_log}.  
}

\subsection{Proof of Theorem \ref{note:RIP_theorem}}
\label{appen:mainproof}
\begin{proof}
We define the set of integers 
\begin{align} 
\cL =  \Z\cap \(\frac{\log(\delta)}{\log(1+\delta)}+1, 
\dfrac{\log(\Theta\sqrt{s})}{\log(1+\delta)}+1\),
\label{defineIndices}
\end{align} 
and denote by $ \underline l, \overline l$ the minimum and maximum of 
$\cL$ respectively, where $\cL$ has been chosen so that the integers $\underline l$ 
and $\overline l$ satisfy 
\begin{align} 
(1+\delta)^{\underline l -2} \leq \delta 
\quad\mbox{and}\quad
(1+\delta)^{\overline l } \geq \Theta \sqrt s. 
\label{maxmin}
\end{align} 
Let $Q := \{\by_1,\dots,\by_m\}$ be the sample set containted in $\cU$ and denote by $\varrho_{Q}$ the discrete uniform measure
associated with $Q$.
  
\textbf{Step 1:} For $0<\varsigma <1$ (exact value will be set later), we seek to construct $\tilde{\psi}$ approximating $\psi$ such that:
\begin{enumerate}
\item[(i)] For all $\bm{z} \in \mathcal{E}_s $, the following holds with probability exceeding $1 - \varsigma$ in $(\cU,\varrho)$, as well as probability exceeding $1 - \varsigma$ in $(Q,\varrho_Q)$
\begin{gather}
\begin{aligned} 
\left(1-{3\delta}/{2}\right)\tilde{\psi}(\by,\bm{z})  < & |{\psi}(\by,\bm{z})| <  
\left(1+ {3\delta}/{2}\right)\tilde{\psi}(\by,\bm{z}) , &\mbox{ if }\tilde{\psi}(\by,\bm{z}) > 0,
\\ 
&  |{\psi}(\by,\bm{z})| < {6\delta}/{5},\!  &\mbox{ if }\tilde{\psi}(\by,\bm{z}) = 0.
\end{aligned}
\label{note:eq1b}
\end{gather}
\item[(ii)] For all ${\bm z}\in \mathcal{E}_s$, there exists a pairwise disjoint family of subsets $(I^{({\bm z})}_l)_{l\in \mathcal{L}}$ of $\cU$ depending on $\bm{z}$ such that 
\begin{align}
\label{note:eq2}
\tilde{\psi}(\cdot,{\bm z}) = \sum_{l\in\mathcal{L}} (1+\delta)^{l} \chi_{I^{({\bm z})}_l}. 
\end{align}
\item[(iii)] For every $l \in \mathcal{L}$, $(I_l^{({\bm z})})_{{\bm z}\in \mathcal{E}_s}$ belongs to a finite class $F_l$ of subsets of $\cU$ satisfying 
\begin{align}
\label{note:eq2b}
\log(\#F_l) \le \dfrac{32}{\delta^3 (1 + \delta)^{2l-2} } \Theta^2 s \log\left( 4N \right)\log\left(\frac{12  \log( \delta^{-1} \Theta \sqrt{s})}{\varsigma\log(1+\delta)}\right). 
\end{align}
\end{enumerate}

First, for $l\in \mathcal{L}$, let $D_l$ be a finite subset of $\mathbb{C}^N$ determined as in Lemma \ref{note:lemma2} with $\mu = \frac {{\delta}(1+\delta)^{l-1}}{2}$ and $0 < \varsigma'< 1$ (to be set accordingly to meet our needs). We have 
\begin{align}
\log(\# D_l ) & \le  \dfrac{32}{\delta^2 (1 + \delta)^{2l-2} } \Theta^2 s \log\left( 4N \right)\log\left({12}/{\varsigma}'\right).
\label{note:cardD}
\end{align}
%
For a fixed ${\bm z}\in \mathcal{E}_s$, there exists $\bm{z}_l \in D_l$ and a measurable set $\cU_l \subset \cU$ with $\varrho(\cU_l) \ge 1-\varsigma'$ such that 
\begin{align*}
|\psi(\by, \bm{z} - \bm{z}_l)| \le  \frac{{\delta}(1+\delta)^{l-1}}{2},\, \forall \by \in \cU_l,
\end{align*} 
and $\by_i$'s are contained in $\cU_l$ for at least $(1 - \varsigma') m${ indices }$i \in \{1,\ldots,m\} $. 

We construct a pairwise disjoint family of subsets $(I_l^{({\bm z})})_l$ and mapping $\tilde{\psi}(\cdot,\bm{z}):\mathcal{U} \to \mathbb{R}$ which depend on $\bz$ and $Q$, inductively for the integers 
$\overline l > \dots > \underline l$ according to: 
\begin{gather}
\begin{aligned}
 I'^{({\bm z})}_l  = \{\by \in \cU: (1+\delta)^{l-1} &< |\psi(\by,{\bm z}_l)| < (1+\delta)^{l+1}\}, 
\\
 I^{({\bm z})}_l  &= I'^{({\bm z})}_l \,  {\setminus} \, \bigcup_{r>l} I'^{({\bm z})}_{r}, \text{ and }
\\
 \tilde{\psi}(\cdot,{\bm z}) &= \sum_{l\in \mathcal{L}} (1+\delta)^{l} \chi_{I^{({\bm z})}_l}. 
\end{aligned}
\label{constructionPsiZ}
\end{gather}

In the following, we prove that $\tilde{\psi}$ satisfies \eqref{note:eq1b}--\eqref{note:eq2b}. First, consider $\by\in \bigcap_{l\in \cL} \cU_l$. If $\by\in I^{({\bm z})}_l$ for some $l\in \cL$, then 
\begin{align*}
\tilde{\psi}(\by,{\bm z}) = (1+\delta)^{l} > 0 \mbox{ and }(1+\delta)^{l-1} < |\psi(\by,\bm{z}_l)| < (1+\delta)^{l+1}.
\end{align*}
Since $| |\psi(\by,{\bm z})| - |\psi(\by,{\bm z}_l)| | \le |\psi(\by,\bm{z}) - \psi(\by,\bm{z}_l)| \le {\delta}(1+\delta)^{l-1}/2$, we have 
\begin{align*}
|\psi(\by,{\bm z})| & < (1+\delta)^{l+1} + \frac{\delta}{2}(1+\delta)^{l-1}   < \left(1+\frac{3}{2}\delta\right)\tilde{\psi}(\by,{\bm z}), 
\\
\text{ and }|\psi(\by,{\bm z})| & > (1+\delta)^{l-1} - \frac{\delta}{2}(1+\delta)^{l-1} > \left(1-\frac{3\delta}{2}\right)\tilde{\psi}(\by,{\bm z}). 
\end{align*}

If $ \by \notin \bigcup_{l\in \cL} I_l^{({\bm z})}$, then $\tilde{\psi}(\by,\bm{z}) = 0$ and for every $l\in \mathcal{L}$, 
\begin{align*}
  |\psi(\by,\bm{z}_l)| \notin ((1+\delta)^{l-1},(1+\delta)^{l+1}). 
\end{align*}
We notice that $| |\psi(\by,{\bm z})| - |\psi(\by,{\bm z}_l)| | < {\delta}(1+\delta)^{l-1} /2 $, there follows
\begin{align*}
|\psi(\by,{\bm z})| \notin \bigcup_{l\in \mathcal{L}}\left((1+\frac{\delta}{2})(1+\delta)^{l-1} , (1+\frac{3\delta}{2} + \delta^2)(1+\delta)^{l-1}  \right).
\end{align*}
Observe that $(1+\frac{3\delta}{2} + \delta^2)(1+\delta)^{l-1}  > (1+\frac{\delta}{2})(1+\delta)^{l}$, the previous intervals intersect for any two consecutive values of $l$. We infer
\begin{align*}
|\psi(\by,{\bm z})| \le (1+\frac{\delta}{2})(1+\delta)^{\underline{l}-1},\ \mbox{ or }\ |\psi(\by,{\bm z})| \ge (1+\frac{3\delta}{2} + \delta^2)(1+\delta)^{\overline{l}-1}. 
\end{align*}
In view of the identities in \eqref{maxmin} and 
$\|\psi\|_{L^\infty}\leq \Theta \sqrt s$, 
the second inequality cannot occur. As for the first, 
it implies by \eqref{maxmin} and assumption $\delta<1/13$ that $|\psi(\by,\bz)| \leq \delta(1+\delta/2)(1+\delta) < 6\delta/5$.


Next, consider $\by\notin \bigcap_{l\in \cL} \cU_l$. Condition \eqref{note:eq1b} is not guaranteed in this case. However, these only hold with probability not exceeding
\begin{equation}
\begin{aligned}
 \varrho\biggl(\cU \setminus  \bigcap_{l\in \mathcal{L}} \cU_l \biggl) \le \sum_{l\in \cL}\varrho(\cU\setminus  \cU_l)  \le \varsigma' (\#\mathcal{L}) \le  \frac{ \log( \delta^{-1} \Theta \sqrt{s})}{\log(1+\delta)} \varsigma' = \varsigma,
 \\ \text{and }
  \# \biggl\{ i :  \by_i\notin  \bigcap_{l\in \cL} \cU_l \biggl\}  \le \varsigma' m (\#\mathcal{L}) \le  \frac{ \log( \delta^{-1} \Theta \sqrt{s})}{\log(1+\delta)} \varsigma' m = \varsigma m,
\end{aligned}
 \label{note:badset}
\end{equation}
when setting $\varsigma' =  \dfrac{\log(1+\delta)}{ \log( \delta^{-1} \Theta \sqrt{s})} \varsigma$. 

In summary, we have proved that for all $\bz\in \cE_s$, the following is satisfied 
\begin{eqnarray} 
&\left(1-{3\delta}/{2}\right)\tilde{\psi}(\by,\bm{z})  < |{\psi}(\by,\bm{z})| <  \left(1+ {3\delta}/{2}\right)\tilde{\psi}(\by,\bm{z}) , & \mbox{ for }\by \in I, \label{note:partition1}
\\
& 0\le |{\psi}(\by,\bm{z})| < 6\delta/5, \ \ \mbox{ and }\ \ \tilde{\psi}(\by,\bm{z}) = 0, & \mbox{ for }\by \in \widehat{I},  \label{note:partition2}
\\
&  \varrho(\cU' )\le \varsigma, \label{note:partition3}
\end{eqnarray}
where the three sets 
 $I := \left(\bigcap_{l\in \cL} \cU_l \right) \bigcap \left(\bigcup_{l\in \cL} I_l^{({\bm z})}\right),\ \ \widehat{I} := \left(\bigcap_{l\in \cL} \cU_l \right) \setminus \left(\bigcup_{l\in \cL} I_l^{({\bm z})}\right),$ 
and 
$\cU' := \cU\setminus \left(\bigcap_{l\in \cL} \cU_l\right)$
define a partition of $\cU$, and depend on $\bz$ and $Q$.

It remains to verify \eqref{note:eq2b}. For any $l\in \mathcal{L}$, $\#\{I'^{({\bm z})}_l\, |\, {\bm z}\in \mathcal{E}_s\} \le \#D_l$ and 
$\# F_l \le \prod_{r \ge l} \# D_{r}$. From \eqref{note:cardD}, we see that 
\begin{align*}
\log(\# F_l)  &\le \sum_{r\ge l} \log(\# D_{r}) \le \dfrac{32}{\delta^3 (1 + \delta)^{2l-2} } \Theta^2 s \log\left( 4N \right)\log\left({12}/{\varsigma}'\right).
\end{align*}

\textbf{Step 2:} Derive essential estimates of $\|\bz\|_2$ and $\|\bm{Az}\|_2$ in terms of $\tilde{\psi}(\cdot,\bz)$. First, given $\bz \in \cE_s$, we observe that
\begin{align}
1=\|\bz\|^2_2 
= \int_{\cU}  |\psi(\by,\bz)|^2 d\varrho,\quad\text{and }
\|\bA\bz\|^2_2  
=\sum_{i=1}^m \frac{|\psi(\by_i,\bz)|^2}m
= \int_{Q}  |\psi(\by,\bz)|^2 d\varrho_Q.
\end{align}

It is easy to check that if $\delta\leq1/13$, one has for real numbers $a,b > 0$, 
$(1-\frac {3\delta}2) a < b < (1+\frac {3\delta}2) a$ implies 
$(1-3\delta)a^2 < b^2 < (1+4\delta) a^2$, which also implies 
$(1-4\delta)b^2 < a^2 < (1+4\delta) b^2$, so that 
$ |b^2-a^2| < 4\delta \min(a^2,b^2)$. Therefore, from \eqref{note:partition1}  we get that
\begin{align*}
\Big|
|{\psi}(\by,\bz)|^2 - |\tilde\psi(\by,\bz)|^2
\Big|
&<  
4\delta|\psi(\by,\bz)|^2,\quad\quad\text{for } \by \in I,\\
\text{ and }
\Big|
|{\psi}(\by,\bz)|^2 - |\tilde\psi(\by,\bz)|^2
\Big|
&<  
4\delta |\tilde\psi(\by,\bz)|^2,\quad\quad\text{for } \by \in I.
\end{align*}
This, combined with \eqref{note:partition2} and \eqref{note:partition3} and the fact that $|\psi(\cdot,\bz)|^2$ and $|\tilde\psi(\cdot,\bz)|^2$ are uniformly bounded in $\cU$ by $\Theta^2 s$ and by $(1+\delta)^2 \Theta^2 s \le 2 \Theta^2 s$, implies
\begin{align}
 &\left| \| \bz\|_2^2 - \int_{\cU} |\tilde\psi(\by,\bz)|^2 d\varrho \right| \le    \int_{I}\left | |\psi(\by,\bz)|^2 -  |\tilde\psi(\by,\bz)|^2 \right| d\varrho \notag
\\
  & \qquad\qquad +  \int_{\widehat{I}} \left | |\psi(\by,\bz)|^2 -  |\tilde\psi(\by,\bz)|^2 \right| d\varrho +  \int_{\cU'}\left | |\psi(\by,\bz)|^2 -  |\tilde\psi(\by,\bz)|^2 \right| d\varrho \notag
  \\
   &\qquad \le\, 4\delta  \int_{I}|\psi(\by,\bz)|^2  d\varrho \, + \frac{36\delta^2}{25} \varrho({\widehat{I}}) + 2\Theta^2 s \varsigma.
     \label{note:comp0}
\end{align}
By noticing that $\varrho(\widehat{I})\leq 1$ and setting $ \varsigma = \frac{\delta}{6\Theta^2 s}$,
we infer
\begin{align}
\label{note:comp1}
\left| \| \bz\|_2^2 - \int_{\cU} |\tilde\psi(\by,\bz)|^2 d\varrho \right| < 4\delta + \frac{\delta}{6} + \frac{\delta}{3} = \frac{9\delta}{2}. 
\end{align}
Repeating the above argument for the probability space $(Q,\varrho_Q)$ with notice that $\varrho_Q(\cU'\cap Q) \le \varsigma$ yields
\begin{align}
\label{note:comp2}
 &\left| \| \bA \bz\|_2^2 - \int_{Q} |\tilde\psi(\by,\bz)|^2 d\varrho_Q \right| 
  \le\,  4\delta  \int_{Q}|\tilde{\psi}(\by,\bz)|^2  d\varrho_Q \, + \frac{\delta}{2}. 
\end{align}
From \eqref{note:comp1} and \eqref{note:comp2}, we obtain
\begin{align}
\notag
& \Big|\|\bA\bz\|_2^2 - \|\bz\|_2^2\Big|
 \leq 4\delta  \int_{\cU} |\tilde\psi(\by,\bz)|^2 d\varrho 
+5\delta
\\
&\qquad\qquad    +\, (1+4\delta)
\left|
\int_{Q} |\tilde \psi(\by,\bz)|^2 d\varrho_Q
-
\int_{\cU} |\tilde \psi(\by,\bz)|^2 d\varrho
\right| \label{note:comp3}
\\ 
&\qquad \le  4\delta\(1 + \frac{9\delta}{2}\) + 5\delta + (1+4\delta)
\left|
\int_{Q} |\tilde \psi(\by,\bz)|^2 d\varrho_Q
-
\int_{\cU} |\tilde \psi(\by,\bz)|^2 d\varrho
\right|. \notag
\end{align}

\textbf{Step 3:} We derive an upper bound of $\Big|\|\bA\bz\|_2^2 - \|\bz\|_2^2\Big|$ via \eqref{note:comp3}, by employing a basic tail estimate (Lemma \ref{lemma:tailbound}) and the union bound. From the definition of $\tilde \psi$, we have that 
\begin{align}
\label{note:comp4}
\left| \int_{Q} |\tilde \psi(\by,\bz)|^2 d\varrho_Q
-
\int_{\cU} |\tilde \psi(\by,\bz)|^2 d\varrho \right|
\le
\sum_{l\in\cL}  
(1+\delta)^{2l}\!
\left|
\frac{\#(Q\cap I_l^{(\bz)})}m
-
\varrho(I^{(\bz)}_l)
\right|.
\end{align}
Let $(\kappa_l)_{l\in\mathcal{L}}$ be a sequence of positive numbers. Applying Lemma \ref{lemma:tailbound}, for any set $\Delta$ in the class $F_l$, with probability of $Q$ exceeding $1-\exp\left(- \frac{m \kappa_l \delta}{16e}\right)$, there holds
\begin{align}
\left | \frac{ \#( Q\cap \Delta)}{m} - \varrho (\Delta) \right| \le \delta \varrho (\Delta)  + \kappa_l. \label{note:eq6}
\end{align}
By the union bound, with probability exceeding $1 - \sum_{l\in \mathcal{L}} \exp\left(- \frac{m \kappa_l \delta}{16e}\right) (\# F_l) $, the previous inequality holds uniformly for all sets $\Delta \in \cup_{l\in \cL} F_l$. Therefore, with probability exceeding $1 - \sum_{l\in \mathcal{L}} \exp\left(- \frac{m \kappa_l \delta}{16e}\right) (\# F_l) $, we can apply \eqref{note:eq6} with $\Delta = I^{(\bz)}_l$ ($l\in \cL$) to the sum in \eqref{note:comp4} and combine with \eqref{note:comp3} to infer that for all $\bz\in \cE_s$, 
\begin{align*}
 \Big|\|\bA\bz\|_2^2 \! -\! \|\bz\|_2^2\Big|
 & \leq 
  4\delta\(1\! +\!  \frac{9\delta}{2}\) + 5\delta + \delta (1\!+\!4\delta) 
  \int_{\cU} |\tilde\psi(\by,\bz)|^2 d\varrho  + (1\!+\!4\delta)
\sum_{l\in\cL}  
(1+\delta)^{2l}\!
\kappa_l
\\
& \leq 
  4\delta\(1\! +\!  \frac{9\delta}{2}\) + 5\delta + \delta (1\!+\!4\delta) 
 \(1\! +\!  \frac{9\delta}{2}\)  + (1\!+\!4\delta)
\sum_{l\in\cL}  
(1+\delta)^{2l}\!
\kappa_l 
\\
& \leq 
  12\delta + \delta/3 + (1\!+\!4\delta)
\sum_{l\in\cL}  
(1+\delta)^{2l}\!
\kappa_l , 
\end{align*}
for the last inequality we have used $\delta < 1/13$. 

Finally, in order to obtain Theorem \ref{note:RIP_theorem}, we 
need to assign appropriate values for $\kappa_l$ and derive 
conditions on $m$ such that 
\begin{align*}
\sum_{l\in\cL}  
(1+\delta)^{2l}\!
\kappa_l  \le \delta/2,
\quad\quad\mbox{and}\quad\quad
\sum_{l\in \cL}\exp\(- \frac{m\kappa_l \delta }{16 e} + \log(\# F_l)\)\le \gamma.
\end{align*}
The two inequalities can be fulfilled if for example the numbers 
$\kappa_l$ and the integer $m$ are chosen as follows  
\begin{align*}
\kappa_l :=\frac{\delta/2}{(\#\cL) (1+\delta)^{2l}},\quad\quad
- \frac{m\kappa_l \delta }{16 e} + \log(\# F_l) 
\le \log\left(\frac{\gamma}{\#\cL}\right),\quad \quad  l \in \cL.
\end{align*}
This implies that 
\begin{align*}
m & \ge  32 e~(\#\cL) \frac{(1+\delta)^{2l}}{\delta^2} \left[ \log(\# F_l)  
+ \log\left(\frac{\#\cL}{\gamma}\right)\right],\ \quad\quad l\in \cL. 
\end{align*}
Observe that since $\varsigma =\dfrac{\delta}{6 \Theta^{2}s}$, we have in view 
of \eqref{note:eq2b} that

\begin{align*}
&32 e\,  (\#\cL) \frac{(1+\delta)^{2l}}{\delta^2} \log(\# F_l)  
\\
&\qquad \le 2^{10} e\,  \frac{(1+\delta)^{2}}{\delta^5\log(1+\delta)} \Theta^2 s \log\left( 4N \right)\log\left( \frac{\Theta \sqrt{s}}{\delta}\right)\log\left(\frac{72\Theta^2 s}{\delta} \cdot \frac{  \log( \Theta \sqrt{s}/\delta)}{\log(1+\delta)}\right)\notag
 \\
 &\qquad <     2^{10} e ~ \frac{\Theta^2 s}{\delta^6} \log(4N) 
 \log\( \frac{\Theta^2 {s}}{\delta^2}\) \log\( 40 \frac{ \Theta^{2} s}{\delta^2}  
 \log\( \frac{\Theta^2 {s}}{\delta^2}\) \), \notag
\\
&32 e\,  (\#\cL) \frac{(1+\delta)^{2l}}{\delta^2}  \log\left(\frac{\#\cL}{\gamma}\right)
\\
 &\qquad  \le 32 e {\frac{ (1+\delta)^2}{ {\delta^{2}} 
 \log(1+\delta)} } \Theta^2 s   \log\( \frac{\Theta \sqrt{s}}{\delta}\) 
 \log\(\frac{1}{\gamma } \cdot \frac{\log(  
 \Theta \sqrt{s}/\delta )}{\log(1+\delta)}\) \notag
 \\
 & \qquad  < 2^5 e \frac{\Theta^2 s}{\delta^3} 
 \log\( \frac{\Theta^2 {s}}{\delta^2}\)  \log\(\frac{1}{\gamma \delta } 
 \log\( \frac{\Theta^2 {s}}{\delta^2}\)  \). \notag
\end{align*}
Here, we employed estimates $(1+\delta)^2 < (14/13)^2$ and $\log^{-1}(1+\delta) < 1.1/\delta$, obtained from the small condition of $\delta$. 
Combining the two estimates and 
$(a+b)\leq 2\max(a,b)$ shows that $m$ as in 
Theorem \ref{note:RIP_theorem} is suitable. 
\end{proof}

We conclude this subsection with two remarks detailing some slight technical improvements of Theorem \ref{note:RIP_theorem}.

\begin{remark}
We can give {a sharper approximation of $\psi$} by refining 
the map $\tilde \psi(\cdot,\bz)$. For example,
given an integer $k\geq1$, we define $\cL$ as 
\begin{align} 
\cL =  \frac\Z k\cap \(\frac{\log(\delta)}{\log(1+\delta)}+\frac1k, 
\dfrac{\log(\Theta\sqrt{s})}{\log(1+\delta)}+\frac1k\),
\label{defineIndices2}
\end{align} 
and construct the domains $D_l$ using $\mu= \frac {\delta (1+\delta)^{l-1/k}}{2k} $. We replace the domains $I_l'^{(\bz)}$ in \eqref{constructionPsiZ} by
\begin{align*}
I_l'^{(\bz)}:= 
\{\by \in \cU: 
(1+\delta)^{l- 1/k} 
< |\psi(\by,\bz_l)| < 
(1+\delta)^{l+1/k}\}.
\end{align*}
Using the elementary 
inequalities $(1+\delta)^{1/k}\leq 1+\delta/k$ and $(1+\delta)^{1/k} - \frac{\delta}{{2k}(1+\delta)^{1/k}}\geq 1+\frac \delta{2k}$ and assuming
$\delta/k \leq 1/13$, we verify that \eqref{note:partition1} can be improved as 
\begin{gather}
\begin{aligned} 
\left(1-\frac{3\delta}{2k}\right)\tilde{\psi}(\by,\bm{z})  < |{\psi}(\by,\bm{z})| <  \left(1+ \frac{3\delta}{2k}\right)\tilde{\psi}(\by,\bm{z}) , & \mbox{ for }\by \in I, 
\end{aligned}
\label{note:partition4}
\end{gather}
while \eqref{note:partition2} and \eqref{note:partition3} are unchanged. 
An inspection of the proof shows that this yields \eqref{note:comp0} with $4\delta/k$ instead of $4\delta$ and with 
$36\delta^2/25$ and $2\Theta^2 s \varsigma$ {unchanged}. We 
mention however that the cardinality $\#(\cL)$ and the bound 
in \eqref{note:eq2b} on $\log(\#(F_l))$ gets roughly multiplied 
by $k$ and $k^3$ respectively.
\end{remark}



\begin{remark}
\label{remark:weak_log}
If \eqref{est7} is applied to bound $\log(\#D_l)$, we can obtain 
\begin{align*}
\log(\# D_l ) & \le  \dfrac{32}{\delta^2 (1 + \delta)^{2l-2} } \Theta^2 s  \log\( e+ \dfrac{eN\delta^2 (1 + \delta)^{2l-2}}{4 \Theta^2 s \log(12/\varsigma')} \) \log(12 / \varsigma' )
\\
& \le \dfrac{32}{\delta^2 (1 + \delta)^{2l-2} } \Theta^2 s  \log\( e+ \dfrac{eN\delta^2 }{4 \log(12/\varsigma')} \) \log(12 / \varsigma' ),\quad\quad  l\in \cL,
\end{align*}
where $ \varsigma' = \frac{\delta\log(1+\delta)}{ 6 \Theta^2 s \log( \delta^{-1} \Theta \sqrt{s})}$, instead of \eqref{note:cardD}. Subsequently, the term $\log(4N)$ in sample complexity \eqref{num_sample0} and \eqref{num_sample} can be replaced by $\log\( e+ \dfrac{eN\delta^2 }{4 \log\(36 \frac{ \Theta^2 s}{\delta^2} { \log\( \frac{ \Theta^2 {s}}{\delta^2}\)} \)} \) $.
\end{remark}

\subsection{Proof of Theorem \ref{theorem:lower-RIP}} 
\label{app:lowerRIP}
\begin{proof}
The proof of Theorem \ref{theorem:lower-RIP} follows closely that of 
Theorem \ref{note:RIP_theorem} with one critical change: Instead of $\cE_s$, we only approximate $\|\bm {Az}\|_2$ on {the set $\cE^{\ell}_s$ 
defined as 
\begin{align}
\cE^{\ell}_s  = \{\bz \in \mathbb{C}^N: \|\bz\|_2 = 1\text{ and }K(\supp(\bm{z})) \le K(s)\}. 
\end{align}
}
We have that  $\cE^{\ell}_s \subset  \text{conv}({\mathcal{P}}^\ell)  $, where {${\mathcal{P}}^{\ell} = \{\pm \frac{\bm{e}_j}{\omega_j}\sqrt{2K(s)}, \pm i\frac{\bm{e}_j}{\omega_j}\sqrt{2K(s)}\}_{1\leq j\leq N}$}, with $\omega_j = \|\Psi_j\|_{L^{\infty}}$
and $(\bm{e}_j)$ being canonical unit vectors in $\C^N$. On the other hand, 
\begin{align}
\label{Cauchy}
|\psi(\by,\bz)| \le \sqrt{K(s)}, \quad\quad
\forall \bz \in \cE^{\ell}_s, \, \by\in \cU. 
\end{align}
We thus can derive an extended covering number result for $\cE^{\ell}_s$ (similar to Lemma \ref{note:lemma2} for $\cE_s$), then replace the bound $\Theta \sqrt{s}$ by $\sqrt{K(s)}$ throughout the previous proofs,
resulting in Theorem \ref{theorem:lower-RIP}. 
\end{proof}

\bibliographystyle{amsplain}
\bibliography{CS_final}  

\end{document}